\newtheorem{assumption}[theorem]{Assumption}
\newtheorem{rem}[theorem]{\hspace{1mm}Remark}
\DeclareMathAlphabet{\mathdutchcal}{U}{dutchcal}{m}{n}
\SetMathAlphabet{\mathdutchcal}{bold}{U}{dutchcal}{b}{n}
\DeclareMathAlphabet{\mathdutchbcal}{U}{dutchcal}{b}{n}
\DeclareFontFamily{T1}{calligra}{}
\DeclareFontShape{T1}{calligra}{m}{n}{<->s*[1.44]callig20}{}
\DeclareMathAlphabet\mathcalligra   {T1}{calligra} {m} {n}
\DeclareMathAlphabet\mathzapf       {T1}{pzc} {mb} {it}
\DeclareMathAlphabet\mathchorus     {T1}{qzc} {m} {n}
\DeclareMathAlphabet\mathrsfso      {U}{rsfso}{m}{n}
\tikzset{
        hatch distance/.store in=\hatchdistance,
        hatch distance=5pt,
        hatch thickness/.store in=\hatchthickness,
        hatch thickness=5pt
        }
\pgfqpoint{\hatchdistance}{\hatchdistance}}
\newcommand{\vertiii}[1]{{\left\vert\kern-0.25ex\left\vert\kern-0.25ex\left\vert #1 
    \right\vert\kern-0.25ex\right\vert\kern-0.25ex\right\vert}}
\newcommand{\cm}[1]{{\color{black}{#1}}}
\begin{document}
 

\headers{Multiscale GFEM for heterogeneous Helmholtz problems}{C. P. Ma, C. Alber, and R. Scheichl} 
 
\title{Wavenumber explicit convergence of a multiscale \cm{generalized finite element method} for heterogeneous Helmholtz problems}

\author{Chupeng Ma\thanks 
    {Institute of Scientific Research, Great Bay University, Songshan Lake International Innovation Entrepreneurship A5, Dongguan 523000, China (\email{chupeng.ma@gbu.edu.cn}).}    
 \and C. Alber\thanks{Institute for Applied Mathematics and Interdisciplinary Center for Scientific Computing, Heidelberg University, Im Neuenheimer Feld 205, Heidelberg 69120, Germany (\email{christianalber1997@gmail.com}, \email{r.scheichl@uni-heidelberg.de}).} \and R. Scheichl\footnotemark[2]
}
\maketitle

\begin{abstract}
In this paper, a generalized finite element method (GFEM) with optimal local approximation spaces for solving high-frequency heterogeneous Helmholtz problems is systematically studied. The local spaces are built from selected eigenvectors of carefully designed local eigenvalue problems defined on generalized harmonic spaces. At both continuous and discrete levels, $(i)$ wavenumber explicit and nearly exponential decay rates for local and global approximation errors are obtained without any assumption on the size of subdomains; $(ii)$ a quasi-optimal convergence of the method is established by assuming that the size of subdomains is $O(1/k)$ ($k$ is the wavenumber). A novel resonance effect between the wavenumber and the dimension of local spaces on the decay of error with respect to the oversampling size is implied by the analysis. Furthermore, for fixed dimensions of local spaces, the discrete local errors are proved to converge as $h\rightarrow 0$ ($h$ denoting the mesh size) towards the continuous local errors. The method at the continuous level extends the plane wave partition of unity method [I. Babuska and J. M. Melenk, Int.\;J.\;Numer.\;Methods Eng., 40 (1997), pp.~727--758] to the heterogeneous-coefficients case, and at the discrete level, it delivers an efficient non-iterative domain decomposition method for solving discrete Helmholtz problems resulting from standard FE discretizations. Numerical results are provided to confirm the theoretical analysis and to validate the proposed method.
\end{abstract}

\begin{keywords}
generalized finite element method, Helmholtz equation, multiscale method, Trefftz methods, local spectral spaces
\end{keywords}

\begin{AMS}
65M60, 65N15, 65N55
\end{AMS}

\section{Introduction}\label{sec-1}
The Helmholtz equation models wave propagation and scattering phenomena in the frequency domain, and arises in a variety of science and engineering applications, including seismic imaging, medical ultrasound technologies, and underwater acoustics. It is well known that due to the so called \textit{pollution effect}, solving the Helmholtz equation with low-order finite element methods (FEMs) needs a much higher mesh resolution than typically required for a meaningful representation of the solution in the finite element spaces. Other standard numerical methods also suffer from a similar problem. In the high frequency regime, such discretizations result in very large scale and strongly indefinite linear systems of equations which are difficult to solve by classical methods. Significant research efforts have been devoted to addressing these challenging problems, which mainly focus on two directions: \cm{high-order and non-classical discretization schemes}, and efficient methods for solving the resulting linear systems from standard low-order FE approximations.

\cm{With an aim to overcome the shortcoming of low-order Galerkin FEMs, various discretization schemes have been developed for the Helmholtz equation over the past decades. High-order discretization methods \cite{ainsworth2004discrete,ainsworth2009dispersive} have been shown to be effective in alleviating 
the pollution effect. In particular, for the $hp$-FEM \cite{melenk2010convergence,melenk2011wavenumber}, quasi optimality was proved under the conditions that the polynomial degree $p$ is at least $O(\log k)$ ($k$ denoting the wave number) and that $kh/p$ is sufficiently small ($h$ denoting the mesh size). Apart from high-order discretization methods, non-classical Galerkin methods have also been extensively studied.} One class of such methods are based on variational formulations different from classical Galerkin methods; see, e.g., \cite{chen2013hybridizable,feng2009discontinuous,moiola2014helmholtz,wu2014pre}. Another important class of non-classical FEMs are Trefftz-type methods which use (local) solutions of the Helmholtz equation as basis functions; see \cite{hiptmair2016survey} for a survey. A popular choice of Trefftz-type basis functions are plane waves and associated methods include the ultraweak variational formulation \cite{cessenat1998application}, the plane wave partition of unity method \cite{babuvska1997partition}, the plane wave discontinuous Galerkin method \cite{hiptmair2011plane}, the least-squares FEM \cite{monk1999least}, and the discontinuous enrichment method \cite{farhat2003discontinuous}, to cite a few. Due to the use of operator-adapted basis functions, Trefftz-type methods usually need much fewer degrees of freedom than conventional FEMs to achieve the same accuracy. 

However, all the aforementioned \cm{high-order and non-classical FEMs} are typically developed for \cm{homogeneous media}, and it is not straightforward to extend them to Helmholtz problems in heterogeneous media. \cm{Although recently similar results as in the pioneering works on $hp$-FEMs \cite{melenk2010convergence,melenk2011wavenumber} have also been established for (piecewise) smooth coefficients \cite{chaumont2020wavenumber,lafontaine2022wavenumber}, as far as we know, no theoretical justification exists for general heterogeneous Helmholtz problems, for instance, those with multiscale features.} For Trefftz-type methods, in general, local solutions of heterogeneous Helmholtz equations are not available. \cm{Although several attempts have been made to construct local approximate solutions \cite{imbert2014generalized,tezaur2014discontinuous}, they are typically limited to piecewise smooth coefficients.} In recent years, there has been an increasing interest among the numerical homogenization community in developing numerical multiscale methods for Helmholtz problems with or without heterogeneous coefficients, e.g., (generalized) multiscale FEMs \cite{chen2021exponentially,fu2021edge,fu2017fast,fu2021wavelet}, LOD type methods \cite{brown2017multiscale,hauck2021multi,peterseim2017eliminating,peterseim2020computational}, the heterogeneous multiscale method \cite{ohlberger2018new}, the multiscale asymptotic method \cite{cao2002multiscale}, and the multiscale hybrid-mixed method \cite{chaumont2020multiscale}. These multiscale methods are usually based on solving some local problems numerically to get basis functions that capture the wave characteristics and local media information. The associated analysis, including local approximation error estimates and the quasi-optimal convergence analysis, were typically performed under \cm{the scale resolution condition}, i.e., the mesh size of the coarse grid is $O(1/k)$, e.g., in \cite{chen2021exponentially,fu2021wavelet,peterseim2017eliminating}. In fact, under this condition, the indefiniteness of the local problems largely disappears and many analysis techniques developed for positive definite problems can be applied. For multiscale methods \cm{beyond this condition}, which is of more interest for practical applications, very little analysis of the error and of the effect of the wavenumber is available in the literature.

Although using high-order or non-classical FEMs for the Helmholtz equation can yield a dramatic improvement in efficiency, due to their simplicity, low-order classical FEMs are still widely used. In this scenario, the efficient solution of the resulting large linear systems becomes a focus of research. For discrete Helmholtz problems of very large size, direct methods are in general prohibitively expensive, and classical iterative methods suffer from the problem of slow convergence \cite{ernst2012difficult}. Over the past two decades, robust preconditioning of the Helmholtz equation has been extensively studied and many novel preconditioners have been proposed; see \cite{gander2019class} for a review. Here we put a special emphasis on domain decomposition methods (DDMs), as they are a natural choice for use on parallel computers. Simple extensions of state-of-the-art techniques for symmetric positive definite problems to indefinite and non-self-adjoint problems have been shown to be inefficient \cite{ernst2012difficult}. To obtain an efficient domain decomposition preconditioner for the Helmholtz equation, two key ingredients are needed: transmission conditions and a coarse space, suitably adapted to the characteristics of the problem. We particularly highlight the DtN and GenEO spectral coarse spaces \cite{bootland2021comparison}, constructed by selected modes of local eigenvalue problems adapted to the Helmholtz equation.  Apart form practical difficulties, few domain decomposition preconditioners (and also other preconditioners) for the Helmholtz equation are amenable to rigorous analysis due to the indefinite and non-Hermitian nature of the underlying problem.

In this paper, we consider the numerical solution of high-frequency heterogeneous Helmholtz problems and deal with the two above focused topics, nonstandard FEM discretizations and efficient solvers, within the unified framework of the Multiscale Spectral Generalized Finite Element Method (MS-GFEM) \cite{babuska2011optimal,babuvska2020multiscale,ma2021novel,ma2021error}. Built on the GFEM \cite{melenk1995generalized} which constructs the trial space by gluing local approximation spaces together with a partition of unity, the MS-GFEM achieves a high efficiency by building optimal local approximation spaces from selected eigenvectors of carefully designed local eigenvalue problems. At the continuous level, the local eigenproblems are defined on generalized harmonic spaces that consist of local solutions to the governing equation with vanishing source term, and thus the method is an extension of the plane wave partition of unity method \cite{babuvska1997partition} to the heterogeneous coefficient case. Wavenumber-explicit and nearly exponential decay rates for the local approximation errors are derived without the scale resolution condition, and a nearly exponential error decay for the global approximation is obtained under some general assumptions on the stability of local Helmholtz problems. The local approximation errors are a posteriori computable from the local spectra. Our analysis implies the presence of a \textit{resonance} effect between the wavenumber and the dimension of the local approximation spaces, which affects the error decay with respect to the oversampling size; see Remark~\ref{rem:3-4}. In particular, it is shown that the (approximation) error of the method in the high-frequency regime does not always decay with increasing oversampling size, in contrast to the positive definite case \cite{ma2021novel}. A quasi-optimal global convergence rate for the method is established under the scale resolution condition. Compared to the usual Trefftz methods, a second-level discretization for solving the local problems is needed in our method. However, these local problems can be solved entirely in parallel and the local basis functions can be reused. Therefore, a significant gain in efficiency can be expected when solving problems with many different wave sources.

At the discrete level, the MS-GFEM delivers a non-iterative DDM for solving linear systems arising from standard FE discretizations of heterogeneous Helmholtz problems. The optimal local spaces for approximating the standard FE solution are constructed analogously to the continuous level by solving discrete local eigenvalue problems. Similar local and global error estimates for the discrete method are obtained under assumptions akin to those in the continuous setting. Furthermore, a proof of the convergence of the discrete eigenvalues as $h\rightarrow 0$ to those of the continuous problems is given ($h$ denoting the mesh size), indicating that the discrete local approximation errors converge towards the continuous ones as $h\rightarrow 0$ for fixed dimensions of local spaces. As in the case of classical two-level DDMs for the Helmholtz equation, the local approximation spaces and the boundary conditions of the local problems in the method are suitably adapted to the Helmholtz equation, when compared with those for positive definite problems \cite{ma2021novel,ma2021error}; see Remark~\ref{rem:3-1}. However, although both methods are based on solving some local problems and a global coarse problem, the discrete MS-GFEM can solve the problem in one shot without iteration.

Our work distinguishes itself from previous works on multiscale methods for heterogeneous Helmholtz problems in three aspects: in contrast to previous studies, a non-standard FEM discretization and an efficient method for solving discrete Helmholtz problems are unified under the same mathematical framework. Second, all the local analysis of the method, particularly the local approximation error estimates, hold without a scale resolution condition, and so do the resulting global approximation results. Finally, the effect of the wavenumber on the error of the method is systematically investigated both theoretically and numerically, especially the aforementioned error resonance phenomenon. It is worth noting that although the scale resolution condition is required for proving quasi optimality of the method, numerical results show that it is not necessary in practice, just as the condition "$k^{2}h$ is small" for the standard linear FEM for the Helmholtz equation \cite{melenk1995generalized} is not required in practice either. In fact, in our numerical experiments, we obtain excellent results for $kH\approx 13$, i.e., about two wavelengths per subdomain. Also, with the desirable local approximation spaces, it may be possible to use a least squares method \cite{monk1999least}, instead of the partition of unity method, to yield a coercive formulation, and thus get rid of this condition.

The remainder of this paper is structured as follows. In \cref{sec-2}, we introduce the model Helmholtz problem considered in this paper and describe the continuous MS-GFEM for solving the problem. In \cref{sec-3}, we prove the local and global error estimates for the continuous method. The discrete MS-GFEM for solving the discrete Helmholtz problem is presented in the first part of \cref{sec-4}, followed by some technical tools used in the subsequent analysis. We focus on the convergence analysis of the discrete MS-GFEM in \cref{sec-5}, including the local and global error estimates and the convergence of the eigenvalues of the local eigenproblems. Numerical results are reported in \cref{sec-6} to support the theoretical analysis and to validate the method.

\section{Problem formulation and the continuous MS-GFEM}\label{sec-2}
\subsection{Model Helmholtz problem}
Let $\Omega\subset \mathbb{R}^{d}$ $(d=2,3)$ be a bounded Lipschitz domain with boundary $\Gamma$. Given $k>0$, we consider the following heterogeneous Helmholtz equation with mixed boundary conditions: Find $\cm{u^{\mathdutchcal{e}}}:\Omega\rightarrow\mathbb{C}$ such that
\begin{equation}\label{eq:1-1}
\left\{
\begin{array}{lll}
{\displaystyle -{\rm div}(A\nabla \cm{u^{\mathdutchcal{e}}}) - k^{2}V^{2}\cm{u^{\mathdutchcal{e}}}= f,\,\quad {\rm in}\;\, \Omega }\\[2mm]
{\displaystyle  \;\,\qquad \quad \quad \;\;\qquad \qquad \cm{u^{\mathdutchcal{e}}} = 0,\quad \,{\rm on}\;\,\Gamma_{D}}\\[2mm]
{\displaystyle \qquad \,\;A\nabla \cm{u^{\mathdutchcal{e}}}\cdot {\bm n} - {\rm i}k\beta \cm{u^{\mathdutchcal{e}}}=g,  \quad \,{\rm on}\;\,\Gamma_{R},}
\end{array}
\right.
\end{equation}
where ${\bm n}$ denotes the outward unit normal to $\Gamma$, $\Gamma_{D}\cap\Gamma_{R} = \emptyset$, and $\Gamma = \overline{\Gamma_{D}}\cup\overline{\Gamma_{R}}$. We suppose that $|\Gamma_{R}|>0$. Throughout the paper, we make the following assumptions:

\begin{assumption}\label{ass:1-1} 
\begin{itemize}
\item[(i)] $A \in (L^{\infty}(\Omega))^{d\times d}$ is pointwise symmetric and there exists $0< a_{\rm min} < a_{\rm max}<\infty$ such that
\begin{equation}\label{eq:1-1-0}
a_{\rm min} |{\bm \xi}|^{2} \leq A({\bm x}){\bm \xi}\cdot{\bm \xi} \leq a_{\rm max}  |{\bm \xi}|^{2},\quad \forall {\bm \xi}\in \mathbb{R}^{d},\quad {\bm x} \in\Omega;
\end{equation}
\item[(ii)] $V \in L^{\infty}(\Omega)$ and there exists $0<V_{\rm min}<V_{\rm max}<\infty$ such that $V_{\rm min}\leq V({\bm x})\leq V_{\rm max}$ for all ${\bm x}\in \Omega$;

\vspace{1ex}
\item[(iii)] $f\in L^{2}(\Omega)$, $g\in L^{2}(\Gamma_{R})$ and $\beta\in L^{\infty}(\Gamma_{R})$. \cm{Moreover, $\beta({\bm x})>0$ for all ${\bm x}\in \Gamma_{R}$}.
\end{itemize}
\end{assumption}
We note that the results in this paper hold for a wider class of $f$ and $g$ in a weaker dual space, but we omit this extension for ease of presentation.

Defining the space
\begin{equation}
H^{1}_{D}(\Omega) = \big\{v\in H^{1}(\Omega)\;:\; v = 0 \;\;{\rm on}\;\,\Gamma_{D}\big\}
\end{equation}
and the sesquilinear form $\mathcal{B}: H^{1}_{D}(\Omega)\times H^{1}_{D}(\Omega) \rightarrow \mathbb{C}$ by
\begin{equation}
\mathcal{B}(u,v) = \int_{\Omega} \big(A\nabla u\cdot {\nabla \overline{v}}-k^{2}V^{2}u\overline{v}\big) \,d{\bm x} - {\rm i}k\int_{\Gamma_{R}} \beta u\overline{v}\,d{\bm s},\quad \forall u,\,v\in H^{1}_{D}(\Omega),
\end{equation}
the weak formulation of the problem \cref{eq:1-1} is to find $u^{\mathdutchbcal{\mathdutchcal{e}}}\in H_{D}^{1}(\Omega)$ such that 
\begin{equation}\label{eq:1-2}
\mathcal{B}(u^{{\mathdutchcal{e}}},v) = F(v):=\int_{\Gamma_{R}} g\overline{v} \,d{\bm s}+ \int_{\Omega}f\overline{v} \,d{\bm x},\quad \forall v\in H_{D}^{1}(\Omega).
\end{equation}
For later use, we introduce some local sesquilinear forms. Let $\omega$ be a subdomain of $\Omega$ and $u$, $v\in H^{1}(\omega)$. We define 
\begin{equation}\label{eq:1-3}
\begin{array}{lll}
{\displaystyle \mathcal{B}_{\omega}(u,v) = \int_{\omega} \big(A\nabla u\cdot {\nabla \overline{v}}-k^{2}V^{2}u\overline{v}\big) \,d{\bm x} - {\rm i}k\int_{\Gamma_{R}\cap \partial \omega} \beta u\overline{v}\,d{\bm s},}\\[3mm]
{\displaystyle  \mathcal{A}_{\omega}(u,v) = \int_{\omega} A\nabla u\cdot {\nabla \overline{v}}\,d{\bm x},\quad \mathcal{A}_{\omega,k}(u,v) = \int_{\omega} \big(A\nabla u\cdot {\nabla \overline{v}}+k^{2}V^{2}u\overline{v}\big)\,d{\bm x},}
\end{array}
\end{equation}
and
\begin{equation}\label{eq:1-4}
\begin{array}{lll}
{\displaystyle \Vert u \Vert_{\mathcal{A},\omega} =  \sqrt{\mathcal{A}_{\omega}(u,u)},\quad \Vert u \Vert_{\mathcal{A},\omega,k} = \sqrt{\mathcal{A}_{\omega,k}(u,u)}.}
\end{array}
\end{equation}
If $\omega=\Omega$, we simply write $\Vert u \Vert_{\mathcal{A}}$ ($\Vert u \Vert_{\mathcal{A},k}$). It can be proved \cite{melenk1995generalized} that there exists $C_{\mathcal{B}}>0$ independent of $k$ such that
\begin{equation}\label{eq:1-5}
|\mathcal{B}(u,v)|\leq C_{\mathcal{B}} \Vert u \Vert_{\mathcal{A},k} \Vert v \Vert_{\mathcal{A},k},\quad \forall u,\,v\in H^{1}(\Omega).
\end{equation}
We assume the well-posedness of the problem \cref{eq:1-2} as follows.
\begin{assumption}\label{ass:1-2}
For any $f\in L^{2}(\Omega)$ and $g\in L^{2}(\Gamma_{R})$, the problem \cref{eq:1-2} has a unique solution $u^{{\mathdutchcal{e}}}\in H_{D}^{1}(\Omega)$, and there exists $C_{\rm stab}(k)>0$ depending polynomially on $k$ such that
\begin{equation}\label{eq:1-6}
 \Vert u^{{\mathdutchcal{e}}}\Vert_{\mathcal{A},k}\leq C_{\rm stab}(k) (\Vert f\Vert_{L^{2}(\Omega)} + \Vert g\Vert_{L^{2}(\Gamma_{R})}).
\end{equation}
\end{assumption}
\begin{rem}
It was proved in \cite{graham2020stability} that under \cref{ass:1-1}, for $d=2$, the problem \cref{eq:1-2} is uniquely solvable in $H_{D}^{1}(\Omega)$, and for $d=3$, some additional assumptions on the coefficient $A$ are required to obtain the unique solvability; see also \cite{chaumont2021scattering}. The condition of a polynomial growth in $k$ on the stability constant is satisfied for a wide class of problems; see \cite{brown2017multiscale, esterhazy2012stability,lafontaine2021most}. Some special instances \cite{chandler2020high} of an exponential growth in $k$, which makes the problem strongly unstable, are ruled out here.
\end{rem}

\subsection{Continuous MS-GFEM}
In this subsection, we present the continuous MS-GFEM for solving the problem \cref{eq:1-2}. For completeness, we first recall the GFEM. Let $\{ \omega_{i} \}_{i=1}^{M}$ be a collection of open subsets of $\Omega$ satisfying $\cup_{i=1}^{M} \omega_{i} = \Omega$ and a pointwise overlap condition:
\begin{equation}\label{eq:2-0}
\exists \,\zeta\in \mathbb{N}\quad\forall {\bm x}\in\Omega \quad {\rm card}\{i\;|\;{\bm x}\in \omega_{i} \}\leq\zeta.
\end{equation}
Let $\{ \chi_{i} \}_{i=1}^{M}$ be a partition of unity subordinate to the open cover satisfying
\begin{equation}\label{eq:2-0-1}
\begin{array}{lll}
{\displaystyle 0\leq \chi_{i}({\bm x})\leq 1,\quad \sum_{i=1}^{M}\chi_{i}({\bm x}) =1, \quad \forall {\bm x}\in \Omega,}\\[4mm]
{\displaystyle \chi_{i}({\bm x})= 0, \quad \forall {\bm x}\in \Omega\setminus\omega_{i}, \quad i=1,\cdots,M,}\\[2mm]
{\displaystyle \chi_{i}\in W^{1,\infty}(\omega_{i}),\;\;\Vert\nabla \chi_{i} \Vert_{L^{\infty}(\omega_i)} \leq \frac{C_{\chi}}{\mathrm{diam}\,(\omega_{i})},\quad i=1,\cdots,M.}
\end{array}
\end{equation}

For each $i=1,\cdots,M$, let $u^{p}_{i} \in H^{1}(\omega_{i})$ be a local particular function and $S_{n_{i}}(\omega_{i})\subset H^{1}(\omega_{i})$ be a local approximation space of dimension $n_{i}$ such that $u^{p}_{i}$ and all functions in $S_{n_{i}}(\omega_{i})$ vanish on $\partial \omega_{i}\cap \Gamma_{D}$. A key feature of the GFEM is to build the global particular function $u^{p}$ and the trial space $S_{n}(\Omega)$ by pasting the local particular functions and the local approximation spaces together using the partition of unity:
\begin{equation}\label{eq:2-0-2}
\begin{array}{lll}
{\displaystyle u^{p} = \sum_{i=1}^{M}\chi_{i}u^{p}_{i},\quad  S_{n}(\Omega) =\Big\{\sum_{i=1}^{M}\chi_{i}\phi_{i}\,:\, \phi_{i}\in S_{n_{i}}(\omega_{i})\Big\}. }
\end{array}
\vspace{-1ex}
\end{equation}
By the assumptions on the partition of unity in \cref{eq:2-0-1}, we see that $u^{p}\in H^{1}_{D}(\Omega)$ and $S_{n}(\Omega)\subset H^{1}_{D}(\Omega)$. The final step of the GFEM is the finite-dimensional Galerkin approximation: Find $u^{G}= u^{p} + u^{s}$ with $u^{s}\in S_{n}(\Omega)$ such that
\begin{equation}\label{eq:2-1}
\mathcal{B}(u^{G}, v) = F(v),\quad \forall v\in S_{n}(\Omega).
\end{equation}
Combining \cref{eq:1-2,eq:2-1} yields the Galerkin orthogonality:
\begin{equation}\label{eq:2-1-0}
\mathcal{B}(u^{\mathdutchcal{e}}-u^{G},\,v) = 0\quad \forall v\in S_{n}(\Omega).
\end{equation}

Obviously, the core of the GFEM is the construction of the local particular functions and of the local approximation spaces. In the MS-GFEM, the local particular functions are defined as solutions of local Helmholtz problems and the local approximation spaces are constructed by eigenvectors of local eigenvalue problems; see \cref{thm:2-2}. We detail the construction in what follows.
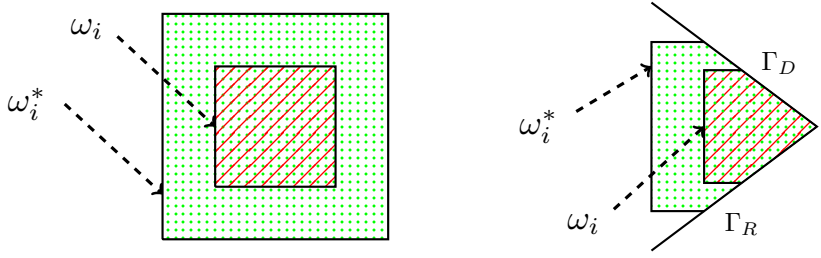
\begin{figure}\label{fig:2-1}
\centering
\begin{tikzpicture}
\draw[pattern=dots, pattern color=green] (0, 0) rectangle (3, 3);
\draw[pattern=north east hatch, hatch distance=2.5mm, hatch thickness=.5pt, pattern color=red] (0.7, 0.7) rectangle (2.3, 2.3); 
\draw[->, dashed, very thick] (-0.6,2.7)--(0.7,1.5);
\draw[thick](0.7,0.7)--(2.3,0.7)--(2.3,2.3)--(0.7,2.3)--(0.7,0.7);
\draw[thick](0,0)--(3,0)--(3,3)--(0,3)--(0,0);
\draw[->, dashed, very thick] (-1.3,1.8)--(0,0.6);
\node[scale=1.3] at (-1.0, 2.8) {$\omega_{i}$};
\node[scale=1.3] at (-1.8, 1.8) {$\omega^{\ast}_{i}$};

 \draw[pattern=north east hatch, hatch distance=2.5mm, hatch thickness=.5pt, pattern color=red] (7.7, 2.25)--(7.2, 2.25)--(7.2,0.75)--(7.7,0.75)--(8.7,1.5)--(7.7,2.25); 
 \draw[pattern=dots, pattern color=green] (7.2, 2.625)--(6.5, 2.625)--(6.5, 0.375)--(7.2,0.375)--(8.7,1.5)--(7.2, 2.625);
\draw[thick] (6.5,-0.15)--(8.7,1.5)--(6.5,3.15);
\draw[thick] (7.7, 2.25)--(7.2, 2.25)--(7.2,0.75)--(7.7,0.75);
\draw[thick] (7.2, 2.625)--(6.5, 2.625)--(6.5, 0.375)--(7.2,0.375);

\draw[->, dashed, very thick] (6.0,0.4)--(7.2,1.5);
\draw[->, dashed, very thick] (5.5,1.7)--(6.5,2.3);
   
\node[scale=1.3] at (5.6,0.2) {$\omega_{i}$};
\node[scale=1.3] at (5.0,1.5) {$\omega^{\ast}_{i}$};
\node at (8.2,2.35) {$\Gamma_{D}$};
\node at (7.7,0.2) {$\Gamma_{R}$};
\end{tikzpicture}
\caption{Illustration of an interior subdomain (left) and a subdomain intersecting the outer boundary (right) and the associated oversampling domains.}
\end{figure}
For a subdomain $\omega_{i}$, we introduce an \textit{oversampling domain} $\omega_{i}^{\ast}$ \cm{with a Lipschitz boundary} such that $\omega_{i} \subset \omega_{i}^{\ast}\subset\Omega$ as illustrated in \cref{fig:2-1}. On each $\omega_{i}^{\ast}$, we define 
\begin{equation}\label{eq:2-2}
\begin{array}{lll}
{\displaystyle H^{1}_{D}(\omega^{\ast}_{i}) = \big\{v\in H^{1}(\omega^{\ast}_{i})\;:\;v = 0\;\;{\rm on}\;\, \partial \omega^{\ast}_{i}\cap \Gamma_{D}\big\}}\\[2mm]
{\displaystyle H^{1}_{DI}(\omega^{\ast}_{i}) = \big\{v\in H^{1}_{D}(\omega^{\ast}_{i})\;:\;v = 0\;\;{\rm on}\;\, \partial \omega^{\ast}_{i}\cap \Omega\big\},}
\end{array}
\end{equation}
and the generalized harmonic space
\begin{equation}\label{eq:2-3}
H_{\mathcal{B}}(\omega^{\ast}_{i}) = \big\{ u\in H^{1}_{D}(\omega^{\ast}_{i})\;:\;  \mathcal{B}_{\omega^{\ast}_{i}}(u, v) = 0 \quad \forall v\in H^{1}_{DI}(\omega^{\ast}_{i})\big\}. 
\end{equation}
$H_{\mathcal{B}}(\omega^{\ast}_{i})$ consists of local solutions of the Helmholtz equation with vanishing source term. A general result on equivalent norms for Sobolev spaces (see, e.g., \cite[Chapter 2]{necas2011direct}) gives that there exists $C>0$, such that for any $u\in H_{\mathcal{B}}(\omega^{\ast}_{i})$,
\begin{equation}\label{eq:2-3-0}
\Vert u\Vert_{L^{2}(\omega_{i}^{\ast})}\leq C\Vert \nabla u \Vert_{L^{2}(\omega_{i}^{\ast})}.
\vspace{-1ex}
\end{equation}
Therefore, $\Vert\cdot\Vert_{\mathcal{A},\omega_{i}^{\ast}}$ is a norm on $H_{\mathcal{B}}(\omega^{\ast}_{i})$. Next we consider the following local Helmholtz problem on $\omega_{i}^{\ast}$:
\begin{equation}\label{eq:2-4}
\left\{
\begin{array}{lll}
{\displaystyle -{\rm div}(A\nabla \psi_{i}) - k^{2}V^{2}\psi_{i}= f,\,\quad {\rm in}\;\, \omega^{\ast}_{i} }\\[2mm]
{\displaystyle \qquad \qquad \,\;\;\;  \qquad \qquad \psi_{i} = 0, \quad\; {\rm on}\;\, \partial \omega^{\ast}_{i}\cap \Gamma_{D}}\\[2mm]
{\displaystyle \qquad \;A\nabla \psi_{i}\cdot {\bm n} - {\rm i}k\beta\psi_{i}=g,  \,\quad \,{\rm on}\;\,\partial \omega^{\ast}_{i} \cap \Gamma_{R}}\\[2mm]
{\displaystyle \qquad \;A\nabla \psi_{i}\cdot {\bm n} - {\rm i}kV\psi_{i}=0,\quad\; {\rm on}\;\, \partial \omega^{\ast}_{i}\cap \Omega.}
\end{array}
\right.
\end{equation}
The weak formulation of the problem \cref{eq:2-4} is given by: Find $\psi_{i}\in H^{1}_{D}(\omega^{\ast}_{i})$, such that for any $v\in H^{1}_{D}(\omega^{\ast}_{i})$,
\vspace{-1ex}
\begin{equation}\label{eq:2-4-0}
\mathcal{B}_{\omega_{i}^{\ast}}(\psi_{i},v) - {\rm i}k\int_{\partial \omega^{\ast}_{i}\cap \Omega}V\psi_{i}\overline{v}\,d{\bm s}= F_{\omega_{i}^{\ast}}(v):= \int_{\Gamma_{R}\cap \partial \omega_{i}^{\ast}} g\overline{v} \,d{\bm s}+ \int_{\omega_{i}^{\ast}}f\overline{v} \,d{\bm x}.
\vspace{-1.5ex}
\end{equation}
\begin{rem}\label{rem:3-1}
In contrast to the Dirichlet boundary conditions for local problems in the positive definite case \cite{ma2021novel}, we impose impedance boundary conditions on the artificial interior boundaries for the local Helmholtz problems to guarantee their unique solvability. \cm{Such boundary conditions (involving $V$) are commonly used as transmission conditions in domain decomposition methods for Helmholtz problems \cite{bootland2021comparison,gong2021domain}. However, any other interior boundary conditions that guarantee well-posedness of the local problems can also be used}.
\end{rem}
We assume the well-posedness of the local Helmholtz problem \cref{eq:2-4-0} as follows.
\begin{assumption}\label{ass:2-1}
For each $i=1,\cdots,M$, the problem \cref{eq:2-4-0} has a unique solution $\psi_{i}\in H^{1}_{D}(\omega^{\ast}_{i})$, and there exists a constant $C^{i}_{\rm stab}(k)$ depending polynomially on $k$ such that
\begin{equation}\label{eq:2-4-1}
\Vert \psi_{i}\Vert_{\mathcal{A},\omega_{i}^{\ast},k}\leq C^{i}_{\rm stab}(k)(\Vert f \Vert_{L^{2}(\omega_{i}^{\ast})} + \Vert g\Vert_{L^{2}(\partial \omega^{\ast}_{i} \cap \Gamma_{R})}).
\end{equation}
\end{assumption}
\cm{\begin{rem}
Following the lines of \cite[Proposition A.3]{chaumont2020multiscale}, it can be shown that the stability estimate \cref{eq:2-4-1} holds with $C^{i}_{\rm stab}(k) = O(1)$ if ${\rm diam}(\omega_i^{\ast}) = O(k^{-1})$.
\end{rem}}

Combining \cref{eq:1-2,eq:2-4-0}, we see that $u^{{\mathdutchcal{e}}}|_{\omega_{i}^{\ast}}-\psi_{i}\in H_{\mathcal{B}}(\omega^{\ast}_{i})$, where $u^{{\mathdutchcal{e}}}$ is the exact solution of the global problem. Therefore, the exact solution is locally decomposed into two parts, one being the solution of the local Helmholtz problem and another belonging to the generalized harmonic space $H_{\mathcal{B}}(\omega^{\ast}_{i})$. To approximate the latter part, we follow the lines of \cite{ma2021novel} to construct a finite-dimensional space that is optimal in an appropriate sense,  using the singular vectors of a compact operator involving the partition of unity function. To this end, we first give a novel identity and the resulting Caccioppoli-type inequality for functions in the generalized harmonic space, which plays a crucial role in the analysis of the continuous MS-GFEM.
\begin{lemma}\label{thm:2-1}
Assume that $\eta \in W^{1,\infty}(\omega_{i}^{\ast})$ satisfies $\eta({\bm x}) = 0$ on $\partial\omega_{i}^{\ast} \cap \Omega$. Then, for any $u, \,v\in H_{\mathcal{B}}(\omega_{i}^{\ast})$,
\begin{equation}\label{eq:2-5}
\mathcal{A}_{\omega_{i}^{\ast}}(\eta u,\eta v) = \int_{\omega_{i}^{\ast}}(A\nabla \eta \cdot \nabla \eta + k^{2}V^{2}\eta^{2}) u\overline{v}\,d{\bm x}.
\vspace{-1.5ex}
\end{equation}
In particular, 
\begin{equation}\label{eq:2-6}
\Vert \eta u \Vert_{\mathcal{A}, \omega_{i}^{\ast}} \leq \Big(a_{\rm max}^{1/2} \Vert \nabla \eta \Vert_{L^{\infty}(\omega_{i}^{\ast})} +kV_{\rm max}\Vert \eta \Vert_{L^{\infty}(\omega_{i}^{\ast})}\Big)\Vert u \Vert_{L^{2}(\omega_{i}^{\ast})},
\end{equation}
where $a_{\rm max}$ and $V_{\rm max}$ are the (spectral) upper bounds of $A$ and $V$ in \cref{ass:1-1}.
\end{lemma}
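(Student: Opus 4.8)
The plan is to obtain the identity \cref{eq:2-5} by testing the generalized harmonicity condition \cref{eq:2-3} against $\eta^{2}v$ and against $\eta^{2}u$, and then adding the two resulting relations. Note first that $\eta$ is real-valued (as the right-hand side of \cref{eq:2-5}, which involves $\eta^{2}$ and $\nabla\eta\cdot\nabla\eta$ rather than $|\eta|^{2}$ and $\nabla\eta\cdot\nabla\overline{\eta}$, implicitly requires), that $\eta^{2}u$ and $\eta^{2}v$ lie in $H^{1}(\omega_{i}^{\ast})$ since $\eta\in W^{1,\infty}(\omega_{i}^{\ast})$ and $u,v\in H^{1}(\omega_{i}^{\ast})$, and that they belong to $H^{1}_{DI}(\omega_{i}^{\ast})$: they vanish on $\partial\omega_{i}^{\ast}\cap\Gamma_{D}$ because $u,v\in H^{1}_{D}(\omega_{i}^{\ast})$ do, and on $\partial\omega_{i}^{\ast}\cap\Omega$ because $\eta$ does. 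Hence both are admissible test functions in \cref{eq:2-3}.

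First I would substitute $\eta^{2}v$ into $\mathcal{B}_{\omega_{i}^{\ast}}(u,\cdot)=0$, expand $\nabla(\eta^{2}v)=\eta^{2}\nabla v+2\eta v\nabla\eta$, and rearrange to get
\begin{equation*}
\int_{\omega_{i}^{\ast}}\eta^{2}A\nabla u\cdot\nabla\overline{v}\,d{\bm x}+2\int_{\omega_{i}^{\ast}}\eta\,\overline{v}\,A\nabla u\cdot\nabla\eta\,d{\bm x}=k^{2}\int_{\omega_{i}^{\ast}}V^{2}\eta^{2}u\overline{v}\,d{\bm x}+{\rm i}k\int_{\Gamma_{R}\cap\partial\omega_{i}^{\ast}}\beta\eta^{2}u\overline{v}\,d{\bm s}.
\end{equation*}
Symmetrically, substituting $\eta^{2}u$ into $\mathcal{B}_{\omega_{i}^{\ast}}(v,\cdot)=0$ and taking the complex conjugate of the whole equation (using that $A$ is real symmetric and $V$, $\beta$, $\eta$ are real, so that only the impedance term changes sign) yields
\begin{equation*}
\int_{\omega_{i}^{\ast}}\eta^{2}A\nabla u\cdot\nabla\overline{v}\,d{\bm x}+2\int_{\omega_{i}^{\ast}}\eta\,u\,A\nabla\eta\cdot\nabla\overline{v}\,d{\bm x}=k^{2}\int_{\omega_{i}^{\ast}}V^{2}\eta^{2}u\overline{v}\,d{\bm x}-{\rm i}k\int_{\Gamma_{R}\cap\partial\omega_{i}^{\ast}}\beta\eta^{2}u\overline{v}\,d{\bm s}.
\end{equation*}
Adding these two identities, the boundary integrals over $\Gamma_{R}\cap\partial\omega_{i}^{\ast}$ cancel, and dividing by $2$ gives
\begin{equation*}
\int_{\omega_{i}^{\ast}}\eta^{2}A\nabla u\cdot\nabla\overline{v}\,d{\bm x}+\int_{\omega_{i}^{\ast}}\eta\,\overline{v}\,A\nabla u\cdot\nabla\eta\,d{\bm x}+\int_{\omega_{i}^{\ast}}\eta\,u\,A\nabla\eta\cdot\nabla\overline{v}\,d{\bm x}=k^{2}\int_{\omega_{i}^{\ast}}V^{2}\eta^{2}u\overline{v}\,d{\bm x}.
\end{equation*}
Finally, expanding $\mathcal{A}_{\omega_{i}^{\ast}}(\eta u,\eta v)=\int_{\omega_{i}^{\ast}}A\nabla(\eta u)\cdot\nabla(\overline{\eta v})\,d{\bm x}$ by the product rule produces four terms, the first three of which coincide with the left-hand side just obtained; replacing them by $k^{2}\int_{\omega_{i}^{\ast}}V^{2}\eta^{2}u\overline{v}\,d{\bm x}$ and keeping the remaining term $\int_{\omega_{i}^{\ast}}u\overline{v}\,A\nabla\eta\cdot\nabla\eta\,d{\bm x}$ gives exactly \cref{eq:2-5}.

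For \cref{eq:2-6}, I would set $v=u$ in \cref{eq:2-5} to obtain $\Vert\eta u\Vert_{\mathcal{A},\omega_{i}^{\ast}}^{2}=\int_{\omega_{i}^{\ast}}(A\nabla\eta\cdot\nabla\eta+k^{2}V^{2}\eta^{2})|u|^{2}\,d{\bm x}$, bound the integrand pointwise via \cref{ass:1-1} by $\big(a_{\rm max}\Vert\nabla\eta\Vert_{L^{\infty}(\omega_{i}^{\ast})}^{2}+k^{2}V_{\rm max}^{2}\Vert\eta\Vert_{L^{\infty}(\omega_{i}^{\ast})}^{2}\big)|u|^{2}$, and take square roots using $\sqrt{a^{2}+b^{2}}\le a+b$ for $a,b\ge 0$. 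The only real obstacle is organizational: keeping careful track of conjugates and product rules, and in particular verifying that the impedance contributions from the two tested equations indeed appear with opposite signs — a fact that hinges on the specific structure of $\mathcal{B}_{\omega_{i}^{\ast}}$ — so that they cancel upon addition. Everything else is a routine computation.
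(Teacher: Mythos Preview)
Your proposal is correct and follows essentially the same approach as the paper's proof: both test the generalized harmonicity condition against $\eta^{2}v$ and $\eta^{2}u$, symmetrize so that the impedance boundary terms cancel, and match the result with the product-rule expansion of $\mathcal{A}_{\omega_{i}^{\ast}}(\eta u,\eta v)$. The only difference is organizational---the paper first writes an algebraic identity for $\int A\nabla(\eta u)\cdot\nabla(\eta\overline{v})$ isolating the term $\int A\nabla u\cdot\nabla(\eta^{2}\overline{v})$ and then invokes harmonicity, whereas you start from harmonicity and finish with the product rule---but the content is the same.
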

The proof is given in \cref{sec:A.3}.\pagebreak

Now we are ready to construct the desired optimal space for approximating a generalized harmonic function. We start by introducing the operator 
\begin{equation}\label{eq:2-12}
P_{i}: \big(H_{\mathcal{B}}(\omega_{i}^{\ast}), \,\Vert \cdot \Vert_{\mathcal{A},\omega_{i}^{\ast}}\big) \rightarrow \big(H_{DI}^{1}(\omega_{i}), \,\Vert \cdot \Vert_{\mathcal{A},\omega_{i},k}\big) \;\;{\rm such \;\,that}\;\; P_{i}(v) = \chi_{i}v,
\end{equation}
where $\chi_{i}$ is the partition of unity function supported on $\omega_{i}$. Note that here and after, we equip the spaces $H_{\mathcal{B}}(\omega_{i}^{\ast})$ and $H_{DI}^{1}(\omega_{i})$ with the norms $\Vert \cdot \Vert_{\mathcal{A},\omega_{i}^{\ast}}$ and $\Vert \cdot \Vert_{\mathcal{A},\omega_{i},k}$, respectively. It follows from \cref{thm:2-1} and the compact embedding $H^{1}(\omega_{i}^{\ast})\subset L^{2}(\omega_{i}^{\ast})$ that the operator $P_{i}$ is compact. Next we consider the following Kolmogorov $n$-width of the operator $P_{i}$:
\begin{equation}\label{eq:2-13}
\begin{array}{lll}
{\displaystyle d_{n}(\omega_{i},\omega_{i}^{\ast}) :=\inf_{Q(n)\subset H_{DI}^{1}(\omega_{i})}\sup_{u\in H_{\mathcal{B}}(\omega_{i}^{\ast})} \inf_{v\in Q(n)}\frac {\Vert P_{i}u-v\Vert_{\mathcal{A},\omega_{i},k}}{\Vert u \Vert_{\mathcal{A},\omega_{i}^{\ast}}},}
\end{array}
\end{equation}
where the first infimum is taken over all $n$-dimensional subspaces of $H_{DI}^{1}(\omega_{i})$. Then the optimal approximation space $\hat{Q}(n)$ satisfies
\begin{equation}\label{eq:2-14}
d_{n}(\omega_{i},\omega_{i}^{\ast}) =\sup_{u\in H_{\mathcal{B}}(\omega_{i}^{\ast})} \inf_{v\in \hat{Q}(n)}\frac {\Vert P_{i}u-v\Vert_{\mathcal{A},\omega_{i},k}}{\Vert u \Vert_{\mathcal{A},\omega_{i}^{\ast}}}.
\end{equation}

The following lemma gives a characterization of the $n$-width via the singular values and singular vectors of the compact operator $P_{i}$.
\begin{lemma}\label{lem:2-1}
For each $j\in\mathbb{N}$, let $(\lambda_{j},\phi_{j})$ be the $j$-th eigenpair (arranged in decreasing order) of the problem 
\begin{equation}\label{eq:2-15}
\mathcal{A}_{\omega_{i},k}(\chi_{i} \phi, \chi_{i} v) = \lambda\,\mathcal{A}_{\omega_{i}^{\ast}}(\phi, v),\quad \forall v\in H_{\mathcal{B}}(\omega_{i}^{\ast}).
\end{equation}
Then $d_{n}(\omega_{i},\omega_{i}^{\ast}) =\lambda^{1/2}_{n+1}$ and the associated optimal approximation space is given by 
\begin{equation}\label{eq:2-16}
\hat{Q}(n) = {\rm span}\{\chi_{i} \phi_{1}, \cdots, \chi_{i} \phi_{n}\}.
\end{equation}
\end{lemma}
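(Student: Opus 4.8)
The plan is to recognize that the Kolmogorov $n$-width $d_n(\omega_i,\omega_i^\ast)$ of the compact operator $P_i$ between Hilbert spaces is, by classical approximation theory, given by its $(n+1)$-st singular value, and that the eigenproblem \cref{eq:2-15} is exactly the variational form of the eigenvalue problem for $P_i^\ast P_i$. First I would set $X = (H_{\mathcal{B}}(\omega_i^\ast),\Vert\cdot\Vert_{\mathcal{A},\omega_i^\ast})$ and $Y=(H_{DI}^1(\omega_i),\Vert\cdot\Vert_{\mathcal{A},\omega_i,k})$; both are Hilbert spaces (for $X$, completeness follows since $H_{\mathcal{B}}(\omega_i^\ast)$ is a closed subspace of $H^1_D(\omega_i^\ast)$ and $\Vert\cdot\Vert_{\mathcal{A},\omega_i^\ast}$ is equivalent to the $H^1$-norm on it by \cref{eq:2-3-0}). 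By \cref{thm:2-1} and the compact embedding $H^1(\omega_i^\ast)\hookrightarrow L^2(\omega_i^\ast)$, $P_i$ is compact, so it admits a singular value decomposition: there are orthonormal systems $\{\phi_j\}\subset X$, $\{\xi_j\}\subset Y$ and singular values $\sigma_1\ge\sigma_2\ge\cdots\to 0$ with $P_i\phi_j=\sigma_j\xi_j$ and $P_i^\ast\xi_j=\sigma_j\phi_j$, where $P_i^\ast:Y\to X$ is the Hilbert-space adjoint.

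Next I would invoke the classical identity (going back to the Courant--Fischer / Allahverdiev characterization of singular values as widths) that for a compact operator between Hilbert spaces,
\begin{equation*}
d_n(\omega_i,\omega_i^\ast)=\inf_{\dim Q=n}\ \sup_{\Vert u\Vert_X\le 1}\ \inf_{v\in Q}\Vert P_i u-v\Vert_Y=\sigma_{n+1},
\end{equation*}
with the infimizing subspace $\hat Q(n)=\mathrm{span}\{P_i\phi_1,\dots,P_i\phi_n\}=\mathrm{span}\{\xi_1,\dots,\xi_n\}=\mathrm{span}\{\chi_i\phi_1,\dots,\chi_i\phi_n\}$. The upper bound $d_n\le\sigma_{n+1}$ comes from taking $Q=\hat Q(n)$ and, for arbitrary $u=\sum_j c_j\phi_j$, estimating the best approximation of $P_iu$ by its orthogonal projection onto $\hat Q(n)$, giving $\Vert P_iu-\Pi_{\hat Q(n)}P_iu\Vert_Y^2=\sum_{j\ge n+1}\sigma_j^2|c_j|^2\le\sigma_{n+1}^2\Vert u\Vert_X^2$; the matching lower bound $d_n\ge\sigma_{n+1}$ is the standard dimension-counting argument: any $n$-dimensional $Q$ has nontrivial intersection, on the sphere, with $\mathrm{span}\{\xi_1,\dots,\xi_{n+1}\}$ viewed through $P_i$, forcing the sup to be at least $\sigma_{n+1}$ (one restricts to $u$ in the $(n+1)$-dimensional space $\mathrm{span}\{\phi_1,\dots,\phi_{n+1}\}$ and picks $u$ with $P_iu\perp Q$).

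Finally I would reconcile the singular system with the eigenproblem \cref{eq:2-15}. The self-adjoint compact operator $P_i^\ast P_i:X\to X$ satisfies $\langle P_i^\ast P_i\,\phi,v\rangle_X=\langle P_i\phi,P_iv\rangle_Y=\mathcal{A}_{\omega_i,k}(\chi_i\phi,\chi_i v)$ by definition of $P_i$ and of the inner product on $Y$, while $\langle\phi,v\rangle_X=\mathcal{A}_{\omega_i^\ast}(\phi,v)$; hence the eigenpairs $(\lambda_j,\phi_j)$ of \cref{eq:2-15} are exactly the eigenpairs of $P_i^\ast P_i$, so $\lambda_j=\sigma_j^2$ with $\lambda_1\ge\lambda_2\ge\cdots$, and $d_n(\omega_i,\omega_i^\ast)=\sigma_{n+1}=\lambda_{n+1}^{1/2}$, with optimal space $\hat Q(n)=\mathrm{span}\{\chi_i\phi_1,\dots,\chi_i\phi_n\}$ as claimed. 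The main obstacle, such as it is, is not analytic depth but bookkeeping: one must verify that the ambient spaces are genuinely complete Hilbert spaces with the stated norms so that the SVD applies, and carefully track that the abstract "singular vectors on the $Y$ side" $\xi_j$ coincide (up to normalization) with $\chi_i\phi_j$; this is immediate from $\xi_j=\sigma_j^{-1}P_i\phi_j=\sigma_j^{-1}\chi_i\phi_j$ once $\sigma_j>0$, and the case of a finite-rank or degenerate $P_i$ (some $\sigma_j=0$) only makes $d_n$ smaller and is handled by the same dimension count.
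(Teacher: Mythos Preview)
Your proposal is correct and follows essentially the same approach as the paper: both identify \cref{eq:2-15} as the variational formulation of the eigenvalue problem $P_i^\ast P_i\phi=\lambda\phi$ and then invoke the classical characterization of the Kolmogorov $n$-width of a compact operator between Hilbert spaces as its $(n{+}1)$-st singular value, with optimal space $\hat Q(n)=\mathrm{span}\{P_i\phi_1,\dots,P_i\phi_n\}$. The only difference is that the paper cites this classical result directly (\cite[Theorem~2.5]{pinkus1985n}) whereas you sketch the Courant--Fischer/Allahverdiev argument behind it.
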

\begin{proof}
Let $P_{i}^{\ast}:H_{DI}^{1}(\omega_{i})\rightarrow H_{\mathcal{B}}(\omega_{i}^{\ast})$ denote the adjoint of the operator $P_{i}$ \cm{in the $\mathcal{A}_{\omega_i^{\ast}}(\cdot,\cdot)$ inner-product}, and let $\{\phi_{i}\}$ and $\{\lambda_{i}\}$ denote the eigenfunctions and eigenvalues of the problem
\begin{equation}\label{eq:2-17}
P_{i}^{\ast}P_{i} \phi= \lambda\,\phi.
\end{equation} 
A classical result in \cite[Theorem 2.5]{pinkus1985n} gives that $d_{n}(\omega_{i},\omega_{i}^{\ast}) = \lambda^{1/2}_{n+1}$ and that the associated optimal approximation space is given by $\hat{Q}(n) = {\rm span}\{P_{i}\phi_{1}, \cdots, P_{i}\phi_{n}\}$. We complete the proof by noting that \cref{eq:2-15} is the variational formulation of \cref{eq:2-17}.
\end{proof}
\begin{rem}\label{rem:3-2}
By \cref{thm:2-1}, the eigenvalue problem \cref{eq:2-15} can be rewritten as 
\begin{equation}\label{eq:2-18}
(\widetilde{Q}_{k}\phi, \,v)_{L^{2}(\omega_{i}^{\ast})}=\lambda\,\mathcal{A}_{\omega_{i}^{\ast}}(\phi, \,v),\quad \forall v\in H_{\mathcal{B}}(\omega_{i}^{\ast}),
\end{equation}
where $\widetilde{Q}_{k} := A\nabla\chi_{i} \cdot \nabla \chi_{i} + 2k^{2}V^{2}\chi_{i}^{2}$, i.e., an eigenvalue problem with weighted $L^{2}$ norm on $\omega_{i}^{\ast}$.
\end{rem}

From the definition and characterization of the $n$-width, we see that a generalized harmonic function can be well approximated by eigenvectors of the problem \cref{eq:2-15}. This motivates the definition of the local particular function and the local approximation space for the MS-GFEM as follows.
\begin{theorem}\label{thm:2-2}
On subdomain $\omega_i$, let the local particular function and the local approximation space be defined as
\begin{equation}\label{eq:2-19}
u^{p}_{i}:=\psi_{i}|_{\omega_{i}}\;\;\; {\rm and}\;\;\; S_{n_{i}}(\omega_{i}) := {\rm span}\{\phi_{1}|_{\omega_{i}},\cdots, \phi_{n_{i}}|_{\omega_{i}}\},
\end{equation}
where $\psi_{i}$ is the solution of \cref{eq:2-4} and $\phi_{j}$ denotes the $j$-th eigenfunction of the problem \cref{eq:2-15}, and let $u^{{\mathdutchcal{e}}}$ be the exact solution of the problem \cref{eq:1-2}. Then,
\begin{equation}\label{eq:2-20}
\inf_{\varphi\in u^{p}_{i} + S_{n_{i}}(\omega_{i})}\Vert \chi_{i}(u^{{\mathdutchcal{e}}} - \varphi)\Vert_{\mathcal{A},\omega_{i},k}\leq d_{n_{i}}(\omega_{i},\omega_{i}^{\ast})\,\Vert u^{{\mathdutchcal{e}}}- \psi_{i}\Vert_{\mathcal{A},\omega_{i}^{\ast}}.
\end{equation}
\end{theorem}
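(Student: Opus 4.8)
The plan is to obtain \cref{eq:2-20} as a direct consequence of the local splitting of the exact solution together with the $n$-width characterization of \cref{lem:2-1}. First I would isolate the generalized harmonic component of $u^{{\mathdutchcal{e}}}$. Testing the global weak formulation \cref{eq:1-2} against the extension by zero to $\Omega$ of an arbitrary $v\in H^{1}_{DI}(\omega_{i}^{\ast})$ — admissible because such a $v$ vanishes on $\partial\omega_{i}^{\ast}\cap\Omega$ and on $\partial\omega_{i}^{\ast}\cap\Gamma_{D}$, so its zero-extension lies in $H^{1}_{D}(\Omega)$ — gives $\mathcal{B}_{\omega_{i}^{\ast}}(u^{{\mathdutchcal{e}}},v)=F_{\omega_{i}^{\ast}}(v)$; testing the local formulation \cref{eq:2-4-0} against the same $v$ annihilates the impedance term on the artificial interior boundary (again because $v=0$ on $\partial\omega_{i}^{\ast}\cap\Omega$), so $\mathcal{B}_{\omega_{i}^{\ast}}(\psi_{i},v)=F_{\omega_{i}^{\ast}}(v)$. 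Subtracting yields $\mathcal{B}_{\omega_{i}^{\ast}}(u^{{\mathdutchcal{e}}}-\psi_{i},v)=0$ for all $v\in H^{1}_{DI}(\omega_{i}^{\ast})$, i.e. $w_{i}:=u^{{\mathdutchcal{e}}}|_{\omega_{i}^{\ast}}-\psi_{i}\in H_{\mathcal{B}}(\omega_{i}^{\ast})$ (the observation already recorded in the text preceding \cref{thm:2-1}).

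Next I would invoke \cref{lem:2-1}, which identifies the optimal $n_{i}$-dimensional space realizing the $n$-width in \cref{eq:2-14} as $\hat{Q}(n_{i})=\mathrm{span}\{\chi_{i}\phi_{1},\dots,\chi_{i}\phi_{n_{i}}\}$. Applying the defining inequality of the supremum in \cref{eq:2-14} to the particular element $u=w_{i}$ gives $\inf_{v\in\hat{Q}(n_{i})}\Vert P_{i}w_{i}-v\Vert_{\mathcal{A},\omega_{i},k}\le d_{n_{i}}(\omega_{i},\omega_{i}^{\ast})\,\Vert w_{i}\Vert_{\mathcal{A},\omega_{i}^{\ast}}$. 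Since $\hat{Q}(n_{i})$ is finite-dimensional, the infimum is attained at some $v^{\ast}=\chi_{i}\phi$ with $\phi=\sum_{j=1}^{n_{i}}c_{j}\phi_{j}$, and therefore $\phi|_{\omega_{i}}\in S_{n_{i}}(\omega_{i})$.

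Finally I would unwind the definitions on $\omega_{i}$. Setting $\varphi:=u^{p}_{i}+\phi|_{\omega_{i}}=\psi_{i}|_{\omega_{i}}+\phi|_{\omega_{i}}\in u^{p}_{i}+S_{n_{i}}(\omega_{i})$ and using that $\chi_{i}$ is supported in $\omega_{i}$ (so that $P_{i}w_{i}=\chi_{i}w_{i}=\chi_{i}(u^{{\mathdutchcal{e}}}-\psi_{i})$ there), I get $P_{i}w_{i}-v^{\ast}=\chi_{i}(u^{{\mathdutchcal{e}}}-\psi_{i}-\phi)=\chi_{i}(u^{{\mathdutchcal{e}}}-\varphi)$, hence
\[
\Vert\chi_{i}(u^{{\mathdutchcal{e}}}-\varphi)\Vert_{\mathcal{A},\omega_{i},k}=\Vert P_{i}w_{i}-v^{\ast}\Vert_{\mathcal{A},\omega_{i},k}\le d_{n_{i}}(\omega_{i},\omega_{i}^{\ast})\,\Vert u^{{\mathdutchcal{e}}}-\psi_{i}\Vert_{\mathcal{A},\omega_{i}^{\ast}},
\]
and taking the infimum over all $\varphi\in u^{p}_{i}+S_{n_{i}}(\omega_{i})$ yields \cref{eq:2-20}.

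As for the main obstacle: there is essentially no hard step here — once the decomposition $u^{{\mathdutchcal{e}}}|_{\omega_{i}^{\ast}}=\psi_{i}+w_{i}$ with $w_{i}$ generalized harmonic is in hand, \cref{eq:2-20} is just the $n$-width optimality applied to $w_{i}$. The only points requiring care are the admissibility of the test function in both the global and the local formulations (the vanishing of $v$ on the artificial interior boundary simultaneously makes the zero-extension $H^{1}_{D}(\Omega)$-conforming and kills the impedance term in \cref{eq:2-4-0}) and the bookkeeping identification of $\hat{Q}(n_{i})$ restricted to $\omega_{i}$ with $\chi_{i}S_{n_{i}}(\omega_{i})$, which is immediate from \cref{eq:2-16} and \cref{eq:2-19}.
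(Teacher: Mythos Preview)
Your proposal is correct and follows exactly the paper's approach: the paper's proof is the one-liner ``Since $u^{{\mathdutchcal{e}}}|_{\omega_{i}^{\ast}}-\psi_{i}\in H_{\mathcal{B}}(\omega^{\ast}_{i})$, \cref{eq:2-20} follows from the definition and characterization of the $n$-width,'' and you have simply spelled out the details of both steps.
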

\begin{proof}
Since $u^{{\mathdutchcal{e}}}|_{\omega_{i}^{\ast}}-\psi_{i}\in H_{\mathcal{B}}(\omega^{\ast}_{i})$, \cref{eq:2-20} follows from the definition and characterization of the $n$-width.
\end{proof}
\begin{rem}\label{rem:3-3}
In \cite{babuvska1997partition}, plane waves were used to construct the local approximation spaces for the homogeneous Helmholtz equation with constant coefficients in $\mathbb{R}^{2}$ as
\begin{equation}
S_{n_i} = {\rm span}\Big\{\exp\Big({\rm i}k\big(x\cos\frac{2\pi q}{n_i}+ y\sin\frac{2\pi q}{n_i}\big)\Big),\;\;q=0,\dots,n_{i}-1 \Big\}.
\end{equation}
Note that in the constant coefficient case, plane wave solutions lie in the generalized harmonic spaces. Therefore, our method can be viewed as an extension of the method in \cite{babuvska1997partition} to heterogeneous-coefficients and to the inhomogeneous ($f\neq 0$) case. A combination of the classical FEM and the plane wave partition of unity method for the Helmholtz equation can be found in \cite{strouboulis2006generalized}.
\end{rem}


\section{Convergence analysis of the continuous MS-GFEM}\label{sec-3}
In this section, we first derive wavenumber explicit upper bounds for the local approximation errors and then establish a quasi-optimal global convergence of the method.
\subsection{Local approximation error estimates}
By \cref{thm:2-2}, the local approximation error in each subdomain is bounded by the $n$-width \cref{eq:2-13} and thus it suffices to derive upper bounds for the $n$-widths. \cm{To avoid technical complications, we assume that $\Omega$ is a Lipschitz polyhedral domain, which is only relevant to the $n$-widths associated with boundary subdomains. The case of a general Lipschitz domain is discussed in Remarks \ref{rem:3-5} and \ref{rem:3-6} below.} For ease of notation, the subscript index $i$ is omitted in this subsection. 

\cm{A key factor that determines the decay rate of the $n$-width $d_{n}(\omega, \omega^{\ast})$ -- both in the case of an interior subdomain or a boundary subdomain $\omega$ --  is the oversampling size, i.e., the distance between $\omega$ and $\partial \omega^{\ast}$ (modulo the boundary of $\Omega$). We denote it by} $\delta^{\ast} := {\rm dist}(\omega,\,\partial \omega^{\ast}\setminus \partial \Omega)$.
\begin{theorem}\label{thm:3-1}
\cm{
Let $\delta^{\ast}>0$, and let $\sigma = k\delta^{\ast}V_{\rm max}/(2a^{1/2}_{\rm max})$. There exist $n_{0}>0$ and $b>0$ independent of $k$, such that the following two results hold.

\vspace{1ex}
\begin{itemize}
\item[(i)] If $\omega$ and $\omega^{\ast}$ are concentric cubes or truncated concentric cubes (in the boundary subdomain case) of side lengths $H$ and $H^{\ast}$, then
\begin{equation}\label{eq:3-1}
d_{n}(\omega, \omega^{\ast})\leq e^{\sigma}e^{-bn^{1/(d+1)}}e^{-\rho(H/H^{\ast})bn^{1/(d+1)}},\quad \forall\, n>n_{0},
\end{equation}
where $\rho(s) = 1+{s\log(s)}/{(1-s)}$.

\vspace{1ex}
\item[(ii)] If $\omega$ and $\omega^{\ast}$ are general domains, then
\begin{equation}\label{eq:3-1-0}
d_{n}(\omega, \omega^{\ast})\leq e^{\sigma}e^{-bn^{1/(d+1)}},\quad \forall\, n>n_{0}.
\end{equation}
\end{itemize}
}
\end{theorem}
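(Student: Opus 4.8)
The plan is to estimate the Kolmogorov $n$-width $d_n(\omega,\omega^{\ast})$ by constructing an explicit, low-dimensional approximation space for generalized harmonic functions restricted to $\omega$, combining a multi-layer Caccioppoli (Lemma~\ref{thm:2-1}) argument with local polynomial approximation. First I would set up a nested chain of oversampling domains $\omega = \omega_0 \subset \omega_1 \subset \cdots \subset \omega_m = \omega^{\ast}$, each pair separated by roughly $\delta^{\ast}/m$, and choose cutoff functions $\eta_\ell$ supported in $\omega_{\ell+1}$, equal to $1$ on $\omega_\ell$, with $\|\nabla\eta_\ell\|_{L^\infty} \lesssim m/\delta^{\ast}$. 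Applying Lemma~\ref{thm:2-1} (in the form \eqref{eq:2-6}) to $\eta_\ell u$ for $u \in H_{\mathcal B}(\omega^{\ast})$ gives
\begin{equation*}
\Vert \eta_\ell u\Vert_{\mathcal A,\omega^{\ast}} \leq \Big(a_{\rm max}^{1/2}\,\frac{Cm}{\delta^{\ast}} + kV_{\rm max}\Big)\Vert u\Vert_{L^2(\omega_{\ell+1})},
\end{equation*}
which, iterated across the $m$ layers, controls higher Sobolev norms of $u$ on the inner domain in terms of $\|u\|_{L^2(\omega^{\ast})}$, at the cost of a factor $\big(C(m/\delta^{\ast} + k)\big)^m$ per $m$ derivatives. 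The $k$-dependence here is what produces the $e^\sigma$ prefactor after optimizing $m$.

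Next I would approximate $u$ on $\omega$ by (piecewise) polynomials. Subdivide $\omega$ into $\sim N^d$ congruent subcubes of side $H/N$; on each, approximate $u$ by its averaged Taylor polynomial of degree $m-1$ (Bramble–Hilbert), incurring local error $\lesssim (H/N)^m \|u\|_{H^m}$ on each patch, hence a global $H^1$-type error $\lesssim (H/N)^{m-1}\,(C(m/\delta^{\ast}+k))^m \|u\|_{L^2(\omega^{\ast})}$ after summing and using the Caccioppoli bound. The resulting space has dimension $n \sim N^d m^d$. Choosing $N$ and $m$ both proportional to $n^{1/(d+1)}$ — say $m \sim n^{1/(d+1)}$, $N \sim n^{1/(d+1)}$ — balances the polynomial degree against the number of patches; a Stirling estimate on $m^m/(\text{geometric factors})$ then yields $d_n \leq e^\sigma e^{-b n^{1/(d+1)}}$ for $n > n_0$, with $b$ independent of $k$, giving part (ii). For part (i), I would exploit the cube geometry: with concentric cubes the chain of layers can be taken uniform and, crucially, when $H \ll H^{\ast}$ one gets extra room for the oversampling, so the number of usable layers (equivalently, the effective smoothing) scales with $H^{\ast}/H$; tracking this dependence through the same optimization produces the sharpened exponent $bn^{1/(d+1)}(1 + \rho(H/H^{\ast}))$, where $\rho(s) = 1 + s\log s/(1-s)$ captures the gain as $s = H/H^{\ast} \to 0$. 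The boundary-subdomain case is handled by reflecting across the flat pieces of $\partial\Omega$ where Neumann/impedance-type data live and invoking the polyhedral assumption so that the reflected domain is again (a union of) cubes; the Dirichlet portion is handled by odd extension.

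The main obstacle I expect is making the interplay between the $k$-dependent Caccioppoli constant and the polynomial-degree optimization genuinely uniform in $k$: the factor $(C(m/\delta^{\ast}+k))^m$ must be split as $(Cm/\delta^{\ast})^m \cdot e^{O(km)}$, and the second factor only collapses to the clean $e^\sigma = e^{k\delta^{\ast}V_{\rm max}/(2a_{\rm max}^{1/2})}$ prefactor if $m$ is chosen so that $km\delta^{\ast}/m$-type terms telescope correctly — i.e., one must not let $m$ grow faster than $n^{1/(d+1)}$, and one must absorb the $k$-growth into a single layer rather than compounding it. A careful bookkeeping of which constants depend on $d$, $a_{\rm min}$, $a_{\rm max}$, $V_{\rm max}$, the shape-regularity of $\omega$, and the Poincaré constant in \eqref{eq:2-3-0} — but not on $k$ — is the delicate part; everything else (Bramble–Hilbert, summation over patches, Stirling) is routine. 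A secondary technical point is that $d_n(\omega,\omega^{\ast})$ is defined with the target norm $\|\cdot\|_{\mathcal A,\omega,k}$ including a $k^2\|\cdot\|_{L^2}^2$ term while the source norm is $\|\cdot\|_{\mathcal A,\omega^{\ast}}$; one must verify that the polynomial approximation error is controlled in the stronger target norm, which costs only an extra factor of $k$ inside the already-exponentially-decaying bound and is harmless.
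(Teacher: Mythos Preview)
Your proposal has a genuine gap at the very first step. You claim that iterating the Caccioppoli inequality \eqref{eq:2-6} across $m$ layers ``controls higher Sobolev norms of $u$ on the inner domain,'' and then you invoke Bramble--Hilbert / Taylor-polynomial approximation, which requires $u\in H^{m}$. But the coefficients $A$ and $V$ are only in $L^{\infty}$ (Assumption~\ref{ass:1-1}), so functions in $H_{\mathcal{B}}(\omega^{\ast})$ are in general no better than $H^{1}$. Lemma~\ref{thm:2-1} bounds $\Vert\eta u\Vert_{\mathcal{A}}$ by $\Vert u\Vert_{L^{2}}$; iterating it gives you nothing beyond $H^{1}$ control on successively smaller domains --- it does \emph{not} bootstrap to $H^{m}$. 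Consequently the whole polynomial-approximation step and the Stirling optimization built on it collapse. Your argument would work for smooth coefficients, but that is precisely the setting the paper is designed to go beyond.

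The paper's route avoids any higher regularity. The iteration is in \emph{approximation}, not in smoothness: one applies Lemma~\ref{lem:3-1} (an abstract $L^{2}$-approximation result for $H^{1}$-bounded sets, needing only that the $H^{1}$-seminorm is a norm on $H_{\mathcal{B}}$) together with one Caccioppoli step to obtain an $m$-dimensional space on a single layer with error $\sim m^{-1/d}(1+\sigma/N)$ in the energy norm (Lemma~\ref{lem:3-2}). The residual is again generalized harmonic on the next layer, so the procedure is repeated $N$ times on nested domains $\omega^{1}\supset\cdots\supset\omega^{N+1}$, producing a space of dimension $n=Nm$ with error bounded by a product of $N$ such factors (Lemma~\ref{lem:3-3}); the $k$-dependence enters only through $(1+\sigma/N)^{N}\leq e^{\sigma}$. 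Choosing $m$ and $N$ with $m\sim N^{d}$ (so $n\sim N^{d+1}$) then yields the $e^{-bn^{1/(d+1)}}$ decay. The sharper bound \eqref{eq:3-1} for concentric cubes comes from tracking the exact volumes $|\omega^{j}|^{1/d}=H^{\ast}-2\delta^{\ast}(j-1)/N$ in the product, not from any reflection argument.
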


A few remarks are in order before proceeding to the proof of the theorem.

\begin{rem}\label{rem:3-3-0}
\cm{Estimate \cref{eq:3-1} holds for other (quasi-concentric) regular domains $\omega$ and $\omega^{\ast}$ that satisfy $|\omega|^{1/d} \,\big (|\omega^{\ast}|^{1/d}\big) \simeq {\rm diam}(\omega) \,\big({\rm resp.}\, {\rm diam}(\omega^{\ast})\big)$, e.g., spheres or tetrahedrons. In this case, $H$ and $H^{\ast}$ in \cref{eq:3-1} are replaced by ${\rm diam}(\omega)$ and ${\rm diam}(\omega^{\ast})$. }
\end{rem}
\vspace{-2ex}
\begin{rem}
The proof of \cref{thm:3-1} provides explicit values for $n_{0}$ and $b$:
\begin{equation}\label{eq:3-2}
n_{0} = 2(4e\Theta)^{d}\quad {\rm and}\quad b = (2e\Theta+1/2)^{-d/(d+1)}, 
\end{equation}
\cm{where $\Theta=C\big(\frac{a_{\rm max}}{a_{\rm min}}\big)^{1/2} \frac{|\omega^{\ast}|^{1/d}}{\delta^{\ast}}$ for general $\omega$ and $\omega^{\ast}$, and $\Theta=C\big(\frac{a_{\rm max}}{a_{\rm min}}\big)^{1/2} \frac{H^{\ast}}{\delta^{\ast}}$ for (truncated) concentric cubes, with $C>0$ depending only on $d$.}
\end{rem}
\begin{rem}\label{rem:3-4}
Let us look carefully at the decay rate in \cref{eq:3-1} where an explicit dependence on all the important parameters is available. If $k=0$, $d_{n}(\omega, \omega^{\ast})$ decays nearly exponentially with respect to $n$ and $H/H^{\ast}$ as shown in \cite{ma2021novel}. If $H\sim H^{\ast}\sim k^{-1}$, then $e^{\sigma}=O(1)$ and the decay rate of $n$-width is similar to that for positive definite elliptic problems. In the general case, since $k$ only appears in the $n$-independent factor $e^{\sigma}$, $d_{n}(\omega, \omega^{\ast})$ still decays nearly exponentially with $n$ and the rate is independent of $k$. The decay of $d_{n}(\omega, \omega^{\ast})$ with respect to $H/H^{\ast}$ (keeping $H$ fixed) in the general case is nontrivial and it depends on the relation between $k$ and $n$. For moderate $k$, $d_{n}(\omega, \omega^{\ast})$ decays nearly exponentially with $H/H^{\ast}$, even for small $n$. However, if $k$ is large, the situation is different and we can distinguish two cases:
\begin{itemize}
\item[(i)] for $n$ sufficiently large, $d_{n}(\omega, \omega^{\ast})$ decays nearly exponentially with $H/H^{\ast}$;

\vspace{0.5ex}
\item[(ii)] for small $n$, $d_{n}(\omega, \omega^{\ast})$ first decreases and then stagnates as $H/H^{\ast}\rightarrow 0$.
\end{itemize}
Thus, there exists a \emph{resonance effect} between $k$ and $n$ that influences the decay of $d_{n}(\omega, \omega^{\ast})$ with respect to the size $H^{\ast}$ of the oversampling domain in the high-frequency regime. Numerical results in \cref{sec-6} confirm the presence of this effect.
\end{rem}

The key to the proof of \cref{thm:3-1} is to explicitly construct an $n$-dimensional space $Q(n)\subset H_{DI}^{1}(\omega)$ with the approximation  error decaying nearly exponentially. As in \cite{babuska2011optimal,ma2021novel}, this can be achieved by an iteration argument performed on a series of nested domains. To do this, we need to first establish an auxiliary approximation result, stating that the generalized harmonic spaces can be approximated locally by $m$-dimensional spaces with an explicit algebraic convergence rate with respect to $m$. \cm{As a first step, we give a general approximation result with the approximation error measured in the $L^{2}$ norm (see \cite{ma2022exponential} for its proof)}.
\begin{lemma}\label{lem:3-1}
\cm{Assume that $\Omega$ is a Lipschitz polyhedral domain. Let $\mathcal{D}\subset\mathcal{D}^{\ast}$ be open connected subsets of $\Omega$ with $\delta = {\rm dist}(\mathcal{D},\,\partial \mathcal{D}^{\ast}\setminus\partial \Omega)>0$, and let $S(\mathcal{D}^{\ast})$ be a closed subspace of $H^{1}(\mathcal{D}^{\ast})$. In addition, we assume that the $H^{1}$-seminorm $\Vert \nabla \cdot\Vert_{L^{2}(\mathcal{D}^{\ast})}$ is a norm on $S(\mathcal{D}^{\ast})$ equivalent to the standard $H^{1}$-norm. Then, there exist positive constants $C_{1}$ and $C_{2}$ depending only on $d$, such that for each integer $m\geq C_{1}|\mathcal{D}^{\ast}|\delta^{-d}$, there exists an $m$-dimensional space $\Psi_{m}(\mathcal{D}^{\ast})\subset S(\mathcal{D}^{\ast})$ satisfying
\begin{equation}\label{eq:3-3}
\inf_{\varphi\in \Psi_{m}(\mathcal{D}^{\ast})} \Vert u-\varphi\Vert_{L^{2}(\mathcal{D})}\leq C_{2}m^{-1/d} |\mathcal{D}^{\ast}|^{1/d} \Vert \nabla u \Vert_{L^{2}(\mathcal{D}^{\ast})}\quad \forall u\in S(\mathcal{D}^{\ast}).
\vspace{-1ex}
\end{equation}}
\end{lemma}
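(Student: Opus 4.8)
The plan is to prove Lemma~\ref{lem:3-1} by a partition-of-domain argument combined with a local polynomial (or piecewise-polynomial) approximation on a quasi-uniform grid, adapted to the subspace $S(\mathcal{D}^{\ast})$. First I would cover $\mathcal{D}^{\ast}$ by a shifted lattice of cubes of side length $\simeq\delta$, chosen so that each cube meeting $\mathcal{D}$ is contained, together with a $\delta$-neighbourhood of itself, inside $\mathcal{D}^{\ast}$; the number of such cubes is $\lesssim |\mathcal{D}^{\ast}|\delta^{-d}$, which dictates the threshold $m\geq C_1|\mathcal{D}^{\ast}|\delta^{-d}$. On each cube I would use the standard Bramble--Hilbert / Poincar\'e estimate: a function in $H^1$ can be approximated by its average (or, for higher rates, by a low-degree polynomial) with $L^2$-error on the cube bounded by $(\text{side length})\times\|\nabla u\|_{L^2}$ over a slightly enlarged cube. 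Since we only need the algebraic rate $m^{-1/d}$, approximation by piecewise constants on a grid of $\simeq m$ cells suffices: with $m$ cells the cell size is $\simeq |\mathcal{D}^{\ast}|^{1/d} m^{-1/d}$, giving exactly the claimed bound.

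The subtlety that makes this a genuine lemma rather than a triviality is that the approximating space must be a subspace of $S(\mathcal{D}^{\ast})$ itself, not merely of $H^1(\mathcal{D}^{\ast})$ — the generalized harmonic functions do not contain piecewise constants. The standard device, used in \cite{babuska2011optimal,ma2021novel}, is to first approximate $u$ by a piecewise polynomial $w$ (not in $S$), and then invoke an inf-sup / closed-range argument: because $\|\nabla\cdot\|_{L^2(\mathcal{D}^{\ast})}$ is an equivalent norm on the \emph{closed} subspace $S(\mathcal{D}^{\ast})$, the $\mathcal{A}$-orthogonal (or $H^1$-orthogonal) projection of the finite-dimensional space of piecewise polynomials onto $S(\mathcal{D}^{\ast})$ has dimension at most $m$, and the projection of $u$ stays within $S$ while not increasing the error in the appropriate sense. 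Concretely, one lets $\Psi_m(\mathcal{D}^{\ast})$ be the orthogonal projection onto $S(\mathcal{D}^{\ast})$ of the span of the piecewise-polynomial basis, and estimates $\|u-\Pi_S w\|_{L^2(\mathcal{D})}$ where $\Pi_S$ is that projection restricted suitably; since $u\in S$, one has $\|u-\Pi_S w\|\le\|u-w\|$ plus controllable terms. Here the equivalence of norms on $S$ is exactly what guarantees the projection is bounded and well-behaved, and this is where the hypothesis on $S(\mathcal{D}^{\ast})$ is used.

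The main obstacle, and the step I would spend the most care on, is controlling the constant $C_2$ so that it depends \emph{only on $d$} and not on $\mathcal{D}$, $\mathcal{D}^{\ast}$, or the geometry of the subdomains — this forces the covering/grid construction to be done with cubes of a uniform shape and with overlap bounded purely in terms of $d$, and it forces the Poincar\'e/Bramble--Hilbert constants to be invoked on reference cubes only, transported by scaling. One has to be careful near $\partial\Omega$: the definition $\delta = {\rm dist}(\mathcal{D},\,\partial\mathcal{D}^{\ast}\setminus\partial\Omega)$ allows $\mathcal{D}$ and $\mathcal{D}^{\ast}$ to touch $\partial\Omega$, so the enlarged cubes may stick out of $\mathcal{D}^{\ast}$ only through $\partial\Omega$; since no boundary condition is imposed on $S(\mathcal{D}^{\ast})$ there, one extends $u$ by a bounded Sobolev extension across $\partial\Omega$ (using that $\Omega$ is a Lipschitz polyhedron, hence has the extension property with a dimension-dependent norm on a polyhedral reference configuration), or equivalently works with cubes intersected with $\Omega$ and a Poincar\'e inequality on Lipschitz pieces whose constant is again controlled. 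Once the extension and covering are set up, the rest is the routine scaling argument; since the full details appear in \cite{ma2022exponential}, I would present this as a sketch and cite that reference for the complete proof.
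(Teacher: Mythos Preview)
The paper does not prove this lemma at all; it simply cites \cite{ma2022exponential}. Your covering-by-cubes, Poincar\'e/Bramble--Hilbert, and Sobolev-extension ingredients are exactly the right analytical content, and your treatment of the boundary case (using the polyhedral extension property and the definition of $\delta$ relative to $\partial\mathcal{D}^{\ast}\setminus\partial\Omega$) is correct.

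There is, however, a genuine gap in the step where you force $\Psi_m\subset S(\mathcal{D}^{\ast})$. You propose to take $\Psi_m=\Pi_S(V_m)$ with $\Pi_S$ the $\mathcal{A}$- (or $H^1$-) orthogonal projection onto $S(\mathcal{D}^{\ast})$, and to use $u-\Pi_S w=\Pi_S(u-w)$ for $u\in S$. But $\Pi_S$ is a contraction only in the $\Vert\nabla\cdot\Vert_{L^2(\mathcal{D}^{\ast})}$ norm, not in $\Vert\cdot\Vert_{L^2(\mathcal{D})}$; there is no reason $\Vert\Pi_S(u-w)\Vert_{L^2(\mathcal{D})}\le C\Vert u-w\Vert_{L^2(\mathcal{D})}$ with $C$ independent of $S$ and the geometry. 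The ``controllable terms'' you allude to are not, in fact, controllable by this route.

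The device actually used in \cite{babuska2011optimal,ma2021novel,ma2022exponential} is different and cleaner: one bounds the Kolmogorov $n$-width of the compact embedding $\big(S(\mathcal{D}^{\ast}),\Vert\nabla\cdot\Vert_{L^2(\mathcal{D}^{\ast})}\big)\hookrightarrow L^2(\mathcal{D})$ from above by testing against the $m$-dimensional space of piecewise constants, which is permitted because the $n$-width infimum ranges over subspaces of the \emph{target} $L^2(\mathcal{D})$. The optimal approximation space realizing the $n$-width is then spanned by the first $m$ eigenfunctions of the problem $(\phi,v)_{L^2(\mathcal{D})}=\lambda(\nabla\phi,\nabla v)_{L^2(\mathcal{D}^{\ast})}$ on $S(\mathcal{D}^{\ast})$ (cf.\ \cref{lem:2-1}), and these lie in $S(\mathcal{D}^{\ast})$ by construction. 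An equivalent formulation, closer in spirit to what you wrote, is to take $\Psi_m$ as the $\Vert\nabla\cdot\Vert$-orthogonal complement in $S(\mathcal{D}^{\ast})$ of the kernel of the cell-averaging map $R:S(\mathcal{D}^{\ast})\to\mathbb{C}^m$; then for $u\in S$ the component $u_K\in\ker R$ has vanishing cell averages, so Poincar\'e on each cell gives $\Vert u_K\Vert_{L^2(\mathcal{D})}\le Ch\Vert\nabla u_K\Vert_{L^2(\mathcal{D}^{\ast})}\le Ch\Vert\nabla u\Vert_{L^2(\mathcal{D}^{\ast})}$, the last inequality being where the $\Vert\nabla\cdot\Vert$-orthogonality is used. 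Either way, the hypothesis that $\Vert\nabla\cdot\Vert$ is an equivalent norm on $S$ is used to make $S$ a Hilbert space so that the eigenproblem (or the orthogonal decomposition) is well-posed, not to bound a projection in $L^2$.
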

\begin{rem}\label{rem:3-5}
\cm{If $\partial \Omega$ is $C^{1}$ smooth, \cref{lem:3-1} also holds true with the constants $C_{1}$ and $C_{2}$ depending on $\partial \Omega$ (for the domain $\mathcal{D}^{\ast}$ touching $\partial \Omega$). If $\Omega$ has a general Lipschitz boundary, \cref{lem:3-1} can also be proved for Lipschitz domains $\mathcal{D}^{\ast}$. However, in this case, the constants $C_{1}$ and $C_{2}$ may depend on the shape of $\mathcal{D}^{\ast}$. We refer the reader to \cite[Remark 3.11]{ma2022exponential} for more details of these extensions.}
\end{rem}

Combining \cref{lem:3-1} and the Caccioppoli inequality can give the desired auxiliary approximation result with the approximation error measured in the energy norm. \cm{To this end, we need to assume that the domain $\mathcal{D}^{\ast}$ satisfies the cone condition (see, e.g., \cite[pp.~82]{adams2003sobolev}).}
\begin{lemma}\label{lem:3-2}
\cm{Let $\mathcal{D}$ and $\mathcal{D}^{\ast}$ be open connected subsets of $\Omega$ with $\mathcal{D}\subset\mathcal{D}^{\ast}$ and $\delta = {\rm dist}(\mathcal{D},\partial \mathcal{D}^{\ast} \setminus\partial \Omega)>0$, and let the constants $C_{1}$ and $C_{2}$ be as in \cref{lem:3-1}. In addition, we assume that $\mathcal{D}^{\ast}$ satisfies the cone condition. Then, for each integer $m\geq C_{1}|\mathcal{D}^{\ast}|(\delta/2)^{-d}$, there exits an $m$-dimensional space $\Psi_{m}(\mathcal{D}^{\ast})\subset H_{\mathcal{B}}(\mathcal{D}^{\ast})$ such that for any $u\in H_{\mathcal{B}}(\mathcal{D}^{\ast})$, 
\begin{equation}\label{eq:3-4}
\inf_{\varphi\in \Psi_{m}(\mathcal{D}^{\ast})}\Vert u-\varphi\Vert_{\mathcal{A},\mathcal{D}} \leq C_{2}m^{-1/d}|\mathcal{D}^{\ast}|^{1/d}\,\Big(\frac{a_{\rm max}}{a_{\rm min}}\Big)^{1/2} \Big(\frac{2}{\delta} + \frac{kV_{\rm max}}{a^{1/2}_{\rm max}}\Big)\,\Vert u \Vert_{\mathcal{A},\mathcal{D}^{\ast}}.
\end{equation}}
\end{lemma}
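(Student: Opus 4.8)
The plan is to combine the $L^2$-approximation result of \cref{lem:3-1} with the Caccioppoli-type inequality \cref{eq:2-6} of \cref{thm:2-1}. The key point is to insert an intermediate domain between $\mathcal{D}$ and $\mathcal{D}^{\ast}$ so that one can first approximate in $L^2$ on the larger region and then pass to the energy norm on the smaller region using a cutoff. Concretely, I would set $\mathcal{D}' = \{{\bm x}\in\mathcal{D}^{\ast} : {\rm dist}({\bm x},\mathcal{D}) < \delta/2\}$, so that $\mathcal{D}\subset\mathcal{D}'\subset\mathcal{D}^{\ast}$, ${\rm dist}(\mathcal{D}',\partial\mathcal{D}^{\ast}\setminus\partial\Omega)\geq\delta/2$, and ${\rm dist}(\mathcal{D},\partial\mathcal{D}'\setminus\partial\Omega)\geq\delta/2$. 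Apply \cref{lem:3-1} with the pair $(\mathcal{D}',\mathcal{D}^{\ast})$ and the closed subspace $S(\mathcal{D}^{\ast}) = H_{\mathcal{B}}(\mathcal{D}^{\ast})$ — note the hypothesis that $\Vert\nabla\cdot\Vert_{L^2(\mathcal{D}^{\ast})}$ is a norm on $H_{\mathcal{B}}(\mathcal{D}^{\ast})$ equivalent to the $H^1$-norm holds by \cref{eq:2-3-0} — to obtain, for every $m\geq C_1|\mathcal{D}^{\ast}|(\delta/2)^{-d}$, an $m$-dimensional space $\Psi_m(\mathcal{D}^{\ast})\subset H_{\mathcal{B}}(\mathcal{D}^{\ast})$ with
\begin{equation*}
\inf_{\varphi\in\Psi_m(\mathcal{D}^{\ast})}\Vert u-\varphi\Vert_{L^2(\mathcal{D}')}\leq C_2 m^{-1/d}|\mathcal{D}^{\ast}|^{1/d}\Vert\nabla u\Vert_{L^2(\mathcal{D}^{\ast})}\qquad\forall u\in H_{\mathcal{B}}(\mathcal{D}^{\ast}).
\end{equation*}

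Next I would introduce a cutoff function $\eta\in W^{1,\infty}(\mathcal{D}^{\ast})$ with $\eta\equiv 1$ on $\mathcal{D}$, $\eta\equiv 0$ on $\mathcal{D}^{\ast}\setminus\mathcal{D}'$, $0\leq\eta\leq 1$, and $\Vert\nabla\eta\Vert_{L^\infty(\mathcal{D}^{\ast})}\leq 2/\delta$; existence of such $\eta$ requires that $\mathcal{D}'$ stay away from $\mathcal{D}$ by the fixed distance $\delta/2$, which holds by construction, and for the part of $\partial\mathcal{D}'$ on $\partial\Omega$ one simply does not cut off there, so the condition $\eta=0$ on $\partial\mathcal{D}^{\ast}\cap\Omega$ needed in \cref{thm:2-1} is met. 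Given $u\in H_{\mathcal{B}}(\mathcal{D}^{\ast})$ and its best $L^2(\mathcal{D}')$-approximant $\varphi\in\Psi_m(\mathcal{D}^{\ast})$, both $u$ and $\varphi$ lie in $H_{\mathcal{B}}(\mathcal{D}^{\ast})$, hence so does $u-\varphi$, and $\eta(u-\varphi)\in H^1_0$-type space vanishing on $\partial\mathcal{D}^{\ast}\cap\Omega$; applying \cref{eq:2-6} on $\mathcal{D}^{\ast}$ to $u-\varphi$ gives
\begin{equation*}
\Vert\eta(u-\varphi)\Vert_{\mathcal{A},\mathcal{D}^{\ast}}\leq\Big(a_{\rm max}^{1/2}\tfrac{2}{\delta}+kV_{\rm max}\Big)\Vert u-\varphi\Vert_{L^2(\{\eta\neq 0\})}\leq\Big(a_{\rm max}^{1/2}\tfrac{2}{\delta}+kV_{\rm max}\Big)\Vert u-\varphi\Vert_{L^2(\mathcal{D}')}.
\end{equation*}
Since $\eta\equiv 1$ on $\mathcal{D}$, the left side dominates $\Vert u-\varphi\Vert_{\mathcal{A},\mathcal{D}}$. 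Combining the two displays and factoring out $a_{\rm max}^{1/2}$ from the bracket, together with the trivial bound $\Vert\nabla u\Vert_{L^2(\mathcal{D}^{\ast})}\leq a_{\rm min}^{-1/2}\Vert u\Vert_{\mathcal{A},\mathcal{D}^{\ast}}$, yields exactly the right-hand side of \cref{eq:3-4}, with the threshold $m\geq C_1|\mathcal{D}^{\ast}|(\delta/2)^{-d}$ as stated.

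The main technical obstacle is constructing the cutoff $\eta$ with the claimed Lipschitz bound $2/\delta$ and simultaneously ensuring $\eta$ vanishes on $\partial\mathcal{D}^{\ast}\cap\Omega$ but is \emph{not} forced to vanish on $\partial\mathcal{D}^{\ast}\cap\partial\Omega$ — this is precisely where the cone condition on $\mathcal{D}^{\ast}$ enters, since near the boundary portion lying on $\partial\Omega$ one needs enough regularity of $\partial\mathcal{D}^{\ast}$ to build a Lipschitz function with controlled gradient that respects the Dirichlet constraint inherited from $H_{\mathcal{B}}$; the standard choice $\eta({\bm x}) = \min\{1,\tfrac{2}{\delta}{\rm dist}({\bm x},\mathcal{D}^{\ast}\setminus\mathcal{D}')\}$ is $1$-Lipschitz in the distance and the factor $2/\delta$ comes from the separation, and one must check it interacts correctly with $\partial\Omega$. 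A secondary but routine point is verifying that $H_{\mathcal{B}}(\mathcal{D}^{\ast})$ is genuinely a closed subspace of $H^1(\mathcal{D}^{\ast})$ (it is the kernel of a bounded linear map into a dual space) so that \cref{lem:3-1} applies verbatim, and that replacing $\mathcal{D}'$ by $\mathcal{D}^{\ast}$ in the $L^2$ norm on the right of the Caccioppoli step only enlarges the bound.
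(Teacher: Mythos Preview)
Your proposal is correct and follows essentially the same route as the paper: introduce an intermediate domain at distance $\delta/2$, apply \cref{lem:3-1} to get the $L^2$ bound on that domain, then use a cutoff and the Caccioppoli inequality \cref{eq:2-6} to pass to the energy norm on $\mathcal{D}$. One small correction: the cone condition on $\mathcal{D}^{\ast}$ is \emph{not} needed for the cutoff construction (a distance-based Lipschitz cutoff works regardless), but rather to ensure the compact embedding $H^{1}(\mathcal{D}^{\ast})\hookrightarrow L^{2}(\mathcal{D}^{\ast})$, which in turn yields the Poincar\'e-type inequality making $\Vert\nabla\cdot\Vert_{L^{2}(\mathcal{D}^{\ast})}$ a genuine norm on $H_{\mathcal{B}}(\mathcal{D}^{\ast})$ --- your citation of \cref{eq:2-3-0} covers only the Lipschitz case $\omega_i^{\ast}$, and for general $\mathcal{D}^{\ast}$ this is exactly where the cone condition enters.
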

\vspace{-2ex}
\begin{proof}
\cm{Let $\mathcal{D}_{\delta/2}$ denote an open connected subset of $\Omega$ satisfying $\mathcal{D}\subset \mathcal{D}_{\delta/2}\subset\mathcal{D}^{\ast}$ and ${\rm dist}(\mathcal{D},\,\partial \mathcal{D}_{\delta/2}\setminus\partial \Omega) = {\rm dist}(\mathcal{D}_{\delta/2},\,\partial \mathcal{D}^{\ast}\setminus\partial \Omega) = \delta/2$. Our proof is divided into two steps. First, we shall apply \cref{lem:3-1} on $\mathcal{D}_{\delta/2}$ and $\mathcal{D}^{\ast}$ with $S(\mathcal{D}^{\ast}) = H_{\mathcal{B}}(\mathcal{D}^{\ast})$. To do this, we need to verify that $H_{\mathcal{B}}(\mathcal{D}^{\ast})$ satisfies the required conditions. It is clear that $H_{\mathcal{B}}(\mathcal{D}^{\ast})$ is a closed subspace of $H^{1}(\mathcal{D}^{\ast})$. Moreover, since the domain $\mathcal{D}^{\ast}$ satisfies the cone condition, the embedding $H^{1}(\mathcal{D}^{\ast})\subset L^{2}(\mathcal{D}^{\ast})$ is compact (see, e.g., \cite[Theorem 6.3]{adams2003sobolev}). Hence, similar to \cref{eq:2-3-0}, it can be shown that there exists $C>0$ such that 
\begin{equation}
\Vert u\Vert_{L^{2}(\mathcal{D}^{\ast})}\leq C\Vert \nabla u\Vert_{L^{2}(\mathcal{D}^{\ast})} \quad  \forall u\in H_{\mathcal{B}}(\mathcal{D}^{\ast}),
\end{equation}
and thus $\Vert\nabla\cdot\Vert_{L^{2}(\mathcal{D}^{\ast})}$ is a norm on $H_{\mathcal{B}}(\mathcal{D}^{\ast})$ equivalent to the standard $H^{1}$-norm. Having verified these conditions, we can apply \cref{lem:3-1} to deduce that for each $m\geq C_{1}|\mathcal{D}^{\ast}|(\delta/2)^{-d}$, there exists an $m$-dimensional space $\Psi_{m}(\mathcal{D}^{\ast})\subset H_{\mathcal{B}}(\mathcal{D}^{\ast})$ satisfying
\begin{equation}\label{eq:3-4-0}
    \inf_{\varphi\in \Psi_{m}(\mathcal{D}^{\ast})} \Vert u-\varphi\Vert_{L^{2}(\mathcal{D}_{\delta/2})}\leq C_{2}m^{-1/d} |\mathcal{D}^{\ast}|^{1/d} a_{\rm min}^{-1/2} \,\Vert u \Vert_{\mathcal{A},\mathcal{D}^{\ast}}\quad \forall u\in H_{\mathcal{B}}(\mathcal{D}^{\ast}).
\end{equation}
Next we choose a cut-off function $\eta\in C^{1}(\mathcal{D}_{\delta/2})$ satisfying
\begin{equation}
 \eta = 0\;\;\,\; {\rm on}\;\, \partial \mathcal{D}_{\delta/2}\setminus \partial\Omega,\quad \eta = 1 \;\;\;\, {\rm on}\;\,\mathcal{D},\quad {\rm and}\;\;\quad |\nabla \eta|\leq 2/\delta.
\end{equation}
Note that for any $u\in H_{\mathcal{B}}(\mathcal{D}^{\ast})$ and $\varphi\in \Psi_{m}(\mathcal{D}^{\ast})$, $u-\varphi\in H_{\mathcal{B}}(\mathcal{D}_{\delta/2})$. Applying the Caccioppoli-type inequality \cref{eq:2-6} to $\eta$ and $u-\varphi$ on $\mathcal{D}_{\delta/2}$ and combining the result with \cref{eq:3-4-0}, we get the desired estimate \cref{eq:3-4}.}
\end{proof}


\Cref{lem:3-2} is the starting point of the iteration argument in which the approximation result \cref{eq:3-4} is applied recursively. \cm{Recall that $\delta^{\ast}= {\rm dist}(\mathcal{\omega},\partial \mathcal{\omega}^{\ast} \setminus\partial \Omega)$. For an integer $N\geq 2$, we denote by $\{\omega^{j}\}_{j=1}^{N+1}$ a family of nested domains satisfying $\omega=\omega^{N+1}\subset \omega^{N}\subset\cdots\subset\omega^{1} = \omega^{\ast}$ and ${\rm dist}(\omega^{j},\partial \omega^{j+1}\setminus \partial \Omega) = \delta^{\ast}/N$. The intermediate domains can be constructed iteratively as follows:
\begin{equation}
    \omega^{j} = \bigcup_{{\bm x}\in \omega^{j+1}} B({\bm x}, \delta^{\ast}/N)\cap \Omega, \qquad j=N,N-1,\ldots,2,
\end{equation}
where $B({\bm x}, \delta^{\ast}/N)$ denotes the ball centered at ${\bm x}$ with radius $\delta^{\ast}/N$. We see that $\{\omega^{j}\}_{j=2}^{N}$ all satisfy the cone condition since they are unions of uniform balls. If $\omega$ and $\omega^{\ast}$ are (truncated) concentric cubes of side lengths $H$ and $H^{\ast}$, $\{\omega^{j}\}_{j=2}^{N}$ can simply be chosen to be (truncated) concentric cubes of side lengths $H^{\ast}-2\delta^{\ast} (j-1)/N$.} Let $n=Nm$ and define
\begin{equation}\label{eq:3-5}
\Psi(n,\omega,\omega^{\ast}) = \Psi_{m}(\omega^{1}) + \cdots+\Psi_{m}(\omega^{N}),
\end{equation}
where $\Psi_{m}(\omega^{j})$ are given by \cref{lem:3-2}. The following lemma shows that $\Psi(n,\omega,\omega^{\ast})$ can deliver a sharper approximation result than \cref{eq:3-4}. \cm{Note that this result holds for both cases where $\omega$ is an interior subdomain or a boundary subdomain.}
\begin{lemma}\label{lem:3-3}
\cm{ Let $\chi$ be the partition of unity function supported on $\omega$, and let the constants $C_{1}$ and $C_{2}$ be as in \cref{lem:3-1}. Moreover, let $m$ and $N$ satisfy $m\geq C_{1} |\omega^{\ast}| (2N/\delta^{\ast})^{d}$.
Then, for any $u\in H_{\mathcal{B}}(\omega^{\ast})$,
\begin{equation}\label{eq:3-6}
\inf_{\varphi\in \Psi(n,\omega,\omega^{\ast})} \frac{\Vert \chi(u - \varphi) \Vert_{\mathcal{A},\omega,k}}{\Vert u \Vert_{\mathcal{A},\omega^{\ast}}} \leq 2e^{\sigma} \xi^{N} \prod_{j=1}^{N} |\omega^{j}|^{1/d},
\vspace{-1ex}
\end{equation}
where $\sigma = k\delta^{\ast}V_{\rm max}/(2a^{1/2}_{\rm max})$ and $\xi$ is given by
\begin{equation}\label{eq:3-7}
\xi = \xi(N,m)=2C_{2}Nm^{-1/d}\Big(\frac{a_{\rm max}}{a_{\rm min}}\Big)^{1/2}\frac{1}{\delta^{\ast}}.
\end{equation}}
\vspace{-2ex}
\end{lemma}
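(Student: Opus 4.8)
The plan is to prove \cref{eq:3-6} by iterating the single-step approximation result of \cref{lem:3-2} across the nested family $\omega^{\ast} = \omega^{1}\supset\omega^{2}\supset\cdots\supset\omega^{N+1}=\omega$, exploiting that ${\rm dist}(\omega^{j},\partial\omega^{j+1}\setminus\partial\Omega) = \delta^{\ast}/N$ for each $j$, and then converting the final $L^{2}$/energy estimate on $\omega$ into the weighted norm $\Vert\chi\,\cdot\,\Vert_{\mathcal{A},\omega,k}$ via the Caccioppoli-type inequality \cref{eq:2-6}. Concretely, given $u\in H_{\mathcal{B}}(\omega^{\ast})$, I would first apply \cref{lem:3-2} on the pair $(\omega^{2},\omega^{1})$ (with $\delta$ replaced by $\delta^{\ast}/N$) to extract $\varphi_{1}\in\Psi_{m}(\omega^{1})$ with $\Vert u-\varphi_{1}\Vert_{\mathcal{A},\omega^{2}}\le \widetilde{\xi}\,\Vert u\Vert_{\mathcal{A},\omega^{1}}$, where $\widetilde{\xi} = C_{2}m^{-1/d}|\omega^{1}|^{1/d}(a_{\rm max}/a_{\rm min})^{1/2}\big(2N/\delta^{\ast}+kV_{\rm max}/a_{\rm max}^{1/2}\big)$. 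Since $u-\varphi_{1}\in H_{\mathcal{B}}(\omega^{2})$, I apply \cref{lem:3-2} again on $(\omega^{3},\omega^{2})$ to the function $u-\varphi_{1}$, obtaining $\varphi_{2}\in\Psi_{m}(\omega^{2})$ with $\Vert u-\varphi_{1}-\varphi_{2}\Vert_{\mathcal{A},\omega^{3}}\le \widetilde{\xi}^{\,2}\Vert u\Vert_{\mathcal{A},\omega^{1}}$ — here $|\omega^{2}|^{1/d}\le|\omega^{1}|^{1/d}$ keeps the bound clean, though one may also track the $|\omega^{j}|^{1/d}$ factors individually to get the sharper product form in \cref{eq:3-6}. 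Continuing through $j=1,\dots,N$ yields $\varphi := \varphi_{1}+\cdots+\varphi_{N}\in\Psi(n,\omega,\omega^{\ast})$ with $\Vert u-\varphi\Vert_{\mathcal{A},\omega^{N+1}} = \Vert u-\varphi\Vert_{\mathcal{A},\omega}\le \big(\prod_{j=1}^{N}\xi_{j}\big)\Vert u\Vert_{\mathcal{A},\omega^{\ast}}$, where $\xi_{j}$ carries the $|\omega^{j}|^{1/d}$ factor.

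The second half is to pass from $\Vert u-\varphi\Vert_{\mathcal{A},\omega}$ to $\Vert\chi(u-\varphi)\Vert_{\mathcal{A},\omega,k}$. Since $\chi$ is supported in $\omega$ and $u-\varphi\in H_{\mathcal{B}}(\omega)$ (it lies in the generalized harmonic space on each $\omega^{j}$, hence on $\omega$), one wants to bound $\Vert\chi(u-\varphi)\Vert_{\mathcal{A},\omega,k}$ by $(\Vert\nabla\cdot\Vert + k\Vert\cdot\Vert)$-type quantities of $u-\varphi$ on $\omega$. The natural route is to run one extra Caccioppoli step on a thin collar between $\omega=\omega^{N+1}$ and $\omega^{N}$: actually this is why the nested family is taken with $N+1$ layers, so that \cref{eq:3-6} is really a composition of $N$ applications of \cref{lem:3-2} plus a last application of the Caccioppoli inequality \cref{eq:2-6} with the cut-off $\eta$ chosen so that $\eta\equiv\chi$ near $\omega$ — or more simply, absorb $\chi$ into the very last \cref{lem:3-2} step by noting $\Vert\chi(u-\varphi)\Vert_{\mathcal{A},\omega,k}\le\big(a_{\rm max}^{1/2}\Vert\nabla\chi\Vert_{L^{\infty}}+\sqrt{2}\,kV_{\rm max}\Vert\chi\Vert_{L^{\infty}}\big)\Vert u-\varphi\Vert_{L^{2}(\omega)}$ via \cref{eq:2-6} applied in the $\mathcal{A},\omega,k$-norm variant, using $\Vert\nabla\chi\Vert_{L^{\infty}(\omega)}\le C_{\chi}/{\rm diam}(\omega)\lesssim 1/\delta^{\ast}$ (up to the cover geometry) and $\Vert\chi\Vert_{L^{\infty}}\le 1$. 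The exponential-in-$k$ factor $e^{\sigma}$ with $\sigma = k\delta^{\ast}V_{\rm max}/(2a_{\rm max}^{1/2})$ emerges because each of the $N$ steps contributes a factor $\big(1 + \tfrac{kV_{\rm max}}{a_{\rm max}^{1/2}}\cdot\tfrac{\delta^{\ast}}{2N}\big)$ from the $kV_{\rm max}/a_{\rm max}^{1/2}$ term in \cref{eq:3-4} relative to the $2N/\delta^{\ast}$ term, and $\big(1+\tfrac{x}{2N}\big)^{N}\le e^{x/2}$ with $x = k\delta^{\ast}V_{\rm max}/a_{\rm max}^{1/2}$ gives exactly $e^{\sigma}$; the leftover constant $2$ absorbs the last-step bookkeeping and the $\sqrt{2}$ from the $\mathcal{A},k$-norm.

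The hypothesis $m\ge C_{1}|\omega^{\ast}|(2N/\delta^{\ast})^{d}$ is precisely what is needed so that the admissibility condition $m\ge C_{1}|\omega^{j}|(\,(\delta^{\ast}/N)/2\,)^{-d} = C_{1}|\omega^{j}|(2N/\delta^{\ast})^{d}$ of \cref{lem:3-2} holds at every level $j$, using $|\omega^{j}|\le|\omega^{\ast}|$. I expect the main obstacle to be organizing the telescoping cleanly: one must be careful that at step $j$ the function $u-\varphi_{1}-\cdots-\varphi_{j-1}$ still lies in $H_{\mathcal{B}}(\omega^{j})$ (true because each $\varphi_{i}\in\Psi_{m}(\omega^{i})\subset H_{\mathcal{B}}(\omega^{i})$ and $\omega^{j}\subset\omega^{i}$ for $i\le j$, and $H_{\mathcal{B}}$ restricts to $H_{\mathcal{B}}$ on subdomains), and that the geometric constants — in particular that the intermediate $\omega^{j}$ satisfy the cone condition required by \cref{lem:3-2}, which holds by the explicit construction as unions of balls of radius $\delta^{\ast}/N$ — are uniform in $j$. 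Collecting $\xi = 2C_{2}Nm^{-1/d}(a_{\rm max}/a_{\rm min})^{1/2}/\delta^{\ast}$ from the $2N/\delta^{\ast}$ part of each factor and the $e^{\sigma}$ from the $k$-part, one arrives at $\prod_{j=1}^{N}\xi_{j} = \xi^{N}\prod_{j=1}^{N}|\omega^{j}|^{1/d}$ up to the $e^{\sigma}$ and the overall factor $2$, which is \cref{eq:3-6}.
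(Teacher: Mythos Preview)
Your proposal is essentially correct and follows the paper's approach: iterate \cref{lem:3-2} across the nested domains, extract the factor $(1+\sigma/N)$ at each step so that $(1+\sigma/N)^{N}\le e^{\sigma}$, and in the final layer replace the generic cut-off by $\chi$ via \cref{eq:2-6}. One clarification on the counting: the paper runs only $N-1$ full applications of \cref{lem:3-2} (ending with a bound on $\Vert u-\sum_{j<N}v^{j}_{u}\Vert_{\mathcal{A},\omega^{N}}$), and the $N$-th step is exactly your ``absorb $\chi$ into the very last step'' option---\cref{lem:3-1} on $\omega^{N}$ followed by Caccioppoli with $\eta=\chi$---rather than a full \cref{lem:3-2} step plus an extra Caccioppoli afterward. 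Your first-stated alternative (run all $N$ steps of \cref{lem:3-2} down to $\omega=\omega^{N+1}$ and then convert $\Vert u-\varphi\Vert_{\mathcal{A},\omega}$ to $\Vert\chi(u-\varphi)\Vert_{\mathcal{A},\omega,k}$) does not close as written, since \cref{eq:2-6} bounds $\Vert\eta v\Vert_{\mathcal{A}}$ by $\Vert v\Vert_{L^{2}}$, not by $\Vert v\Vert_{\mathcal{A}}$, and no additional collar remains.
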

\begin{proof}
\cm{First of all, we note that all the intermediate domains $\{\omega^{j}\}_{j=2}^{N}$ constructed above and the oversampling domain $\omega^{\ast}$ satisfy the cone condition (recall that $\omega^{\ast}$ is a Lipschitz domain).} Using \cref{lem:3-2} with $\mathcal{D}^{\ast} = \omega^{1}=\omega^{\ast}$ and $\mathcal{D}=\omega^{2}$ and noting that ${\rm dist}(\omega^{2}, \partial \omega^{1}\setminus\partial \Omega) = \delta^{\ast}/N$, we see that there exists a $v_{u}^{1}\in \Psi_{m}(\omega^{1})$ such that
\begin{equation}\label{eq:3-8}
\Vert u - v_{u}^{1}\Vert_{\mathcal{A},\omega^{2}} \leq \xi (1+\sigma/N) |\omega^{1}|^{1/d} \Vert u \Vert_{\mathcal{A},\omega^{1}}.
\end{equation}
Since $u - v_{u}^{1}\in H_{\mathcal{B}}(\omega^{2})$, we can apply \cref{lem:3-2} again and combine the result with \cref{eq:3-8} to find a $v_{u}^{2}\in \Psi_{m}(\omega^{2})$ satisfying
\begin{equation}\label{eq:3-9}
\Vert u - v_{u}^{1}-v_{u}^{2}\Vert_{\mathcal{A},\omega^{3}} \leq \xi^{2} (1+\sigma/N)^{2} |\omega^{1}|^{1/d} |\omega^{2}|^{1/d}\,\Vert u\Vert_{\mathcal{A},\omega^{1}}.
\end{equation}
Repeating successively the same argument for $N-3$ times, we see that there exist $v_{u}^{j}\in \Psi_{m}(\omega^{j})$, $j=1,2\cdots,N-1$, such that
\begin{equation}\label{eq:3-10}
\Big \Vert u - \sum_{j=1}^{N-1}v_{u}^{j} \Big\Vert_{\mathcal{A},\omega^{N}} \leq \xi^{N-1} (1+\sigma/N)^{N-1} \Big(\prod_{j=1}^{N-1} |\omega^{j}|^{1/d}\Big) \,\Vert u \Vert_{\mathcal{A},\omega^{1}}.
\end{equation}
Finally, combining \cref{lem:3-1} and the Caccioppoli-type inequality \cref{eq:2-6} with $\eta = \chi$, there exists a $v_{u}^{N}\in \Psi_{m}(\omega^{N})$ such that
\begin{equation}\label{eq:3-11}
\begin{array}{lll}
{\displaystyle  \Big\Vert \chi\big(u - \sum_{j=1}^{N}v_{u}^{j}\big) \Big\Vert_{\mathcal{A},\omega,k} \leq C_{2}m^{-1/d} |\omega^{N}|^{1/d}a^{-1/2}_{\rm min}  }\\[3mm]
{\displaystyle \qquad \quad  \cdot \Big(a^{1/2}_{\rm max}\Vert\nabla\chi\Vert_{L^{\infty}(\omega)} + 2kV_{\rm max}\Big) \Big\Vert u -  \sum_{j=1}^{N-1}v_{u}^{j} \Big\Vert_{\mathcal{A},\omega^{N}}  }\\[3mm]
{\displaystyle \quad \leq 2\xi |\omega^{N}|^{1/d} \Big(\Vert\nabla\chi\Vert_{L^{\infty}(\omega)}\delta^{\ast} /({4N}) +\sigma/N\Big) \Big\Vert u - \sum_{j=1}^{N-1}v_{u}^{j} \Big\Vert_{\mathcal{A},\,\omega^{N}}.}
\end{array}
\end{equation}
Without loss of generality, we assume that $\Vert \nabla\chi\Vert_{L^{\infty}(\omega)} \leq 4N/\delta^{\ast}$. It follows from \cref{eq:3-10}, \cref{eq:3-11}, and the inequality $(1+\sigma/N)^{N}\leq e^{\sigma}$ that
\begin{equation}\label{eq:3-12}
\begin{array}{lll}
{\displaystyle  \Big\Vert \chi\big(u - \sum_{j=1}^{N}v_{u}^{j}\big) \Big\Vert_{\mathcal{A},\omega,k} \leq 2 (1+\sigma/N)^{N} \xi^{N} \Big(\prod_{j=1}^{N}{|\omega^{j}|}^{1/d}\Big) \Vert u \Vert_{\mathcal{A},\omega^{1}} }\\[3mm]
{\displaystyle \qquad \qquad \qquad \qquad \quad \;\;\leq 2e^{\sigma}\xi^{N} \Big(\prod_{j=1}^{N}{|\omega^{j}|}^{1/d}\Big) \Vert u \Vert_{\mathcal{A},\omega^{1}}.}
\end{array}
\end{equation}
Since $\sum_{j=1}^{N}v_{u}^{j}\in \Psi(n,\omega,\omega^{\ast})$ and $\omega^{1} = \omega^{\ast}$, \cref{eq:3-7} follows immediately from \cref{eq:3-12}.
\end{proof}
\begin{rem}\label{rem:3-6}
\cm{By the remark after \cref{lem:3-1}, if $\partial \Omega$ is $C^{1}$ smooth, \cref{lem:3-3} also holds true with the constants $C_{1}$ and $C_{2}$ depending on $\partial \Omega$ (for boundary subdomains). In the case of a general Lipschitz boundary, if all the $\omega^{k}$ are Lipschitz domains, the proof of \cref{lem:3-3} proceeds similarly as above. However, the constants $C_{1}$ and $C_{2}$ in each application of \cref{lem:3-2} above may depend on the shape of the associated $\omega^{k}$ and thus could be different. If the nested domains are shape-regular (regardless of the parts intersecting $\partial \Omega$), e.g., truncated concentric cubes, it is possible to prove that these different constants can be bounded above via a sophisticated analysis (hence, \cref{lem:3-3} holds true). We will address this case in future works.}
\end{rem}

Now we are in a position to prove \cref{thm:3-1}.
\begin{proof}[Proof of \cref{thm:3-1}] 
Let $Q(n) =\big\{\chi v\,:\,v\in \Psi(n,\omega,\omega^{\ast})\big\} \subset H^{1}_{DI}(\omega)$. It follows from \cref{lem:3-3} that 
\begin{equation}\label{eq:3-13}
\sup_{u\in H_{\mathcal{B}}(\omega^{\ast})}\inf_{\varphi \in Q(n)}\frac{\Vert \chi u - \varphi\Vert_{\mathcal{A},\omega,k}}{\Vert u  \Vert_{\mathcal{A},\omega^{\ast}}} \leq 2e^{\sigma} \xi^{N} \prod_{j=1}^{N}{|\omega^{j}|}^{1/d},
\end{equation}
where $\sigma = k\delta^{\ast}V_{\rm max}/(2a^{1/2}_{\rm max})$ and $\xi$ is given in \cref{eq:3-7}. \cm{We first assume that $\omega$ and $\omega^{\ast}$ are general domains (including the boundary subdomain case). Since for $j=1,\ldots,N$, $|\omega^{j}| \leq |\omega^{\ast}|$, we simply have
\begin{equation}\label{eq:3-13-0}
\sup_{u\in H_{\mathcal{B}}(\omega^{\ast})}\inf_{\varphi \in Q(n)}\frac{\Vert \chi u - \varphi\Vert_{\mathcal{A},\omega,k}}{\Vert u  \Vert_{\mathcal{A},\omega^{\ast}}} \leq 2e^{\sigma} \big(\xi |\omega^{\ast}|^{1/d}\big)^{N}.
\end{equation}
Denoting by 
\begin{equation}\label{eq:3-14}
\Theta=C\Big(\frac{a_{\rm max}}{a_{\rm min}}\Big)^{1/2} \frac{|\omega^{\ast}|^{1/d}}{\delta^{\ast}},\;\;\;n_{0} = 2(4e\Theta)^{d},\;\;\; b = (2e\Theta+1/2)^{-d/(d+1)},
\end{equation}
where $C=2\max\{C_{1}, C_{2}\}$, and using a similar argument as in the proof of Theorem 3.6 in \cite{ma2021novel} by choosing $m$ and $N$ such that 
\begin{equation}\label{eq:3-14-0}
\big(e\Theta(N+1)^{2}/N\big)^{d}\leq m < \big(1+e\Theta(N+1)^{2}/N\big)^{d},
\end{equation}}
\cm{we can prove that for any $n>n_{0}$,
\begin{equation}\label{eq:3-14-1}
\big(\xi |\omega^{\ast}|^{1/d}\big)^{N} \leq e^{-1}e^{-bn^{1/(d+1)}}.
\end{equation}}
Inserting \cref{eq:3-14-1} into \cref{eq:3-13-0} and recalling the definition of the $n$-width, we prove \cref{thm:3-1} for the case of general domains $\omega$ and $\omega^{\ast}$.

\cm{If $\omega$ and $\omega^{\ast}$ are concentric cubes or truncated concentric cubes (in the boundary subdomain case) of side lengths $H$ and $H^{\ast}$, we can prove a sharper bound for \cref{eq:3-13}. In this case, as previously noted, we can choose $\{\omega^{j}\}_{j=2}^{N}$ to be (truncated) concentric cubes of side lengths $H^{\ast}-2\delta^{\ast} (j-1)/N$ with $\delta^{\ast}=(H^{\ast}-H)/2$. Hence, we have $|\omega^{j}|^{1/d} = H^{\ast}-2\delta^{\ast} (j-1)/N$ in the interior subdomain case and $|\omega^{j}|^{1/d} \leq H^{\ast}-2\delta^{\ast} (j-1)/N$ in the boundary subdomain case. It follows that
\begin{equation}\label{eq:3-14-2}
\prod_{j=1}^{N}{|\omega^{j}|}^{1/d} \leq \prod_{j=1}^{N} \big(H^{\ast}-2\delta^{\ast} (j-1)/N\big) = (H^{\ast})^{N} \prod_{j=1}^{N} \Big(1-\frac{2(j-1)\delta^{\ast}}{NH^{\ast}}\Big).
\end{equation}
We can easily see that a similar result as \cref{eq:3-14-2} holds if $\omega$ and $\omega^{\ast}$ are spheres or other (quasi-concentric) regular domains that satisfy $|\omega|^{1/d} \,\big (|\omega^{\ast}|^{1/d}\big) \simeq {\rm diam}(\omega)$ $\big({\rm resp.}\, {\rm diam}(\omega^{\ast})\big)$.} 
Combining \cref{eq:3-14-2} and \cref{eq:3-13} leads to
\begin{equation}\label{eq:3-14-3}
\sup_{u\in H_{\mathcal{B}}(\omega^{\ast})}\inf_{\varphi \in Q(n)}\frac{\Vert \chi u - \varphi\Vert_{\mathcal{A},\omega,k}}{\Vert u  \Vert_{\mathcal{A},\omega^{\ast}}} \leq 2e^{\sigma} \big(\xi H^{\ast}\big)^{N} \prod_{j=1}^{N} \Big(1-\frac{2(j-1)\delta^{\ast}}{NH^{\ast}}\Big).
\end{equation}
Defining $\Theta$, $n_{0}$, and $b$ similarly as above (with $|\omega^{\ast}|^{1/d}$ in $\Theta$ replaced by $H^{\ast}$), we can use a similar proof as that of \cref{eq:3-14-1} and Lemma 3.14 of \cite{ma2021novel} to prove that
\begin{equation}\label{eq:3-15}
(\xi H^{\ast})^{N} \prod_{j=1}^{N} \Big(1-\frac{2(j-1)\delta^{\ast}}{NH^{\ast}}\Big)\leq e^{-1}e^{-bn^{1/(d+1)}}e^{-\rho(H/H^{\ast})bn^{{1}/{(d+1)}}},\quad \forall n>n_{0},
\end{equation}
where $\rho(s) = 1+{s\log(s)}/{(1-s)}$. Inserting \cref{eq:3-15} into \cref{eq:3-14-3} yields the desired estimate \cref{eq:3-1}, and the proof of \cref{thm:3-1} is complete.
\end{proof}

\cm{We conclude this subsection by briefly discussing the influence of the partition of unity functions on the local approximation errors. Recall that in the proof of \cref{lem:3-3}, to simplify the presentation, we assumed that $\Vert \nabla\chi\Vert_{L^{\infty}(\omega)} \leq 4N/\delta^{\ast}$ (it typically holds true in practice). In general, $\Vert \nabla\chi\Vert_{L^{\infty}(\omega)}$ only enters the upper bound for the $n$-width as a factor in front of exponentially decaying terms. Thus, the decay rate of the $n$-width with respect to $n$, as well as other important parameters are independent of the choice of the partition of unity function.}

\subsection{Global error estimates} In this subsection, we first derive the global approximation error estimate and then establish a quasi-optimal convergence rate for the method under some assumptions. For convenience, we define
\begin{equation}\label{eq:4-1}
\begin{array}{lll}
{\displaystyle d_{\mathtt{max}} = \max_{i=1,\cdots,M}d_{n_i}(\omega_i,\omega_{i}^{\ast}),\quad C_{\mathtt{max}}(k) = \max_{i=1,\cdots,M} C^{i}_{{\rm stab}}(k),}\\[2mm]
{\displaystyle \qquad \qquad {\rm and}\quad H^{\ast}_{\mathtt{max}} = \max_{i=1,\cdots,M} {\rm diam}\,(\omega_{i}^{\ast}),}
\end{array}
\end{equation}
where $C^{i}_{{\rm stab}}(k)$ are the stability constants defined in \cref{ass:2-1}. Furthermore, we assume that the oversampling domains $\{ \omega_{i}^{\ast}\}_{i=1}^{M}$ satisfy a similar pointwise overlap condition as $\{ \omega_{i}\}_{i=1}^{M}$:
\begin{equation}\label{eq:4-1-0}
\exists \,\zeta^{\ast}\in \mathbb{N}\quad\forall {\bm x}\in\Omega \quad {\rm card}\{i\;|\;{\bm x}\in \omega^{\ast}_{i} \}\leq\zeta^{\ast}.
\end{equation}

\begin{lemma}\label{lem:4-1}
Let $u^{p}$ and $S_{n}(\Omega)$ be the global particular function and the trial space for the continuous MS-GFEM, and let  $u^{\mathdutchcal{e}}$ be the solution of the problem \cref{eq:1-2}. Then, there exists a $\varphi\in u^{p} + S_{n}(\Omega)$ such that
\begin{equation}\label{eq:4-2}
\Vert u^{\mathdutchcal{e}}-\varphi \Vert_{\mathcal{A},k}\leq \sqrt{2\zeta \zeta^{\ast}} d_{\mathtt{max}} \big(C_{\rm stab}(k) + \sqrt{2} C_{\mathtt{max}}(k) \big) (\Vert f\Vert_{L^{2}(\Omega)} + \Vert g\Vert_{L^{2}(\Gamma_{R})}),
\end{equation}
where $C_{\rm stab}(k)$ is the stability constant defined in \cref{ass:1-2}.
\end{lemma}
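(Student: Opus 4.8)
The plan is to glue the local best-approximants supplied by \cref{thm:2-2} into a global one via the partition of unity, and then to control the global energy-norm error by combining the local bounds through the finite-overlap conditions \cref{eq:2-0} and \cref{eq:4-1-0}. For each $i$ I would choose $\varphi_i \in u^p_i + S_{n_i}(\omega_i)$ attaining the infimum in \cref{eq:2-20} (possible since $u^p_i + S_{n_i}(\omega_i)$ is a finite-dimensional affine set), and set $\varphi := \sum_{i=1}^M \chi_i \varphi_i$. Since $\varphi = u^p + \sum_i \chi_i(\varphi_i - u^p_i)$ with $\varphi_i - u^p_i \in S_{n_i}(\omega_i)$, we have $\varphi \in u^p + S_n(\Omega)$. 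Writing $e_i := u^{\mathdutchcal{e}} - \varphi_i \in H^1(\omega_i)$ and using $\sum_i \chi_i \equiv 1$, it follows that $u^{\mathdutchcal{e}} - \varphi = \sum_i \chi_i e_i$, with each summand supported in $\omega_i$.

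The central estimate is the passage from this sum to a sum of local errors. For the first-order term I would apply the Cauchy--Schwarz inequality with respect to the $A(\bm x)$-weighted inner product on $\mathbb{R}^d$, and for the zeroth-order term the standard Cauchy--Schwarz inequality; since by \cref{eq:2-0} at most $\zeta$ of the $\chi_i$ are nonzero at any point, these give pointwise $A\nabla(u^{\mathdutchcal{e}}-\varphi)\cdot\overline{\nabla(u^{\mathdutchcal{e}}-\varphi)} \le \zeta \sum_i A\nabla(\chi_i e_i)\cdot\overline{\nabla(\chi_i e_i)}$ and $k^2 V^2 |u^{\mathdutchcal{e}}-\varphi|^2 \le \zeta \sum_i k^2 V^2 \chi_i^2 |e_i|^2$. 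Integrating over $\Omega$ and using that $\chi_i e_i$ vanishes outside $\omega_i$ collapses each term to the local norm appearing in \cref{eq:2-20}, yielding $\|u^{\mathdutchcal{e}}-\varphi\|_{\mathcal{A},k}^2 \le \zeta \sum_i \|\chi_i e_i\|_{\mathcal{A},\omega_i,k}^2$. Inserting \cref{eq:2-20} and the definition of $d_{\mathtt{max}}$ then gives $\|u^{\mathdutchcal{e}}-\varphi\|_{\mathcal{A},k}^2 \le \zeta\, d_{\mathtt{max}}^2 \sum_i \|u^{\mathdutchcal{e}}-\psi_i\|_{\mathcal{A},\omega_i^\ast}^2$.

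It remains to bound $\sum_i \|u^{\mathdutchcal{e}}-\psi_i\|_{\mathcal{A},\omega_i^\ast}^2$. Using $\|\cdot\|_{\mathcal{A},\omega_i^\ast} \le \|\cdot\|_{\mathcal{A},\omega_i^\ast,k}$, the triangle inequality and $(a+b)^2 \le 2a^2+2b^2$, I would split this into a contribution from $u^{\mathdutchcal{e}}$ — for which \cref{eq:4-1-0} gives $\sum_i \|u^{\mathdutchcal{e}}\|_{\mathcal{A},\omega_i^\ast,k}^2 \le \zeta^\ast \|u^{\mathdutchcal{e}}\|_{\mathcal{A},k}^2$ and \cref{ass:1-2} controls $\|u^{\mathdutchcal{e}}\|_{\mathcal{A},k}$ by $C_{\rm stab}(k)(\|f\|_{L^2(\Omega)}+\|g\|_{L^2(\Gamma_R)})$ — and a contribution from the $\psi_i$, where \cref{ass:2-1} bounds $\|\psi_i\|_{\mathcal{A},\omega_i^\ast,k}$ by $C_{\mathtt{max}}(k)(\|f\|_{L^2(\omega_i^\ast)}+\|g\|_{L^2(\partial\omega_i^\ast\cap\Gamma_R)})$ and summing squares over $i$ — again via \cref{eq:4-1-0}, together with the analogous finite overlap of the traces $\partial\omega_i^\ast\cap\Gamma_R$ on $\Gamma_R$ — returns $2\zeta^\ast C_{\mathtt{max}}(k)^2(\|f\|_{L^2(\Omega)}^2+\|g\|_{L^2(\Gamma_R)}^2)$. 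Collecting everything, using $\|f\|^2+\|g\|^2 \le (\|f\|+\|g\|)^2$, taking a square root and applying $\sqrt{c^2+2C^2} \le c + \sqrt{2}\,C$ produces exactly the constant $\sqrt{2\zeta\zeta^\ast}\, d_{\mathtt{max}}(C_{\rm stab}(k)+\sqrt{2}\,C_{\mathtt{max}}(k))$.

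I expect the second paragraph to be the main obstacle: one must keep each product $\chi_i e_i$ intact when estimating $\nabla(u^{\mathdutchcal{e}}-\varphi)$, rather than expanding into $e_i\nabla\chi_i + \chi_i\nabla e_i$, so that the bound matches the exact local energy norm in \cref{thm:2-2}; this is precisely what makes the matrix-weighted Cauchy--Schwarz the right tool. A minor technical caveat is that the overlap of the boundary pieces $\partial\omega_i^\ast\cap\Gamma_R$ on $\Gamma_R$ is not literally part of \cref{eq:4-1-0}, but holds for any reasonable oversampling cover, possibly after slightly enlarging $\zeta^\ast$.
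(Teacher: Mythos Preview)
Your proposal is correct and follows essentially the same approach as the paper: choose local minimizers from \cref{thm:2-2}, glue via the partition of unity, pass from the global to the sum of local energy norms using the overlap bound \cref{eq:2-0}, and then control $\sum_i\|u^{\mathdutchcal{e}}-\psi_i\|_{\mathcal{A},\omega_i^\ast}^2$ via the triangle inequality, the stability estimates in \cref{ass:1-2} and \cref{ass:2-1}, and the oversampling overlap \cref{eq:4-1-0}. The paper compresses the final step by citing \cite{ma2021novel}, but the arithmetic you spell out (including the $\sqrt{c^2+2C^2}\le c+\sqrt{2}C$ step) reproduces exactly the stated constant; your caveat about the boundary-trace overlap is the same point the paper implicitly absorbs into that citation.
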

\begin{proof}
Using \cref{eq:2-20} and \cref{ass:2-1}, we see that on each subdomain $\omega_{i}$, there exists a $\varphi_{i}\in u_{i}^{p} + S_{n_i}(\omega_i)$ such that
\begin{equation}\label{eq:4-3}
\begin{array}{lll}
{\displaystyle \big\Vert \chi_{i}(u^{\mathdutchcal{e}} - \varphi_{i})\big\Vert_{\mathcal{A},\omega_i,k}\leq d_{n_i}(\omega_i,\omega_{i}^{\ast}) \,\Vert u^{{\mathdutchcal{e}}}- \psi_{i}\Vert_{\mathcal{A},\omega_{i}^{\ast}} }\\[2mm]
{\displaystyle \leq d_{n_i}(\omega_i,\omega_{i}^{\ast})\big(\Vert u^{{\mathdutchcal{e}}}\Vert_{\mathcal{A},\omega_{i}^{\ast}} + C_{\rm stab}^{i}(k)(\Vert f \Vert_{L^{2}(\omega_{i}^{\ast})} + \Vert g\Vert_{L^{2}(\partial \omega^{\ast}_{i} \cap \Gamma_{R})})\big).}
\end{array}
\end{equation}
Let $\varphi = \sum_{i=1}^{M}\chi_{i} \varphi_{i} \in u^{p} + S_{n}(\Omega)$. It follows from \cref{eq:2-0} that
\begin{equation}\label{eq:4-4}
\Vert u^{\mathdutchcal{e}}- \varphi \Vert^{2}_{\mathcal{A},k}  = \Big\Vert \sum_{i=1}^{M}\chi_{i}(u^{\mathdutchcal{e}}-\varphi_{i})\Big\Vert^{2}_{\mathcal{A},k} \leq \zeta \sum_{i=1}^{M}\big\Vert \chi_{i}(u^{\mathdutchcal{e}} - \varphi_{i})\big\Vert_{\mathcal{A},\omega_i,k}^{2}.
\end{equation}
Following the same lines as in the proof of Theorem 2.1 in \cite{ma2021novel}, we get \cref{eq:4-2} by inserting \cref{eq:4-3} into \cref{eq:4-4} and using \cref{eq:4-1-0} and the stability estimate \cref{eq:1-6}. 
\end{proof}

To derive a quasi-optimal convergence rate for the method, we introduce the solution operator $\widehat{S}: L^{2}(\Omega)\rightarrow H_{D}^{1}(\Omega)$ for the problem:
\begin{equation}\label{eq:4-4-0}
{\rm Find} \;\;\hat{u}\in H_{D}^{1}(\Omega)\quad {\rm such\;\,that}\quad \mathcal{B}(\hat{u},v) = (f,v)_{L^{2}(\Omega)}\quad \forall v\in H_{D}^{1}(\Omega),
\end{equation}
i.e., $\widehat{S}(f) := \hat{u}$, and define the following quantity
\begin{equation}\label{eq:4-4-1}
\eta (S_{n}(\Omega)):=\sup_{f\in L^{2}(\Omega)}\inf_{\varphi\in S_{n}(\Omega)}\frac{\Vert \widehat{S}(f)-\varphi \Vert_{\mathcal{A},k}}{\Vert f \Vert_{L^{2}(\Omega)}}.
\end{equation}
It follows from \cref{ass:1-2} that the operator $\widehat{S}$ is well defined. Recalling the inequality \cref{eq:1-5} with the $k$-independent constant $C_{\mathcal{B}}$ and using a standard duality argument (see, e.g., \cite{esterhazy2012stability}), we have
\begin{theorem}\label{thm:4-1}
Let $u^{\mathdutchcal{e}}$ be the solution of the problem \cref{eq:1-2}. Assuming that
\begin{equation}\label{eq:4-6}
2C_{\mathcal{B}}kV_{\rm max}\,\eta (S_{n}(\Omega)) \leq 1,
\end{equation}
then the problem \cref{eq:2-1} has a unique solution $u^{G}\in u^{p}+S_{n}(\Omega)$ satisfying
\begin{equation}\label{eq:4-7}
\Vert u^{\mathdutchcal{e}}-u^{G}\Vert_{\mathcal{A},k} \leq 2C_{\mathcal{B}}\inf_{\varphi\in u^{p} + S_{n}(\Omega)}\Vert u^{\mathdutchcal{e}}-\varphi \Vert_{\mathcal{A},k}.
\end{equation}
\end{theorem}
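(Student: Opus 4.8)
The plan is to establish the conclusion by a classical Schatz-type argument combined with the Gårding inequality for $\mathcal{B}$ and a duality estimate for the adjoint problem, applied within the GFEM subspace $u^{p}+S_{n}(\Omega)$. First I would verify quasi-optimality under the condition \eqref{eq:4-6}. The starting point is the Gårding-type identity: for any $w\in H^{1}_{D}(\Omega)$ one has $\|w\|_{\mathcal{A},k}^{2}=\mathrm{Re}\,\mathcal{B}(w,w)+2k^{2}\int_{\Omega}V^{2}|w|^{2}\,d{\bm x}$, so that, writing $e=u^{\mathdutchcal{e}}-u^{G}$ and using Galerkin orthogonality \eqref{eq:2-1-0} which gives $\mathcal{B}(e,v)=0$ for all $v\in S_{n}(\Omega)$, we get for arbitrary $\varphi\in u^{p}+S_{n}(\Omega)$
\begin{equation*}
\|e\|_{\mathcal{A},k}^{2}=\mathrm{Re}\,\mathcal{B}(e,u^{\mathdutchcal{e}}-\varphi)+2k^{2}\int_{\Omega}V^{2}|e|^{2}\,d{\bm x}\leq C_{\mathcal{B}}\|e\|_{\mathcal{A},k}\|u^{\mathdutchcal{e}}-\varphi\|_{\mathcal{A},k}+2k^{2}V_{\rm max}^{2}\|e\|_{L^{2}(\Omega)}^{2},
\end{equation*}
using \eqref{eq:1-5} and Assumption~\ref{ass:1-1}(ii). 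The task then reduces to absorbing the zeroth-order term $2k^{2}V_{\rm max}^{2}\|e\|_{L^{2}(\Omega)}^{2}$.

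To control $\|e\|_{L^{2}(\Omega)}$ I would use the standard Aubin--Nitsche duality trick adapted to the indefinite setting. Since $e$ is real-linearly related to a complex quantity, set $z=\widehat{S}(\overline{e})$, i.e. $\mathcal{B}(v,z)=(v,e)_{L^{2}(\Omega)}$ for all $v\in H_{D}^{1}(\Omega)$ (the adjoint problem; note $\mathcal{B}(\cdot,\cdot)$ with $\overline{e}$ on the right, so that testing with $v=e$ yields $\|e\|_{L^{2}(\Omega)}^{2}=\mathcal{B}(e,z)$). By Galerkin orthogonality, $\mathcal{B}(e,z)=\mathcal{B}(e,z-\varphi_{z})$ for any $\varphi_{z}\in S_{n}(\Omega)$, hence by \eqref{eq:1-5}
\begin{equation*}
\|e\|_{L^{2}(\Omega)}^{2}\leq C_{\mathcal{B}}\|e\|_{\mathcal{A},k}\inf_{\varphi_{z}\in S_{n}(\Omega)}\|z-\varphi_{z}\|_{\mathcal{A},k}\leq C_{\mathcal{B}}\,\eta(S_{n}(\Omega))\,\|e\|_{\mathcal{A},k}\|e\|_{L^{2}(\Omega)},
\end{equation*}
where the last step invokes the definition \eqref{eq:4-4-1} of $\eta(S_{n}(\Omega))$ together with the fact that the adjoint solution operator enjoys the same approximation property as $\widehat{S}$ (by taking conjugates, since $A$ and $V$ are real). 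Thus $\|e\|_{L^{2}(\Omega)}\leq C_{\mathcal{B}}\,\eta(S_{n}(\Omega))\,\|e\|_{\mathcal{A},k}$. Substituting back and using hypothesis \eqref{eq:4-6}, namely $2C_{\mathcal{B}}kV_{\rm max}\,\eta(S_{n}(\Omega))\leq1$, the zeroth-order term is bounded by $\tfrac12\|e\|_{\mathcal{A},k}^{2}$ — actually one arranges the constant so the absorbed term is at most $\tfrac12\|e\|_{\mathcal{A},k}^{2}$ — and after absorption and dividing by $\|e\|_{\mathcal{A},k}$ we obtain $\|e\|_{\mathcal{A},k}\leq 2C_{\mathcal{B}}\|u^{\mathdutchcal{e}}-\varphi\|_{\mathcal{A},k}$; taking the infimum over $\varphi$ gives \eqref{eq:4-7}.

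For existence and uniqueness of $u^{G}$, I would argue that the discrete problem \eqref{eq:2-1} is a square linear system on the finite-dimensional space $u^{p}+S_{n}(\Omega)$, so it suffices to show uniqueness: if $f=g=0$ then $u^{\mathdutchcal{e}}=0$, and the homogeneous version of the estimate just derived (run with $u^{\mathdutchcal{e}}=0$, which is legitimate since the Gårding/duality chain only used Galerkin orthogonality and \eqref{eq:4-6}) forces any solution of the homogeneous discrete equation to vanish; equivalently, take $\varphi=0$ in the bound above to conclude $\|u^{G}\|_{\mathcal{A},k}=0$. Hence the system is invertible and \eqref{eq:4-7} holds for the unique solution.

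The main obstacle I anticipate is the bookkeeping around the constant in the absorption step and the correct handling of complex conjugation in the duality argument: one must be careful that the adjoint problem defining $z$ is set up so that $\|e\|_{L^{2}(\Omega)}^{2}$ comes out real and positive, and that $\eta(S_{n}(\Omega))$ — defined via the primal operator $\widehat{S}$ — genuinely bounds the adjoint approximation error. Since the coefficients $A$ and $V$ are real-valued (Assumption~\ref{ass:1-1}) and the only complex term in $\mathcal{B}$ is the boundary term $-\mathrm{i}k\int_{\Gamma_{R}}\beta u\overline{v}$, the adjoint operator is just complex conjugation of the primal one, so $\eta$ controls both; this is the point where one cites the "standard duality argument" referenced before the theorem statement. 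Everything else is routine: the Gårding identity is an elementary computation from the definition of $\mathcal{B}$, and the inequality \eqref{eq:1-5} with $k$-independent $C_{\mathcal{B}}$ is already quoted.
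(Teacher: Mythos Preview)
Your proposal is correct and follows essentially the same Schatz-type argument as the paper: the G\aa rding identity, Galerkin orthogonality with \eqref{eq:1-5}, the Aubin--Nitsche duality bound $\|e\|_{L^{2}(\Omega)}\leq C_{\mathcal{B}}\,\eta(S_{n}(\Omega))\,\|e\|_{\mathcal{A},k}$ via the adjoint problem and conjugation, absorption using \eqref{eq:4-6}, and finite-dimensional invertibility. One small notational slip: the function $z$ satisfying $\mathcal{B}(v,z)=(v,e)_{L^{2}(\Omega)}$ is $\overline{\widehat{S}(\overline{e})}$ rather than $\widehat{S}(\overline{e})$ itself, but you correctly identify this conjugation issue in your final paragraph and the paper handles it in exactly the same way (noting $\overline{w^{\mathdutchcal{e}}}=\widehat{S}(\overline{e_{G}})$ and using that $\overline{\varphi}\in S_{n}(\Omega)$).
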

\begin{proof}
Since \cref{eq:2-1} is a finite-dimensional linear system, its unique solvablity is implied by \cref{eq:4-7} and \cref{ass:1-2}. Hence we restrict our attention to the proof of \cref{eq:4-7}. Let $e_{G} = u^{\mathdutchcal{e}}-u^{G}$. We observe that
\begin{equation}\label{eq:4-8}
\Vert e_{G} \Vert^{2}_{\mathcal{A},k} = {\rm Re}\, \mathcal{B}(e_{G}, e_{G}) + 2k^{2}\big(V^{2}e_{G}, e_{G}\big)_{L^{2}(\Omega)}. 
\end{equation}
Using the Galerkin orthogonality \cref{eq:2-1-0} and \cref{eq:1-5} yields that for any $\varphi\in u^{p}+S_{n}(\Omega)$,
\begin{equation}\label{eq:4-9}
\begin{array}{lll}
{\displaystyle \Vert e_{G} \Vert^{2}_{\mathcal{A},k} = {\rm Re}\, \mathcal{B}(e_{G}, u^{\mathdutchcal{e}}-\varphi) + 2k^{2}\big(V^{2}e_{G}, e_{G}\big)_{L^{2}(\Omega)} }\\[2mm]
{\displaystyle \qquad \leq C_{\mathcal{B}}\Vert e_{G} \Vert_{\mathcal{A},k} \Vert u^{\mathdutchcal{e}}-\varphi \Vert_{\mathcal{A},k} + 2k^{2}V^{2}_{\rm max}\Vert e_{G}\Vert_{L^{2}(\Omega)}^{2}.}
\end{array}
\end{equation}
To estimate $\Vert e_{G}\Vert_{L^{2}(\Omega)}$, we consider the adjoint problem:
\begin{equation}\label{eq:4-10}
{\rm Find} \;\;w^{\mathdutchcal{e}}\in H_{D}^{1}(\Omega)\quad {\rm such\;\,that}\quad \mathcal{B}(v, w^\mathdutchcal{e}) = (v, e_{G})_{L^{2}(\Omega)}\quad \forall v\in H_{D}^{1}(\Omega).
\end{equation}
Note that $\overline{w^{\mathdutchcal{e}}}\in  H_{D}^{1}(\Omega)$ satisfies
\begin{equation}\label{eq:4-10-1}
\mathcal{B}(\overline{w^\mathdutchcal{e}},v) = (\overline{e_{G}},v)_{L^{2}(\Omega)}\quad \forall v\in H_{D}^{1}(\Omega),
\end{equation}
i.e., $\overline{w^{\mathdutchcal{e}}} =\widehat{S}(\overline{e_{G}})$. Choosing $v = e_{G}$ in \cref{eq:4-10} and using the Galerkin orthogonality again, we see that for any $\varphi \in S_{n}(\Omega)$, 
\begin{equation}\label{eq:4-11}
\begin{array}{lll}
{\displaystyle \Vert e_{G}\Vert_{L^{2}(\Omega)}^{2} = \mathcal{B}(e_{G}, w^{\mathdutchcal{e}}-\varphi)  \leq C_{\mathcal{B}}\Vert e_{G} \Vert_{\mathcal{A},k} \, \Vert\overline{w^{\mathdutchcal{e}}}-\overline{\varphi} \Vert_{\mathcal{A},k}}\\[2mm]
{\displaystyle \qquad \qquad \;\; \leq  C_{\mathcal{B}}\,\Vert e_{G} \Vert_{\mathcal{A},k}\,\big\Vert \widehat{S}(\overline{e_{G}}) - \overline{\varphi}\big\Vert_{\mathcal{A},k},}
\end{array}
\end{equation}
which, combining with \cref{eq:4-4-1} and the fact that $\overline{\varphi}\in S_{n}(\Omega)$,  gives that
\begin{equation}\label{eq:4-12}
\Vert e_{G}\Vert_{L^{2}(\Omega)}\leq C_{\mathcal{B}}\,\eta (S_{n}(\Omega)) \Vert e_{G} \Vert_{\mathcal{A},k}.
\end{equation}
Inserting \cref{eq:4-12} into \cref{eq:4-9} and using \cref{eq:4-6}, we get
\begin{equation}\label{eq:4-13}
\begin{array}{lll}
{\displaystyle \Vert e_{G} \Vert^{2}_{\mathcal{A},k}\leq C_{\mathcal{B}}\Vert e_{G} \Vert_{\mathcal{A},k} \Vert u^{\mathdutchcal{e}}-\varphi \Vert_{\mathcal{A},k} + 2\big(kV_{\rm max}C_{\mathcal{B}} \eta (S_{n}(\Omega)) \big)^{2}\Vert e_{G} \Vert^{2}_{\mathcal{A},k}}\\[2mm]
{\displaystyle \qquad \qquad \leq C_{\mathcal{B}}\Vert e_{G} \Vert_{\mathcal{A},k} \Vert u^{\mathdutchcal{e}}-\varphi \Vert_{\mathcal{A},k} + \frac{1}{2} \Vert e_{G} \Vert^{2}_{\mathcal{A},k} \quad \forall \varphi\in u^{p}+S_{n}(\Omega),}
\end{array}
\end{equation}
from which the estimate \cref{eq:4-7} follows.
\end{proof}

In the rest of this subsection, we derive an upper bound for $\eta (S_{n}(\Omega))$. To do this, we need the following Poincar\'{e} inequality with an explicit dependence on the diameter of a domain, \cm{which can be found in \cite[Corollary A.15]{toselli2004domain}}.

\begin{lemma}\label{lem:4-2}
Let $\Omega^{\prime}\subset \mathbb{R}^{d}$ be a bounded Lipschitz domain and let $\Gamma^{\prime}\subset \partial \Omega^{\prime}$ with $|\Gamma^{\prime}|>0$. Then there exists a constant $C^{\prime}_{P}$ depending on $\cm{\Gamma^{\prime}}$ and on the shape of $\Omega^{\prime}$, but not on its size, such that for any $u\in H^{1}(\Omega^{\prime})$ with vanishing trace on $\Gamma^{\prime}$,
\begin{equation}\label{eq:4-5}
\Vert u \Vert_{L^{2}(\Omega^{\prime})}\leq C_{P}^{\prime} \,{\rm diam}(\Omega^{\prime})\,\Vert \nabla u\Vert_{L^{2}(\Omega^{\prime})}.
\end{equation}
\end{lemma}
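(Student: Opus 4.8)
The plan is to prove the lemma by a standard scaling-plus-compactness argument, isolating the dependence on the size of $\Omega'$ through a dilation, so that the remaining constant depends only on the normalized shape of the pair $(\Omega',\Gamma')$.

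First I would reduce to the unit-diameter case. Set $d_{\Omega'}:={\rm diam}(\Omega')$ and introduce the dilated domain $\widehat\Omega:=d_{\Omega'}^{-1}\Omega'$ together with $\widehat\Gamma:=d_{\Omega'}^{-1}\Gamma'\subset\partial\widehat\Omega$, so that ${\rm diam}(\widehat\Omega)=1$ and $\widehat\Omega$ is still Lipschitz with the same shape as $\Omega'$. For $u\in H^1(\Omega')$ with vanishing trace on $\Gamma'$, the rescaled function $\hat u(\hat{\bm x}):=u(d_{\Omega'}\hat{\bm x})$ lies in $H^1(\widehat\Omega)$ and has vanishing trace on $\widehat\Gamma$, and a change of variables gives $\Vert\hat u\Vert_{L^2(\widehat\Omega)}^2=d_{\Omega'}^{-d}\Vert u\Vert_{L^2(\Omega')}^2$ and $\Vert\nabla\hat u\Vert_{L^2(\widehat\Omega)}^2=d_{\Omega'}^{2-d}\Vert\nabla u\Vert_{L^2(\Omega')}^2$. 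Hence a Poincar\'e inequality on $\widehat\Omega$ with constant $\widehat C_P=\widehat C_P(\widehat\Omega,\widehat\Gamma)$ transfers directly: cancelling the powers of $d_{\Omega'}$ yields \cref{eq:4-5} with $C'_P=\widehat C_P$, and since $\widehat\Omega$ and $\widehat\Gamma$ encode only the shape of the original pair, this gives the claimed size-independence of the constant.

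It then remains to establish the Poincar\'e--Friedrichs inequality on the fixed connected Lipschitz domain $\widehat\Omega$ with vanishing trace on $\widehat\Gamma$, $|\widehat\Gamma|>0$. I would argue by contradiction: if no finite constant existed, there would be a sequence $\hat u_n\in H^1(\widehat\Omega)$ with zero trace on $\widehat\Gamma$, $\Vert\hat u_n\Vert_{L^2(\widehat\Omega)}=1$, and $\Vert\nabla\hat u_n\Vert_{L^2(\widehat\Omega)}\to0$. This sequence is bounded in $H^1(\widehat\Omega)$, and since a Lipschitz domain satisfies the cone condition the embedding $H^1(\widehat\Omega)\hookrightarrow L^2(\widehat\Omega)$ is compact (Rellich--Kondrachov, cf. \cite[Theorem 6.3]{adams2003sobolev}); passing to a subsequence, $\hat u_n\to\hat u$ in $L^2(\widehat\Omega)$. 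Since also $\nabla\hat u_n\to0$ in $L^2$, the sequence is Cauchy in $H^1$, so in fact $\hat u_n\to\hat u$ in $H^1(\widehat\Omega)$ with $\nabla\hat u\equiv0$; connectedness then forces $\hat u$ to equal a constant $c$, and $\Vert\hat u\Vert_{L^2(\widehat\Omega)}=1$ gives $c\neq0$. Continuity of the trace operator $H^1(\widehat\Omega)\to L^2(\partial\widehat\Omega)$ shows that the trace of $\hat u$ is the $L^2(\widehat\Gamma)$-limit of the (vanishing) traces of $\hat u_n$, hence $c=0$ on $\widehat\Gamma$, a set of positive measure; this forces $c=0$, contradicting $\Vert\hat u\Vert_{L^2(\widehat\Omega)}=1$.

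No genuine obstacle arises, as this is a classical fact (essentially \cite[Corollary A.15]{toselli2004domain}); the only point that deserves care is the bookkeeping of the scaling exponents, which is precisely what guarantees that $C'_P$ depends on the shape of $\Omega'$ and on $\Gamma'$ only, and not on ${\rm diam}(\Omega')$. If one preferred to avoid compactness, an alternative for domains that are star-shaped with respect to a ball (or finite unions of such, patched by a partition of unity) would be to integrate $u$ along rays and apply the Cauchy--Schwarz inequality; but the contradiction argument above is shorter and uses only the Lipschitz hypothesis.
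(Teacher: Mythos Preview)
Your proof is correct: the scaling reduction to a unit-diameter reference domain followed by the standard Rellich--Kondrachov contradiction argument is exactly the right way to obtain \cref{eq:4-5} with a size-independent constant. The paper itself does not give a proof of this lemma at all; it merely cites \cite[Corollary A.15]{toselli2004domain}, so your argument in fact supplies what the paper leaves to the literature.
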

By virtue of \cref{lem:4-2}, we define $C_{P}$ as the uniform Poincar\'{e} constant such that 
\begin{equation}\label{eq:4-5-0}
\Vert u \Vert_{L^{2}(\omega_{i}^{\ast})}\leq C_{P}\,{\rm diam}(\omega_{i}^{\ast})\, \Vert \nabla u\Vert_{L^{2}(\omega_{i}^{\ast})},\quad \forall u\in H^{1}_{DI}(\omega_{i}^{\ast}),\quad \forall i=1,\cdots,M.
\end{equation}

Recalling \cref{ass:1-2} with the stability constant $C_{\rm stab}(k)$, and combining the local approximation error estimates and the Poincar\'{e} inequality, we can prove 
\begin{lemma}\label{lem:4-3}
\cm{Assuming that for $i=1,\ldots,M$, ${\rm diam}(\omega_{i}^{\ast}) \leq 2\,{\rm diam}(\omega_{i})$}, and that
\begin{equation}\label{eq:4-5-1}
kV_{\rm max}C_{P} H^{\ast}_{\mathtt{max}} \leq a^{1/2}_{\rm min}/\sqrt{2},
\vspace{-1.25ex}
\end{equation}
then, 
\begin{equation}\label{eq:4-5-2}
\vspace{-1.25ex}
\eta (S_{n}(\Omega))\leq\sqrt{\zeta \zeta^{\ast}}\Big(\sqrt{2} d_{\mathtt{max}} (C_{\rm stab}(k) + 2\Lambda) + \cm{9C_{\chi}C_{P}\big(a_{\rm max}/a_{\rm min}\big)^{1/2} \Lambda}\Big),
\vspace{-0.5ex}
\end{equation}
where $\Lambda = C_{P}H^{\ast}_{\mathtt{max}}a^{-1/2}_{\rm min}$, and $C_{\chi}$ is given by \cref{eq:2-0-1}.
\end{lemma}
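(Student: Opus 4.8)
The plan is to bound $\eta(S_n(\Omega))$ by estimating, for an arbitrary $f\in L^2(\Omega)$, the distance from $\hat u:=\widehat S(f)$ to the space $S_n(\Omega)$ in the $\Vert\cdot\Vert_{\mathcal A,k}$-norm, and then to take the supremum over $\Vert f\Vert_{L^2(\Omega)}\le 1$. The key observation is that $\hat u$ solves the same type of problem as $u^{\mathdutchcal e}$ but with right-hand side data $f\in L^2(\Omega)$ and $g=0$; hence the entire local machinery applies to $\hat u$ verbatim. First I would record, via \cref{ass:1-2} applied to the problem \cref{eq:4-4-0}, the global stability bound $\Vert\hat u\Vert_{\mathcal A,k}\le C_{\rm stab}(k)\Vert f\Vert_{L^2(\Omega)}$. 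Next, on each oversampling domain $\omega_i^\ast$ let $\hat\psi_i\in H^1_D(\omega_i^\ast)$ be the solution of the local problem \cref{eq:2-4-0} with data $f|_{\omega_i^\ast}$ and $g=0$; then $\hat u|_{\omega_i^\ast}-\hat\psi_i\in H_{\mathcal B}(\omega_i^\ast)$, so by \cref{thm:2-2} (the $n$-width bound \cref{eq:2-20}) there is $\hat\varphi_i\in\hat\psi_i|_{\omega_i}+S_{n_i}(\omega_i)$ with $\Vert\chi_i(\hat u-\hat\varphi_i)\Vert_{\mathcal A,\omega_i,k}\le d_{n_i}(\omega_i,\omega_i^\ast)\Vert\hat u-\hat\psi_i\Vert_{\mathcal A,\omega_i^\ast}$.

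The one genuinely new ingredient compared with \cref{lem:4-1} is that now $\hat\psi_i\notin S_{n_i}(\omega_i)$ in general (there is no global particular function $u^p$ available in the definition \cref{eq:4-4-1}), so I must also approximate $\hat\psi_i$ itself from $S_{n_i}(\omega_i)$. I would bound $\Vert\chi_i\hat\psi_i\Vert_{\mathcal A,\omega_i,k}$ directly: since $\hat\psi_i$ vanishes on $\partial\omega_i^\ast\cap\Gamma_D$ one has $\chi_i\hat\psi_i\in H^1_{DI}(\omega_i)$, and combining the product rule for $\nabla(\chi_i\hat\psi_i)$ with the partition-of-unity gradient bound $\Vert\nabla\chi_i\Vert_{L^\infty}\le C_\chi/{\rm diam}(\omega_i)$, the hypothesis ${\rm diam}(\omega_i^\ast)\le 2\,{\rm diam}(\omega_i)$, the small-wavenumber condition \cref{eq:4-5-1}, and the Poincar\'e inequality \cref{eq:4-5-0} to control $\Vert\hat\psi_i\Vert_{L^2(\omega_i^\ast)}$ by ${\rm diam}(\omega_i^\ast)\Vert\nabla\hat\psi_i\Vert_{L^2(\omega_i^\ast)}$, I would obtain $\Vert\chi_i\hat\psi_i\Vert_{\mathcal A,\omega_i,k}\le 9C_\chi C_P(a_{\rm max}/a_{\rm min})^{1/2}C_P H^\ast_{\mathtt{max}}a_{\rm min}^{-1/2}\Vert\hat\psi_i\Vert_{\mathcal A,\omega_i^\ast}$ — this is where the factor $9C_\chi C_P(a_{\rm max}/a_{\rm min})^{1/2}\Lambda$ in \cref{eq:4-5-2} comes from. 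Using the triangle inequality and $\Vert\hat\psi_i\Vert_{\mathcal A,\omega_i^\ast}\le\Vert\hat\psi_i\Vert_{\mathcal A,\omega_i^\ast,k}\le C^i_{\rm stab}(k)\Vert f\Vert_{L^2(\omega_i^\ast)}$ together with $\Vert\hat u-\hat\psi_i\Vert_{\mathcal A,\omega_i^\ast}\le\Vert\hat u\Vert_{\mathcal A,\omega_i^\ast}+C^i_{\rm stab}(k)\Vert f\Vert_{L^2(\omega_i^\ast)}$ assembles the local error into a sum of a $d_{\mathtt{max}}$-weighted term and a $\Lambda$-weighted term, each multiplied by stability/Poincar\'e constants.

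Finally I would glue: set $\hat\varphi=\sum_i\chi_i\hat\varphi_i\in S_n(\Omega)$ (note the absence of a particular-function shift is fine, since the whole point is that $S_{n_i}(\omega_i)$ already contains the relevant approximants once we absorb $\hat\psi_i$), use the finite-overlap bound $\Vert\sum_i\chi_i v_i\Vert_{\mathcal A,k}^2\le\zeta\sum_i\Vert\chi_i v_i\Vert_{\mathcal A,\omega_i,k}^2$ from \cref{eq:2-0}, bound $\sum_i\Vert f\Vert_{L^2(\omega_i^\ast)}^2\le\zeta^\ast\Vert f\Vert_{L^2(\Omega)}^2$ and $\sum_i\Vert\hat u\Vert_{\mathcal A,\omega_i^\ast}^2\le\zeta^\ast\Vert\hat u\Vert_{\mathcal A}^2\le\zeta^\ast C_{\rm stab}(k)^2\Vert f\Vert_{L^2(\Omega)}^2$ via \cref{eq:4-1-0}, and collect the constants; dividing by $\Vert f\Vert_{L^2(\Omega)}$ and taking the supremum yields \cref{eq:4-5-2}. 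The main obstacle is the careful bookkeeping of constants in the direct estimate of $\Vert\chi_i\hat\psi_i\Vert_{\mathcal A,\omega_i,k}$ — in particular extracting the clean numerical constant $9$ requires using \cref{eq:4-5-1} to dominate the zeroth-order term $kV_{\rm max}\Vert\chi_i\hat\psi_i\Vert_{L^2}$ against the gradient term and combining the two contributions ($\nabla\chi_i$ hitting $\hat\psi_i$ versus $\chi_i$ hitting $\nabla\hat\psi_i$) with the same Poincar\'e-scaled bound; everything else follows the pattern of \cref{lem:4-1} and Theorem 2.1 of \cite{ma2021novel}.
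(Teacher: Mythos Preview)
There is a genuine gap. You take $\hat\psi_i$ to be the solution of the \emph{impedance} local problem \cref{eq:2-4-0} on $\omega_i^\ast$, and then invoke the Poincar\'e inequality \cref{eq:4-5-0} to bound $\Vert\hat\psi_i\Vert_{L^2(\omega_i^\ast)}$ by $C_P\,{\rm diam}(\omega_i^\ast)\Vert\nabla\hat\psi_i\Vert_{L^2(\omega_i^\ast)}$. But \cref{eq:4-5-0} is stated only for functions in $H^1_{DI}(\omega_i^\ast)$, i.e.\ functions that vanish on the interior boundary $\partial\omega_i^\ast\cap\Omega$. The impedance solution $\hat\psi_i\in H^1_D(\omega_i^\ast)$ does \emph{not} vanish there (it satisfies an impedance condition), so this step fails outright. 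Without it, your derivation of the factor $9C_\chi C_P(a_{\rm max}/a_{\rm min})^{1/2}\Lambda$ for $\Vert\chi_i\hat\psi_i\Vert_{\mathcal A,\omega_i,k}$ collapses. A second, related problem is that you bound $\Vert\hat\psi_i\Vert_{\mathcal A,\omega_i^\ast}$ by $C^i_{\rm stab}(k)\Vert f\Vert_{L^2(\omega_i^\ast)}$; this would put $C_{\mathtt{max}}(k)$ rather than $2\Lambda$ into the $d_{\mathtt{max}}$-weighted term, so you cannot arrive at the stated bound \cref{eq:4-5-2}.

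The paper avoids both issues by introducing a \emph{different} local problem: it seeks $\hat\psi_i\in H^1_{DI}(\omega_i^\ast)$ satisfying $\mathcal B_{\omega_i^\ast}(\hat\psi_i,v)=(f,v)_{L^2(\omega_i^\ast)}$ for all $v\in H^1_{DI}(\omega_i^\ast)$, i.e.\ with \emph{homogeneous Dirichlet} conditions on $\partial\omega_i^\ast\cap\Omega$. Because $\hat\psi_i$ now lies in $H^1_{DI}(\omega_i^\ast)$, the Poincar\'e inequality \cref{eq:4-5-0} legitimately applies, and together with the resolution condition \cref{eq:4-5-1} it makes $\mathcal B_{\omega_i^\ast}$ \emph{coercive} on $H^1_{DI}(\omega_i^\ast)$, yielding the $k$-independent stability bound $\Vert\hat\psi_i\Vert_{\mathcal A,\omega_i^\ast}\le 2\Lambda\Vert f\Vert_{L^2(\omega_i^\ast)}$. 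This is the missing idea: the assumption \cref{eq:4-5-1} is used not merely to absorb a lower-order term but to guarantee well-posedness and an $O(\Lambda)$ stability constant for a Dirichlet local problem, which then feeds both the $2\Lambda$ term and the Poincar\'e-based estimate of $\Vert\chi_i\hat\psi_i\Vert_{\mathcal A,\omega_i,k}$. Once you switch to this Dirichlet local problem, the rest of your gluing argument goes through essentially as you outline.
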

\begin{proof}
By \cref{ass:1-2}, for any $f\in L^{2}(\Omega)$, the problem \cref{eq:4-4-0} has a unique solution $\cm{\widehat{S}(f)}\in H^{1}_{D}(\Omega)$ with the estimate
\begin{equation}\label{eq:4-5-3}
\big \Vert \cm{\widehat{S}(f)} \big\Vert_{\mathcal{A},k}\leq C_{\rm stab}(k) \Vert f\Vert_{L^{2}(\Omega)}.
\end{equation}
On each oversampling domain $\omega_{i}^{\ast}$, we consider the following local Helmholtz problem:
\begin{equation}\label{eq:4-5-4}
{\rm Find} \;\;\hat{\psi}_{i}\in H^{1}_{DI}(\omega_{i}^{\ast}) \quad {\rm such\;\,that} \quad\mathcal{B}_{\omega_{i}^{\ast}}(\hat{\psi}_{i},v) = (f,v)_{L^{2}(\omega^{\ast}_{i})}\quad \forall v\in H^{1}_{DI}(\omega_{i}^{\ast}).
\end{equation}
Note that $\hat{\psi}_{i}$ satisfies the homogeneous Dirichlet boundary conditions on $\partial \omega_{i}^{\ast}\cap \Omega$. Under the assumption \cref{eq:4-5-1}, the local sesquilinear form in \cref{eq:4-5-4} is coercive. In fact, using \cref{eq:4-5-1} and the Poincar\'{e} inequality \cref{eq:4-5-0}, we have that for any $v\in H^{1}_{DI}(\omega_{i}^{\ast})$,
\begin{equation}\label{eq:4-5-5}
\begin{array}{lll}
{\displaystyle {\rm Re}\,\mathcal{B}_{\omega_{i}^{\ast}}(v, v) \geq \Vert v\Vert_{\mathcal{A},\omega_{i}^{\ast}}^{2} - k^{2}V^{2}_{\rm max}C_{P}^{2}\,[{\rm diam}(\omega_{i}^{\ast})]^{2} \Vert \nabla v\Vert^{2}_{L^{2}(\omega_{i}^{\ast})} }\\[2mm]
{\displaystyle \qquad \qquad \quad \;\;\geq \Vert v\Vert_{\mathcal{A},\omega_{i}^{\ast}}^{2} - a_{\rm min}\Vert \nabla v\Vert^{2}_{L^{2}(\omega_{i}^{\ast})}/2 \geq \Vert v\Vert_{\mathcal{A},\omega_{i}^{\ast}}^{2}/2.}
\end{array}
\end{equation}
Using the Poincar\'{e} inequality \cref{eq:4-5-0} again, it follows that
\begin{equation}\label{eq:4-5-6}
\begin{array}{lll}
{\displaystyle \Vert \hat{\psi}_{i} \Vert^{2}_{\mathcal{A},\omega_{i}^{\ast}}\leq 2C_{P}\,{\rm diam}(\omega_{i}^{\ast}) \Vert f\Vert_{L^{2}(\omega_{i}^{\ast})} \Vert \nabla \hat{\psi}_{i}\Vert_{L^{2}(\omega_{i}^{\ast})}}\\[2mm]
{\displaystyle \qquad \qquad \leq 2C_{P}\,{\rm diam}(\omega_{i}^{\ast})\,a^{-1/2}_{\rm min}\Vert f\Vert_{L^{2}(\omega_{i}^{\ast})} \Vert \hat{\psi}_{i} \Vert_{\mathcal{A},\omega_{i}^{\ast}}.}
\end{array}
\end{equation}
Denoting by $\Lambda = C_{P}H^{\ast}_{\mathtt{max}}a^{-1/2}_{\rm min}$, \cref{eq:4-5-6} leads to 
\begin{equation}\label{eq:4-5-7}
\Vert \hat{\psi}_{i} \Vert_{\mathcal{A},\omega_{i}^{\ast}}\leq 2\Lambda \Vert f\Vert_{L^{2}(\omega_{i}^{\ast})}, \quad {\rm and}\quad\; \Vert \hat{\psi}_{i} \Vert_{\mathcal{A},\omega_{i}^{\ast},k}\leq  3\Lambda \Vert f\Vert_{L^{2}(\omega_{i}^{\ast})}.
\end{equation}
Furthermore, combining \cref{eq:4-4-0,eq:4-5-4}, we see that $\widehat{S}(f)|_{\omega_{i}^{\ast}}-\hat{\psi}_{i} \in H_{\mathcal{B}}(\omega_{i}^{\ast})$. Recalling the definition of the $n$-width and using \cref{eq:4-5-7}, it follows that there exists $\varphi_{i}\in S_{n_i}(\omega_i)$ such that
\begin{equation}\label{eq:4-5-8}
\begin{array}{lll}
{\displaystyle \big \Vert \chi_{i}\big(\widehat{S}(f)-\hat{\psi}_{i}-\varphi_{i}\big)\big\Vert_{\mathcal{A},\omega_{i},k}\leq d_{n_i}(\omega_i,\omega_{i}^{\ast})\big\Vert \widehat{S}(f)-\hat{\psi}_{i}\big\Vert_{\mathcal{A},\omega^{\ast}_{i}} }\\[2mm]
{\displaystyle \quad \leq d_{n_i}(\omega_i,\omega_{i}^{\ast})\big(\big\Vert \widehat{S}(f) \big\Vert_{\mathcal{A},\omega^{\ast}_{i}} +  2\Lambda\Vert f\Vert_{L^{2}(\omega_{i}^{\ast})}\big).}
\end{array}
\end{equation}
Define $\hat{u}^{p} = \sum^{M}_{i=1}\chi_{i}\hat{\psi}_{i}$ and $\varphi = \sum^{M}_{i=1}\chi_{i}\varphi_{i}\in S_{n}(\Omega)$. Using \cref{eq:4-5-3} and a similar argument as in the proof of \cref{lem:4-1}, we get
\begin{equation}\label{eq:4-5-9}
\begin{array}{lll}
{\displaystyle \big \Vert \widehat{S}(f)-\hat{u}^{p}-\varphi \big\Vert_{\mathcal{A},k}\leq \sqrt{2\zeta \zeta^{\ast}} d_{\mathtt{max}} \big(\big\Vert \widehat{S}(f) \big\Vert_{\mathcal{A}} +2\Lambda \Vert f\Vert_{L^{2}(\Omega)}\big)} \\[2mm]
{\displaystyle \qquad \qquad \qquad \quad \;\;\leq \sqrt{2\zeta \zeta^{\ast}} d_{\mathtt{max}} \big(C_{\rm stab}(k) + 2\Lambda \big) \Vert f\Vert_{L^{2}(\Omega)},}
\end{array}
\end{equation}
and consequently
\begin{equation}\label{eq:4-5-10}
\big \Vert \widehat{S}(f)-\varphi \big \Vert_{\mathcal{A},k} \leq \Vert\hat{u}^{p} \Vert_{\mathcal{A},k} +\sqrt{2\zeta \zeta^{\ast}} d_{\mathtt{max}} \big(C_{\rm stab}(k) + 2\Lambda \big) \Vert f\Vert_{L^{2}(\Omega)}.
\end{equation}
It remains to estimate $\Vert\hat{u}^{p} \Vert_{\mathcal{A},k}$. By definition, we see that
\begin{equation}\label{eq:4-5-11}
\Vert \hat{u}^{p} \Vert^{2}_{\mathcal{A},k}\leq \zeta\sum_{i=1}^{M}\Vert \chi_{i}\hat{\psi}_{i}\Vert^{2}_{\mathcal{A},\omega_{i},k}\leq \zeta\sum_{i=1}^{M}\big(\Vert \chi_{i}\hat{\psi}_{i}\Vert^{2}_{\mathcal{A},\omega_{i}} + k^{2}\Vert V\hat{\psi}_{i}\Vert^{2}_{L^{2}(\omega_i)}\big).
\end{equation}
A use of the triangle inequality gives that
\begin{equation}\label{eq:4-5-12}
\begin{array}{lll}
{\displaystyle \Vert \chi_{i}\hat{\psi}_{i}\Vert_{\mathcal{A},\omega_{i}} \leq \Vert \hat{\psi}_{i}\Vert_{\mathcal{A},\omega_{i}} + a^{1/2}_{\rm max}\Vert \nabla\chi_{i}\Vert_{L^{\infty}(\omega_i)}\Vert \hat{\psi}_{i}\Vert_{L^{2}(\omega_i)}.}
\end{array}
\end{equation}
\cm{Noting that $\hat{\psi}\in H^{1}_{DI}(\omega_{i}^{\ast})$, we can use the Poincar\'{e} inequality \cref{eq:4-5-0} again to prove
\begin{equation}\label{eq:rev-4-1}
    \Vert \hat{\psi}_{i}\Vert_{L^{2}(\omega_i)} \leq \Vert \hat{\psi}_{i}\Vert_{L^{2}(\omega^{\ast}_i)} \leq C_{P}a^{-1/2}_{\rm min}\,{\rm diam}(\omega_{i}^{\ast}) \Vert \hat{\psi}_{i}\Vert_{\mathcal{A},\omega^{\ast}_{i}}.
\end{equation}
Combining \cref{eq:rev-4-1}, \cref{eq:2-0-1}, and the assumption that ${\rm diam}(\omega_{i}^{\ast}) \leq 2\,{\rm diam}(\omega_{i})$, the second term on the right-hand side of \cref{eq:4-5-12} can be bounded as follows.
\begin{equation}\label{eq:rev-4-2}
 a^{1/2}_{\rm max}\Vert \nabla\chi_{i}\Vert_{L^{\infty}(\omega_i)}\Vert \hat{\psi}_{i}\Vert_{L^{2}(\omega_i)}\leq 2C_{\chi}C_{P}\big(a_{\rm max}/a_{\rm min}\big)^{1/2} \Vert \hat{\psi}_{i}\Vert_{\mathcal{A},\omega^{\ast}_{i}}.   
\end{equation}
Without loss of generality, let us assume that $C_{\chi}C_{P}\big(a_{\rm max}/a_{\rm min}\big)^{1/2}\geq 1$. Inserting \cref{eq:rev-4-2} into \cref{eq:4-5-12} yields that 
\begin{equation}\label{eq:4-5-13}
\Vert \chi_{i}\hat{\psi}_{i}\Vert_{\mathcal{A},\omega_{i}}\leq 3C_{\chi}C_{P}\big(a_{\rm max}/a_{\rm min}\big)^{1/2} \Vert \hat{\psi}_{i}\Vert_{\mathcal{A},\omega^{\ast}_{i}}.
\end{equation}
Combining \cref{eq:4-5-7,eq:4-5-11,eq:4-5-13}, we come to
\begin{equation}\label{eq:4-5-14}
\begin{array}{lll}
{\displaystyle \Vert \hat{u}^{p} \Vert_{\mathcal{A},k}\leq 3C_{\chi}C_{P}\big(a_{\rm max}/a_{\rm min}\big)^{1/2} \Big(\zeta \sum_{i=1}^{M} \Vert\hat{\psi}_{i}\Vert^{2}_{\mathcal{A},\omega^{\ast}_{i},k}\Big)^{1/2} }\\[3mm]
{\displaystyle \qquad \quad \quad \leq 9C_{\chi}C_{P}\big(a_{\rm max}/a_{\rm min}\big)^{1/2} \sqrt{\zeta\zeta^{\ast}} \Lambda\Vert f\Vert_{L^{2}(\Omega)}. }
\end{array}
\end{equation}
}
Substituting \cref{eq:4-5-14} into \cref{eq:4-5-10} yields that
\begin{equation*}\label{eq:4-5-15}
\begin{array}{lll}
{\displaystyle \frac{\big \Vert \widehat{S}(f) -\varphi \big\Vert_{\mathcal{A},k}}{\Vert f\Vert_{L^{2}(\Omega)}} \leq \sqrt{\zeta \zeta^{\ast}}\Big(\sqrt{2} d_{\mathtt{max}} (C_{\rm stab}(k) + 2\Lambda) + 9C_{\chi}C_{P}\big(a_{\rm max}/a_{\rm min}\big)^{1/2} \Lambda\Big),}
\end{array}
\end{equation*}
and the desired estimate \cref{eq:4-5-2} follows.
\end{proof}

Based on \cref{thm:4-1,lem:4-3}, we see that some resolution conditions on $d_{\mathtt{max}}$ and $H^{\ast}_{\mathtt{max}}$ need to be imposed to obtain a quasi-optimal convergence rate for the method. To do this, we define a constant:
\begin{equation}\label{eq:4-5-16}
\Xi = C_{\mathcal{B}}V_{\rm max}\sqrt{\zeta \zeta^{\ast}}.
\end{equation} 

\begin{corollary}\label{cor:4-1}
Let $u^{\mathdutchcal{e}}$ be the solution of problem \cref{eq:1-2} and $u^{G}$ be the continuous MS-GFEM approximation, and \cm{let ${\rm diam}(\omega_{i}^{\ast}) \leq 2\,{\rm diam}(\omega_{i})$ for each $i=1,\ldots,M$}. Supposing that
\begin{equation}\label{eq:4-5-17}
d_{\mathtt{max}} \leq \big(8\sqrt{2}kC_{\rm stab}(k)\,\Xi\big)^{-1},\quad  \cm{H^{\ast}_{\mathtt{max}} \leq \big(36kC_{\chi}C^{2}_{P}a^{1/2}_{\rm max} a^{-1}_{\rm min}\,\Xi\big)^{-1}},
\vspace{-1ex}
\end{equation}
then
\begin{equation}\label{eq:4-5-18}
\vspace{-1ex}
\Vert u^{\mathdutchcal{e}}-u^{G}\Vert_{\mathcal{A},k} \leq 2C_{\mathcal{B}}\inf_{\varphi\in u^{p} + S_{n}(\Omega)}\Vert u^{\mathdutchcal{e}}-\varphi \Vert_{\mathcal{A},k}.
\end{equation}
\end{corollary}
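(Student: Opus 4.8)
The plan is to obtain \cref{cor:4-1} as an immediate consequence of \cref{thm:4-1} and \cref{lem:4-3}: by \cref{thm:4-1} it suffices to check that the resolution conditions \cref{eq:4-5-17} force the smallness condition \cref{eq:4-6}, namely $2C_{\mathcal{B}}kV_{\rm max}\,\eta(S_{n}(\Omega))\le 1$, after which \cref{eq:4-5-18} is literally \cref{eq:4-7}. Before applying \cref{lem:4-3} I would first confirm that its hypotheses are met under the present assumptions. The requirement ${\rm diam}(\omega_{i}^{\ast})\le 2\,{\rm diam}(\omega_{i})$ is assumed verbatim in \cref{cor:4-1}, and the coercivity condition \cref{eq:4-5-1} follows from the second bound in \cref{eq:4-5-17}: inserting $H^{\ast}_{\mathtt{max}}\le(36kC_{\chi}C_{P}^{2}a_{\rm max}^{1/2}a_{\rm min}^{-1}\Xi)^{-1}$ and $\Xi=C_{\mathcal{B}}V_{\rm max}\sqrt{\zeta\zeta^{\ast}}$ gives $kV_{\rm max}C_{P}H^{\ast}_{\mathtt{max}}\le a_{\rm min}/(36C_{\chi}C_{P}a_{\rm max}^{1/2}C_{\mathcal{B}}\sqrt{\zeta\zeta^{\ast}})$, which is at most $a_{\rm min}^{1/2}/\sqrt{2}$ since $a_{\rm min}\le a_{\rm max}$ and the product $C_{\chi}C_{P}C_{\mathcal{B}}\sqrt{\zeta\zeta^{\ast}}$ may be assumed $\ge 1$ (the same normalisation used in the proof of \cref{lem:4-3}).

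With \cref{lem:4-3} in force, I would substitute the bound \cref{eq:4-5-2} for $\eta(S_{n}(\Omega))$ into $2C_{\mathcal{B}}kV_{\rm max}\,\eta(S_{n}(\Omega))$ and use $\Xi=C_{\mathcal{B}}V_{\rm max}\sqrt{\zeta\zeta^{\ast}}$ together with $\Lambda=C_{P}H^{\ast}_{\mathtt{max}}a_{\rm min}^{-1/2}$ to reduce the claim to
\begin{equation*}
2\sqrt{2}\,k\Xi\,d_{\mathtt{max}}C_{\rm stab}(k)+4\sqrt{2}\,k\Xi\,d_{\mathtt{max}}\Lambda+18\,k\Xi\,C_{\chi}C_{P}\big(a_{\rm max}/a_{\rm min}\big)^{1/2}\Lambda\le 1 .
\end{equation*}
The first summand is $\le 1/4$ by the first bound in \cref{eq:4-5-17}. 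For the third summand, writing $\Lambda=C_{P}H^{\ast}_{\mathtt{max}}a_{\rm min}^{-1/2}$ and plugging in the second bound in \cref{eq:4-5-17} yields $\le 18/36=1/2$. The cross term $4\sqrt{2}\,k\Xi\,d_{\mathtt{max}}\Lambda$ is the product of two quantities already controlled by \cref{eq:4-5-17}; using $\Lambda\le 1/(\sqrt{2}kV_{\rm max})$, which is exactly the already-verified \cref{eq:4-5-1}, turns it into $4C_{\mathcal{B}}\sqrt{\zeta\zeta^{\ast}}d_{\mathtt{max}}$, and the first bound in \cref{eq:4-5-17} then makes it $\le 1/4$ provided $kC_{\rm stab}(k)V_{\rm max}$ is bounded below by an absolute constant (a harmless normalisation in the high-frequency regime, again matching the WLOG absorptions used in \cref{lem:4-3}). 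Summing the three contributions gives $2C_{\mathcal{B}}kV_{\rm max}\,\eta(S_{n}(\Omega))\le 1$, i.e., \cref{eq:4-6}, and \cref{thm:4-1} yields \cref{eq:4-5-18}.

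I expect the only genuine work here to be the constant bookkeeping — in particular pinning down the cross term, which couples $d_{\mathtt{max}}$ and the oversampling scale through $\Lambda$ and which is responsible for the specific numerical factors $8\sqrt{2}$ and $36$ chosen in \cref{eq:4-5-17}, so that each of the three pieces lands strictly below its allotted fraction of $1$. There is no conceptual obstacle: the entire argument is an assembly of \cref{thm:4-1} and \cref{lem:4-3}, with care taken only that the implicit normalisations (such as $C_{\chi}C_{P}(a_{\rm max}/a_{\rm min})^{1/2}\ge 1$) remain consistent with those invoked earlier.
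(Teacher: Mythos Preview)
Your proposal is correct and matches the paper's own proof exactly: the paper simply states that the assumptions \cref{eq:4-5-1} and \cref{eq:4-6} are implied by \cref{eq:4-5-17}, and then invokes \cref{lem:4-3} and \cref{thm:4-1}. You have supplied the constant bookkeeping that the paper leaves implicit, but the structure is identical.
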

\begin{proof}
The assumptions \cref{eq:4-5-1,eq:4-6} are implied by \cref{eq:4-5-17} and the result follows from \cref{lem:4-3,thm:4-1}.
\end{proof}
\begin{rem}
It follows from \cref{ass:1-2} and \cref{thm:3-1} that the first condition in \cref{eq:4-5-17} is satisfied if the sizes of the local subspaces grow polylogarithmically in $k$ (\cm{provided that the local eigenproblems are solved sufficiently accurately}). The second condition is equivalent to $Hk=O(H^{\ast}_{\mathtt{max}}k) =O(1)$, where $H$ denotes the size of the subdomains. Under these conditions, \cref{lem:4-1}, \cref{thm:3-1}, and \cref{eq:4-5-18} imply a nearly exponential convergence rate of the method. However, the second condition is very stringent in the high frequency regime, and our numerical results in \cref{sec-6} show that it is not necessary in practice to obtain near-exponential convergence.
\end{rem}

\section{Discrete MS-GFEM}\label{sec-4}
In the rest of this paper, we assume that $\Omega$ is a Lipschitz polyhedral domain for simplicity. Let $\tau_{h}=\{K\}$ be a shape-regular triangulation of $\Omega$ consisting of triangles (tetrahedra) or rectangles if $\Omega$ is a rectangular domain. The mesh size 
$h:=\max_{K\in \tau_{h}} {\rm diam}(K)$ is assumed to be sufficiently small to resolve the high frequency features of the wave and the fine-scale details of the coefficients.
Let $U_{h}\subset H^{1}(\Omega)$ be a standard Lagrange finite element space. For simplicity, we take $U_{h}$ to be the space consisting of continuous piecewise linear functions or the $Q1$ bilinear space for a rectangular subdivision. Let $U_{h,D} = U_{h}\cap H_{D}^{1}(\Omega)$. The standard finite element method for the problem \cref{eq:1-2} is: Find $u^{\mathdutchcal{e}}_{h}\in U_{h,D}$ such that
\begin{equation}\label{eq:5-1}
\mathcal{B}(u^{\mathdutchcal{e}}_{h},v_{h}) = F(v_{h})\qquad \forall v_{h}\in U_{h,D}.
\end{equation}

In what follows, we introduce the discrete MS-GFEM for solving the problem \cref{eq:5-1} in parallel with the continuous MS-GFEM in \cref{sec-2}. Let $\{\omega_{i}\}_{i=1}^{M}$ be an overlapping decomposition of $\Omega$ resolved by the mesh. We extend each subdomain $\omega_{i}$ by several layers of fine mesh elements to create a larger oversampling domain $\omega_{i}^{\ast}$, and define
\begin{equation}\label{eq:5-2}
\begin{array}{lll}
{\displaystyle  {U}_{h}(\omega^{\ast}_{i}) = \big\{v_{h}|_{\omega^{\ast}_{i}}\;:\; v_{h}\in U_{h}\big\},}\\[2mm]
{\displaystyle {U}_{h,D}(\omega^{\ast}_{i}) = \big\{v_{h}\in U_{h}(\omega_{i}^{\ast}):\; v_{h} = 0 \;\;{\rm on}\;\, \partial \omega^{\ast}_{i} \cap \Gamma_{D}\big\},  }\\[2mm]
{\displaystyle U_{h,DI}(\omega^{\ast}_i)= \big\{v_{h}\in {U}_{h}(\omega^{\ast}_{i}):\;v_{h} = 0 \;\;{\rm on}\;\, \partial \omega^{\ast}_{i} \cap (\Omega\cup\Gamma_{D})\big\}, }\\[2mm]
{\displaystyle H_{h,\mathcal{B}}(\omega^{\ast}_i)= \big\{u_{h}\in U_{h,D}(\omega^{\ast}_i)\;:\; \mathcal{B}_{\omega^{\ast}_{i}}(u_{h},v_{h}) = 0,\;\, \forall v_{h}\in  U_{h,DI}(\omega^{\ast}_i)\big\},}
\end{array}
\end{equation}
where $H_{h,\mathcal{B}}(\omega^{\ast}_i)$ is referred to as the \emph{discrete generalized harmonic space} and we note that $H_{h,\mathcal{B}}(\omega^{\ast}_i) \nsubseteq H_{\mathcal{B}}(\omega^{\ast}_i)$. Similar to \cref{eq:2-3-0}, there exists $C>0$ independent of $h$, such that for any $u_{h}\in H_{h,\mathcal{B}}(\omega^{\ast}_{i})$,
\begin{equation}\label{eq:5-2-0}
\Vert u_{h}\Vert_{L^{2}(\omega_{i}^{\ast})}\leq C\Vert \nabla u_{h} \Vert_{L^{2}(\omega_{i}^{\ast})}.
\end{equation}
The proof of \cref{eq:5-2-0} is given in \cref{sec:A.4}. Therefore, $\Vert \cdot\Vert_{\mathcal{A},\omega_{i}^{\ast}}$ is also a norm on $H_{h,\mathcal{B}}(\omega^{\ast}_{i})$ equivalent to the standard $H^{1}$ norm. Next we introduce the discrete local Helmholtz problem: Find $\psi_{h,i}\in {U}_{h,D}(\omega^{\ast}_{i})$ such that
\begin{equation}\label{eq:5-3}
\mathcal{B}_{\omega_{i}^{\ast}}(\psi_{h,i},v_{h}) - {\rm i}k\int_{\partial \omega^{\ast}_{i}\cap \Omega}V\psi_{h,i}\overline{v_{h}}\,d{\bm s}= F_{\omega_{i}^{\ast}}(v_{h}),\quad \forall v_{h}\in {U}_{h,D}(\omega^{\ast}_{i}).
\end{equation}

We proceed to construct the optimal spaces for approximating a discrete generalized harmonic function in the same spirit as before. Let $I_{h}:C(\Omega)\rightarrow U_{h}$ be the standard Lagrange interpolation operator. We define the operator 
\begin{equation}\label{eq:5-3-0}
P_{h,i}:  H_{h,\mathcal{B}}(\omega^{\ast}_i)\rightarrow U_{h,DI}(\omega_i)\quad {\rm such \;\,that} \quad P_{h,i}v_{h} = I_{h}(\chi_{i}v_{h}),
\end{equation}
where $\chi_{i}$ is the partition of unity function supported on $\omega_{i}$. For each $n\in \mathbb{N}$, we consider the Kolmogorov $n$-width of $P_{h,i}$ defined by
\begin{equation}\label{eq:5-5}
d_{h,n}(\omega_{i},\omega_{i}^{\ast}) = \inf_{Q(n)\subset U_{h,DI}(\omega_{i})}\sup_{u_{h}\in H_{h,\mathcal{B}}(\omega^{\ast}_i)} \inf_{v_{h}\in Q(n)}\frac {\Vert P_{h,i}u_{h}-v_{h}\Vert_{\mathcal{A},\omega_{i},k}}{\Vert u_{h} \Vert_{\mathcal{A},\omega_{i}^{\ast}}}.
\end{equation}
Similar to \cref{lem:2-1}, we have the following characterization of the $n$-width. The proof is omitted.
\begin{lemma}\label{lem:5-1}
For each $j\in\mathbb{N}$, let $(\lambda_{h,j},\phi_{h,j})$ be the $j$-th eigenpair (arranged in decreasing order) of the problem 
\begin{equation}\label{eq:5-6}
\mathcal{A}_{\omega^{\ast}_{i},k}(I_{h}(\chi_{i} \phi_{h}), I_{h}(\chi_{i} v_{h})) = \lambda_{h}\,\mathcal{A}_{\omega_{i}^{\ast}}(\phi_{h}, v_{h}),\quad \forall v_{h}\in H_{h,\mathcal{B}}(\omega_{i}^{\ast}).
\end{equation}
Then $d_{h,n}(\omega_{i},\omega_{i}^{\ast}) =\lambda^{1/2}_{h,n+1}$ and the optimal approximation space is given by 
\begin{equation}\label{eq:5-7}
\hat{Q}(n) = {\rm span}\big\{I_{h}(\chi_{i} \phi_{h,1}), \cdots, I_{h}(\chi_{i} \phi_{h,n})\big\}.
\end{equation}
\end{lemma}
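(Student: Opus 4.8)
The plan is to mimic the proof of Lemma~\ref{lem:2-1} verbatim at the discrete level, exploiting the fact that $H_{h,\mathcal{B}}(\omega_i^{\ast})$ and $U_{h,DI}(\omega_i)$ are finite-dimensional Hilbert spaces under the norms $\Vert\cdot\Vert_{\mathcal{A},\omega_i^{\ast}}$ and $\Vert\cdot\Vert_{\mathcal{A},\omega_i,k}$ respectively. First I would observe that $\Vert\cdot\Vert_{\mathcal{A},\omega_i^{\ast}}$ is indeed a norm on $H_{h,\mathcal{B}}(\omega_i^{\ast})$ by \cref{eq:5-2-0}, and that $\Vert\cdot\Vert_{\mathcal{A},\omega_i,k}$ is trivially a norm on the finite-dimensional space $U_{h,DI}(\omega_i)$ since its functions vanish on $\partial\omega_i\cap(\Omega\cup\Gamma_D)$ and $k^2V^2>0$. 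Since both spaces are finite-dimensional, the operator $P_{h,i}$ defined in \cref{eq:5-3-0} is automatically compact (indeed bounded with finite rank), so the abstract $n$-width theory of \cite[Theorem 2.5]{pinkus1985n} applies directly.

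Next I would introduce the adjoint $P_{h,i}^{\ast}: U_{h,DI}(\omega_i)\to H_{h,\mathcal{B}}(\omega_i^{\ast})$ of $P_{h,i}$ with respect to the $\mathcal{A}_{\omega_i^{\ast}}(\cdot,\cdot)$ inner product on the source space and the $\mathcal{A}_{\omega_i,k}(\cdot,\cdot)$ inner product on the target space. By \cite[Theorem 2.5]{pinkus1985n}, the $n$-width $d_{h,n}(\omega_i,\omega_i^{\ast})$ equals $\lambda_{h,n+1}^{1/2}$, where $\lambda_{h,j}$ are the eigenvalues (in decreasing order) of $P_{h,i}^{\ast}P_{h,i}\phi_h=\lambda_h\phi_h$ on $H_{h,\mathcal{B}}(\omega_i^{\ast})$, and the optimal space is $\hat Q(n)={\rm span}\{P_{h,i}\phi_{h,1},\dots,P_{h,i}\phi_{h,n}\}={\rm span}\{I_h(\chi_i\phi_{h,1}),\dots,I_h(\chi_i\phi_{h,n})\}$. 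It then remains to check that the eigenvalue problem \cref{eq:5-6} is exactly the variational (weak) form of $P_{h,i}^{\ast}P_{h,i}\phi_h=\lambda_h\phi_h$: for $\phi_h\in H_{h,\mathcal{B}}(\omega_i^{\ast})$ and all test functions $v_h\in H_{h,\mathcal{B}}(\omega_i^{\ast})$,
\begin{equation*}
\mathcal{A}_{\omega_i^{\ast}}(P_{h,i}^{\ast}P_{h,i}\phi_h,v_h)
= \mathcal{A}_{\omega_i,k}(P_{h,i}\phi_h,P_{h,i}v_h)
= \mathcal{A}_{\omega_i^{\ast},k}\big(I_h(\chi_i\phi_h),I_h(\chi_i v_h)\big),
\end{equation*}
using the definition of the adjoint in the first equality and \cref{eq:5-3-0} together with the fact that $I_h(\chi_i v_h)$ is supported in $\omega_i$ in the second; hence $\mathcal{A}_{\omega_i^{\ast},k}(I_h(\chi_i\phi_h),I_h(\chi_i v_h))=\lambda_h\,\mathcal{A}_{\omega_i^{\ast}}(\phi_h,v_h)$ for all $v_h$, which is \cref{eq:5-6}.

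The only genuinely delicate point — and the place where the discrete proof departs from the continuous one — is that here the partition-of-unity multiplication is followed by the interpolation operator $I_h$, so one must be careful that $P_{h,i}$ really maps into $U_{h,DI}(\omega_i)$: since $\chi_i$ vanishes outside $\omega_i$ and $v_h\in U_{h,D}(\omega_i^{\ast})$ vanishes on $\partial\omega_i^{\ast}\cap\Gamma_D$, the nodal interpolant $I_h(\chi_i v_h)$ vanishes at all mesh nodes on $\partial\omega_i\cap(\Omega\cup\Gamma_D)$ and extends by zero to a function in $U_{h,DI}(\omega_i)\subset U_{h,DI}$. Apart from this bookkeeping, no compactness argument via Sobolev embedding is needed (everything is finite-dimensional), so the proof is shorter than in the continuous case; this is why the authors state it may be omitted. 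I would close by remarking that, unlike \cref{eq:2-15}, the identity from \cref{thm:2-1} is \emph{not} available here because discrete generalized harmonic functions do not satisfy the continuous PDE, so \cref{eq:5-6} cannot be simplified to a weighted-$L^2$ eigenproblem as in Remark~\ref{rem:3-2}.
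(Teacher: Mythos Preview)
Your proposal is correct and follows exactly the approach the paper intends: the paper omits the proof of Lemma~\ref{lem:5-1} because it is identical in structure to that of Lemma~\ref{lem:2-1}, namely introducing the adjoint $P_{h,i}^{\ast}$, invoking \cite[Theorem~2.5]{pinkus1985n} for the $n$-width of a compact operator, and identifying \cref{eq:5-6} as the variational form of $P_{h,i}^{\ast}P_{h,i}\phi_h=\lambda_h\phi_h$. Your additional bookkeeping (that $P_{h,i}$ maps into $U_{h,DI}(\omega_i)$ and that compactness is automatic in finite dimensions) and your closing remark that Lemma~\ref{thm:2-1} does not carry over discretely are both accurate and useful observations.
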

\begin{rem}
The way that we define the operator $P_{h,i}$ in this paper also works for the positive definite case in \cite{ma2021error} where $P_{h,i}$ was defined in a  slightly different way without involving the interpolation operator.
\end{rem}

Before defining the local particular functions and the local approximation spaces for the discrete MS-GFEM, we make some assumptions on the well-posedness of the discrete problems \cref{eq:5-1,eq:5-3} analogously to the continuous level.
\begin{assumption}\label{ass:5-1}
There exists $h_{0}>0$ such that for any $0<h<h_{0}$, 

\vspace{2mm}
\begin{itemize}
\item[(i)] the problem \cref{eq:5-1} has a unique solution $u^{{\mathdutchcal{e}}}_{h}\in U_{h,D}$, and there exists $\widetilde{C}_{\rm stab}(k)$ depending polynomially on $k$ such that
\begin{equation}\label{eq:5-2-2}
\Vert u^{{\mathdutchcal{e}}}_{h}\Vert_{\mathcal{A},k}\leq \widetilde{C}_{\rm stab}(k) (\Vert f\Vert_{L^{2}(\Omega)} + \Vert g\Vert_{L^{2}(\Gamma_{R})}).
\end{equation}
\item[(ii)] for each $i=1,\cdots,M$, the problem \cref{eq:5-3} is uniquely solvable in ${U}_{h,D}(\omega^{\ast}_{i})$, and there exists $\widetilde{C}_{\rm stab}^{i}(k)$ depending polynomially on $k$ such that
\begin{equation}\label{eq:5-4}
\Vert \psi_{h,i}\Vert_{\mathcal{A},\omega_{i}^{\ast},k}\leq \widetilde{C}^{i}_{\rm stab}(k)(\Vert f \Vert_{L^{2}(\omega_{i}^{\ast})} + \Vert g\Vert_{L^{2}(\partial \omega^{\ast}_{i} \cap \Gamma_{R})}).
\end{equation}
\end{itemize}
\end{assumption}
\begin{rem}\label{rem:5-1}
The unique solvability of the discrete problem \cref{eq:5-1} can be implied by that of the continuous problem \cref{eq:1-2}, as well as that of the continuous adjoint problem with the right-hand term in a weaker dual space; see the remark after \cref{lem:5-4}. Similar results hold for the local problems \cref{eq:5-3}. \cm{Furthermore, it can be proved that for fine meshes, the stability constants $\widetilde{C}_{\rm stab}(k)$ and $\widetilde{C}^{i}_{\rm stab}(k)$ are close to their continuous counterparts, i.e., ${C}_{\rm stab}(k)$ and ${C}^{i}_{\rm stab}(k)$; see, e.g., \cite[Proposition 8.2.7]{melenk1995generalized}, and \cite[Theorem 2]{schatz1996some}}.  
\end{rem}

\begin{theorem}\label{thm:5-1}
Let the local particular function and the local approximation space on $\omega_{i}$ be defined as
\begin{equation}\label{eq:5-8}
u^{p}_{h,i}:=\psi_{h,i}|_{\omega_{i}}\;\;\; {\rm and}\;\;\; S_{h,n_{i}}(\omega_{i}) := {\rm span}\{\phi_{h,1}|_{\omega_{i}},\cdots, \phi_{h,n_{i}}|_{\omega_{i}}\},
\end{equation}
where $\psi_{h,i}$ is the solution of \cref{eq:5-3} and $\phi_{h,j}$ denotes the $j$-th eigenfunction of the problem \cref{eq:5-6}. Then, 
\begin{equation}\label{eq:5-9}
\inf_{\varphi_{h}\in u^{p}_{h,i} + S_{h,n_{i}}(\omega_{i})}\big\Vert I_{h}\big(\chi_{i}(u^{\mathdutchcal{e}}_{h} - \varphi_{h})\big)\big\Vert_{\mathcal{A},\omega_{i},k}\leq d_{h,n_{i}}(\omega_{i},\omega_{i}^{\ast})\,\Vert u^{{\mathdutchcal{e}}}_{h}- \psi_{h,i}\Vert_{\mathcal{A},\omega_{i}^{\ast}}.
\end{equation}
\end{theorem}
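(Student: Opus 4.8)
The plan is to transcribe the proof of \cref{thm:2-2} to the discrete setting; the only genuinely new point is to verify that the difference between the discrete global solution and the discrete local particular function lies in the discrete generalized harmonic space, after which \cref{lem:5-1} does the rest, modulo one extra use of the linearity of $I_{h}$.

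\textbf{Step 1 (local Galerkin orthogonality).} First I would show $u^{\mathdutchcal{e}}_{h}|_{\omega_{i}^{\ast}}-\psi_{h,i}\in H_{h,\mathcal{B}}(\omega^{\ast}_i)$. Since $u^{\mathdutchcal{e}}_{h}\in U_{h,D}$ vanishes on $\Gamma_{D}$ and $\psi_{h,i}\in U_{h,D}(\omega^{\ast}_i)$, both restrictions lie in $U_{h,D}(\omega^{\ast}_i)$, so it remains to test against $U_{h,DI}(\omega^{\ast}_i)$. Given $v_{h}\in U_{h,DI}(\omega^{\ast}_i)$, extend it by zero outside $\omega_{i}^{\ast}$; because $v_{h}=0$ on $\partial\omega^{\ast}_i\cap(\Omega\cup\Gamma_{D})$, this extension still lies in $U_{h,D}$ and is supported in $\overline{\omega_{i}^{\ast}}$. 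Consequently $\mathcal{B}(u^{\mathdutchcal{e}}_{h},v_{h})=\mathcal{B}_{\omega_{i}^{\ast}}(u^{\mathdutchcal{e}}_{h},v_{h})$ and $F(v_{h})=F_{\omega_{i}^{\ast}}(v_{h})$, so \cref{eq:5-1} gives $\mathcal{B}_{\omega_{i}^{\ast}}(u^{\mathdutchcal{e}}_{h},v_{h})=F_{\omega_{i}^{\ast}}(v_{h})$. On the other hand, in \cref{eq:5-3} the interior impedance term $-\mathrm{i}k\int_{\partial\omega^{\ast}_i\cap\Omega}V\psi_{h,i}\overline{v_{h}}\,d{\bm s}$ vanishes since $v_{h}=0$ on $\partial\omega^{\ast}_i\cap\Omega$, whence $\mathcal{B}_{\omega_{i}^{\ast}}(\psi_{h,i},v_{h})=F_{\omega_{i}^{\ast}}(v_{h})$ too. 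Subtracting the two identities yields $\mathcal{B}_{\omega_{i}^{\ast}}(u^{\mathdutchcal{e}}_{h}-\psi_{h,i},v_{h})=0$ for all $v_{h}\in U_{h,DI}(\omega^{\ast}_i)$, i.e., the claimed membership.

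\textbf{Step 2 (apply the $n$-width characterization and translate back).} Setting $u_{h}:=u^{\mathdutchcal{e}}_{h}|_{\omega_{i}^{\ast}}-\psi_{h,i}\in H_{h,\mathcal{B}}(\omega^{\ast}_i)$, \cref{lem:5-1} and the definition \cref{eq:5-5} of $d_{h,n}(\omega_{i},\omega_{i}^{\ast})$ furnish a $v_{h}^{\ast}\in\hat{Q}(n_{i})=\mathrm{span}\{I_{h}(\chi_{i}\phi_{h,1}),\ldots,I_{h}(\chi_{i}\phi_{h,n_{i}})\}$ (the infimum over this finite-dimensional space being attained) with
\[
\big\Vert P_{h,i}u_{h}-v_{h}^{\ast}\big\Vert_{\mathcal{A},\omega_{i},k}\leq d_{h,n_{i}}(\omega_{i},\omega_{i}^{\ast})\,\Vert u_{h}\Vert_{\mathcal{A},\omega_{i}^{\ast}}=d_{h,n_{i}}(\omega_{i},\omega_{i}^{\ast})\,\Vert u^{\mathdutchcal{e}}_{h}-\psi_{h,i}\Vert_{\mathcal{A},\omega_{i}^{\ast}}.
\]
Writing $v_{h}^{\ast}=\sum_{j=1}^{n_{i}}c_{j}\,I_{h}(\chi_{i}\phi_{h,j})$ and using linearity of $I_{h}$ and of multiplication by $\chi_{i}$, one has $P_{h,i}u_{h}-v_{h}^{\ast}=I_{h}\big(\chi_{i}(u^{\mathdutchcal{e}}_{h}-\psi_{h,i})\big)-\sum_{j}c_{j}I_{h}(\chi_{i}\phi_{h,j})=I_{h}\big(\chi_{i}(u^{\mathdutchcal{e}}_{h}-\varphi_{h})\big)$, where $\varphi_{h}:=\psi_{h,i}|_{\omega_{i}}+\sum_{j=1}^{n_{i}}c_{j}\phi_{h,j}|_{\omega_{i}}\in u^{p}_{h,i}+S_{h,n_{i}}(\omega_{i})$. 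Taking the infimum over such $\varphi_{h}$ gives exactly \cref{eq:5-9}.

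\textbf{Main obstacle.} Essentially all the content is in Step~1: the localization of the discrete global equation, namely checking that zero-extensions of $U_{h,DI}(\omega^{\ast}_i)$-functions are admissible global test functions, that the global and local forms and right-hand sides then coincide on them, and, crucially, that the interior impedance term introduced in \cref{eq:5-3} is invisible to precisely these test functions. Once this is in place, the remainder is a verbatim copy of the continuous argument with $I_{h}(\chi_{i}\cdot)$ replacing $\chi_{i}\cdot$.
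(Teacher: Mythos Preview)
Your proof is correct and follows exactly the same approach as the paper: the paper's proof is a single sentence noting that $u^{\mathdutchcal{e}}_{h}|_{\omega_{i}^{\ast}}-\psi_{h,i}\in H_{h,\mathcal{B}}(\omega^{\ast}_i)$ and then invoking \cref{lem:5-1} together with the definition of the $n$-width, while you have simply spelled out the verification of this membership (including the vanishing of the interior impedance term on $U_{h,DI}(\omega^{\ast}_i)$) and the linearity-of-$I_h$ bookkeeping that the paper leaves implicit.
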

\begin{proof}
Noting that $u^{\mathdutchcal{e}}_{h}|_{\omega_{i}^{\ast}}-\psi_{h,i} \in H_{h,\mathcal{B}}(\omega^{\ast}_i)$, \cref{eq:5-9} follows from \cref{lem:5-1} and the definition of the $n$-width.
\end{proof}

\cm{
\begin{rem}
In practice, the local eigenproblems \cref{eq:5-6} are solved using a similar technique as in \cite{ma2021error} where a Lagrange multiplier was introduced to relax the generalized harmonic constraint. Specifically, we solve the eigenproblems in mixed formulation: Find $\lambda_{h}\in\mathbb{R}$, $\phi_{h}\in U_{h,D}(\omega_{i}^{\ast})$, and $p_{h}\in U_{h,DI}(\omega_{i}^{\ast})$ such that
\begin{equation}\label{eq:5-9-1}
\begin{aligned}
\mathcal{A}_{\omega_{i}^{\ast}}(\phi_h,v_{h}) + \mathcal{B}_{\omega_{i}^{\ast}}(v_h,p_h)  =&\, \lambda^{-1}_{h}\mathcal{A}_{\omega^{\ast}_{i},k}\big(I_{h}(\chi_{i} \phi_{h}), I_{h}(\chi_{i} v_h)\big) \;\;\;\, \forall v_h\in U_{h,D}(\omega_{i}^{\ast}),\\
\mathcal{B}_{\omega_{i}^{\ast}}(\phi_{h},\xi_h) =&\;0\qquad \qquad\qquad\qquad\qquad\qquad\quad \;\forall \xi_{h}\in U_{h,DI}(\omega_{i}^{\ast}).
\end{aligned}
\end{equation}
It can be shown that if $h$ is sufficiently small such that the projection from $H_{D}^{1}(\omega_{i}^{\ast})$ onto $U_{h,D}(\omega_{i}^{\ast})$ with respect to the form $\mathcal{B}_{\omega_{i}^{\ast}}(\cdot,\cdot)$ is well defined, then the matrix on the left-hand side of \cref{eq:5-9-1} is invertible; see \cref{lem:5-6} below.
\end{rem}
}

Now we proceed to define the global particular function and the trial space for the discrete MS-GFEM:
\begin{equation}\label{eq:5-10}
 u_{h}^{p} :=\sum_{i=1}^{M}I_{h}(\chi_{i}u^{p}_{h,i})\;\;\;{\rm and}\;\; \; S_{h}(\Omega):= \Big\{ \sum_{i=1}^{M} I_{h}(\chi_{i}v_{h,i}):\; v_{h,i}\in S_{h,n_i}(\omega_{i}) \Big\}. 
\end{equation}
The last step is to solve the discrete problem on $S_{h}(\Omega)$: Find $u^{G}_{h} = u_{h}^{p} + u_{h}^{s}$, where $u_{h}^{s}\in S_{h}(\Omega)$, such that
\begin{equation}\label{eq:5-11}
\mathcal{B}(u_{h}^{G},\,v_{h}) = F(v_{h}) \quad \forall v_{h}\in S_{h}(\Omega).
\end{equation}
By the definition of $S_{h}(\Omega)$, we see that $S_{h}(\Omega)\subset U_{h,D}$, and thus the discrete MS-GFEM delivers a conforming approximation of the problem \cref{eq:5-1}. As in the continuous case, we have the Galerkin orthogonality:
\begin{equation}\label{eq:5-12}
\mathcal{B}(u^{\mathdutchcal{e}}_{h}-u_{h}^{G},\,v_{h}) = 0\quad \forall v_{h}\in S_{h}(\Omega).
\end{equation}

\subsection{Technical tools}
In this subsection, we present some technical tools that will be used for proving the convergence of the discrete MS-GFEM in the next section. We start with the following superapproximation estimates.
\begin{lemma}[\cite{demlow2011local}]\label{lem:5-2}
Assume that $\eta\in C^{\infty}(\Omega)$ satisfying $|\eta|_{W^{j,\infty}(\Omega)}\leq C\delta^{-j}$ for $0\leq j\leq 2$. Then for each $u_{h}\in U_{h}$ and $K\in \tau_{h}$ with $h_{K} :={\rm diam}(K)\leq \delta$, 
\begin{align}
{\displaystyle \Vert \eta^{2}u_{h}- I_{h}(\eta^{2}u_{h})\Vert_{H^{1}(K)}\leq C\big(\frac{h_{K}}{\delta}\Vert \nabla (\eta u_{h})\Vert_{L^{2}(K)}+\frac{h_{K}}{\delta^{2}}\Vert u_{h}\Vert_{L^{2}(K)}\big),\label{eq:5-13}}\\
{\displaystyle \Vert \eta^{2}u_{h}- I_{h}(\eta^{2}u_{h})\Vert_{L^{2}(K)}\leq C\big(\frac{h^{2}_{K}}{\delta}\Vert \nabla(\eta u_{h})\Vert_{L^{2}(K)}+\frac{h^{2}_{K}}{\delta^{2}}\Vert u_{h}\Vert_{L^{2}(K)}\big),\label{eq:5-14}}
\end{align}
where $I_{h}$ is the standard Lagrange interpolation operator.
\end{lemma}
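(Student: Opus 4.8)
The plan is to argue element by element, exploiting the locality of Lagrange interpolation: for $K\in\tau_h$ and continuous $w$ one has $I_h w|_K = I_K(w|_K)$, where $I_K$ is nodal interpolation onto the local polynomial space $P(K)$ ($P_1$ on a simplex, $Q_1$ on a rectangle). The key initial observation is that $u_h|_K\in P(K)$, so $I_K u_h=u_h$ on $K$; hence for any constant $c$,
\[
\eta^2 u_h - I_K(\eta^2 u_h) = (\eta^2-c)\,u_h - I_K\!\big((\eta^2-c)\,u_h\big).
\]
Taking $c=\eta^2(x_K)$ with $x_K$ the centroid of $K$ and writing $g:=\eta^2-\eta^2(x_K)$, convexity of $K$ and the cut-off hypotheses give $\|g\|_{L^\infty(K)}\le h_K\|\nabla(\eta^2)\|_{L^\infty(K)}\le C\delta^{-1}h_K\|\eta\|_{L^\infty(K)}$ -- a bound that already carries the decay of $\eta$ near $\{\eta=0\}$ -- together with $|g|_{W^{1,\infty}(K)}\le C\delta^{-1}$ and $|g|_{W^{2,\infty}(K)}\le C\delta^{-2}$. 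It thus suffices to bound $\|g u_h - I_K(g u_h)\|$ in the $H^1(K)$ and $L^2(K)$ norms.

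Next I would invoke the standard scaled polynomial interpolation estimate (affine scaling to a reference element plus the Bramble--Hilbert lemma, with constant depending only on $d$ and the shape-regularity of $\tau_h$), which reduces both bounds to an estimate for $|g u_h|_{H^2(K)}$:
\[
\|g u_h - I_K(g u_h)\|_{H^1(K)}\le C h_K\,|g u_h|_{H^2(K)},\qquad \|g u_h - I_K(g u_h)\|_{L^2(K)}\le C h_K^2\,|g u_h|_{H^2(K)}.
\]
By the Leibniz rule the second derivatives of $g u_h$ split into a term with both derivatives on $g$, bounded by $C\delta^{-2}\|u_h\|_{L^2(K)}$; a cross term, which after writing $\nabla g=\nabla(\eta^2)=2\eta\nabla\eta$ and using the \emph{superapproximation regrouping} $\eta\nabla u_h=\nabla(\eta u_h)-u_h\nabla\eta$ is $\le C\delta^{-1}\|\nabla(\eta u_h)\|_{L^2(K)}+C\delta^{-2}\|u_h\|_{L^2(K)}$; and the term $g\,D^2 u_h$, which vanishes for piecewise-linear elements and, for $Q_1$, is handled via an inverse inequality $\|D^2 u_h\|_{L^2(K)}\le Ch_K^{-1}\|\nabla u_h\|_{L^2(K)}$ combined with $\|g\|_{L^\infty(K)}\le C\delta^{-1}h_K\|\eta\|_{L^\infty(K)}$, the small oscillation of $\eta$ on $K$, the same regrouping, and $\|\nabla u_h\|_{L^2(K)}\le Ch_K^{-1}\|u_h\|_{L^2(K)}$, to yield again $\le C\delta^{-1}\|\nabla(\eta u_h)\|_{L^2(K)}+C\delta^{-2}\|u_h\|_{L^2(K)}$. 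Collecting these contributions gives $|g u_h|_{H^2(K)}\le C\big(\delta^{-1}\|\nabla(\eta u_h)\|_{L^2(K)}+\delta^{-2}\|u_h\|_{L^2(K)}\big)$, and inserting this into the interpolation estimates yields exactly \cref{eq:5-13} and \cref{eq:5-14}.

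The main obstacle -- and the whole point of a \emph{superapproximation} estimate -- is obtaining $\|\nabla(\eta u_h)\|_{L^2(K)}$ rather than the cruder $\|\nabla u_h\|_{L^2(K)}$ on the right: the latter does not reflect that $\eta u_h$ is small where $\eta$ is, and would be useless for the patched localization arguments that follow. This gain rests entirely on replacing $\eta^2$ by $g=\eta^2-\eta^2(x_K)$ (legitimate precisely because $I_K$ reproduces $u_h$) and on the algebraic identity $\eta\nabla u_h=\nabla(\eta u_h)-u_h\nabla\eta$. The remaining difficulty is purely bookkeeping: controlling the surviving mixed second derivative of $u_h$ in the $Q_1$ case, where the hypothesis $h_K\le\delta$ keeps all ratios $h_K/\delta$ bounded so that the scaled estimates and inverse inequalities combine cleanly.
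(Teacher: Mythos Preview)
Your argument is correct and is precisely the standard superapproximation proof: subtract a constant so that the interpolant reproduces $u_h$, apply the scaled Bramble--Hilbert estimate to control the interpolation error by $|g u_h|_{H^2(K)}$, expand via Leibniz, and exploit the identity $\eta\nabla u_h=\nabla(\eta u_h)-u_h\nabla\eta$ to convert every $\|\nabla u_h\|$ into the desired $\|\nabla(\eta u_h)\|$ plus lower-order remainders. The treatment of the $Q_1$ case, where the mixed second derivative of $u_h$ survives, is handled correctly by combining the inverse inequality with the small oscillation of $\eta$ on $K$.

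The paper itself does not prove this lemma: it simply cites \cite{demlow2011local} for \cref{eq:5-13} and remarks that \cref{eq:5-14} follows by the same argument, referring also to Wahlbin's monograph. Your sketch is, in essence, a reconstruction of that cited argument (the Nitsche--Schatz superapproximation technique), so there is no discrepancy in approach --- you have simply supplied the details that the paper omits by reference.
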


Estimate \cref{eq:5-13} was proved in Theorem 2.1 of \cite{demlow2011local} and \cref{eq:5-14} can be proved using exactly the same argument; see also \cite[Chapter 3]{wahlbin2006superconvergence}. Next we give the multiplicative trace inequality with an explicit dependence on the diameter of the domain. \cm{A proof can be found in \cite[Lemma 3.12]{stephansen2007methodes}; see also \cite[Theorem 4.1]{carstensen2000constants}}.

\begin{lemma}\label{lem:5-3}
Let $u\in H^{1}(\Omega)$. Then for each $K\in\tau_{h}$, there exists $C>0$ depending only on the shape regularity of the mesh such that
\begin{equation}\label{eq:5-15}
\Vert u\Vert^{2}_{L^{2}(\partial K)}\leq C\big(\Vert u\Vert_{L^{2}(K)}\Vert \nabla u\Vert_{L^{2}(K)} + h^{-1}_{K}\Vert u\Vert^{2}_{L^{2}(K)}\big).
\end{equation}
\end{lemma}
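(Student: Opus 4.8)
The plan is to reduce \cref{eq:5-15} to a scale-invariant trace inequality on a fixed reference element and then transfer it back to an arbitrary mesh element $K$ by an affine change of variables, carefully tracking the powers of $h_{K}$.

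First I would establish, on the reference element $\widehat{K}$ (the unit simplex, or the unit cube in the rectangular case), the estimate
\begin{equation}\label{eq:plan-ref}
\Vert \hat{u}\Vert^{2}_{L^{2}(\partial \widehat{K})}\leq \widehat{C}\big(\Vert \hat{u}\Vert_{L^{2}(\widehat{K})}\Vert \nabla \hat{u}\Vert_{L^{2}(\widehat{K})} + \Vert \hat{u}\Vert^{2}_{L^{2}(\widehat{K})}\big)\qquad \forall\, \hat{u}\in H^{1}(\widehat{K}),
\end{equation}
with $\widehat{C}$ depending only on $\widehat{K}$. The cleanest route is a direct integration-by-parts argument: fix a point $\hat{\bm x}_{0}$ in the interior of $\widehat{K}$ and use the radial field $\bm b(\hat{\bm x}) = \hat{\bm x}-\hat{\bm x}_{0}$, which satisfies $\bm b\cdot\hat{\bm n}\geq c_{0}>0$ on $\partial \widehat{K}$ by convexity. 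Applying the divergence theorem to $|\hat{u}|^{2}\bm b$ for $\hat{u}\in C^{1}(\overline{\widehat{K}})$ gives $c_{0}\Vert \hat{u}\Vert^{2}_{L^{2}(\partial \widehat{K})}\leq \int_{\widehat{K}}\big(|\hat{u}|^{2}\,{\rm div}\,\bm b + \bm b\cdot\nabla(|\hat{u}|^{2})\big)$, and bounding ${\rm div}\,\bm b = d$ and $\nabla(|\hat{u}|^{2}) = 2\,{\rm Re}(\overline{\hat{u}}\,\nabla\hat{u})$ by Cauchy--Schwarz yields \cref{eq:plan-ref} for $\hat{u}\in C^{1}(\overline{\widehat{K}})$; density of $C^{1}(\overline{\widehat{K}})$ in $H^{1}(\widehat{K})$ finishes this step. (Alternatively one could invoke the trace theorem together with a scaling/interpolation argument, but the divergence-theorem proof is self-contained.)

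Next I would pass to a general $K$ through the affine map $F_{K}(\hat{\bm x}) = B_{K}\hat{\bm x} + \bm b_{K}$ with $K = F_{K}(\widehat{K})$ and $\hat{u} = u\circ F_{K}$. Shape regularity of $\tau_{h}$ gives $\Vert B_{K}\Vert\leq C h_{K}$, $\Vert B_{K}^{-1}\Vert\leq C h_{K}^{-1}$, and $|\det B_{K}|\simeq h_{K}^{d}$, hence the norm equivalences $\Vert u\Vert_{L^{2}(K)}^{2}\simeq h_{K}^{d}\Vert \hat{u}\Vert_{L^{2}(\widehat{K})}^{2}$, $\Vert\nabla u\Vert_{L^{2}(K)}^{2}\simeq h_{K}^{d-2}\Vert\nabla\hat{u}\Vert_{L^{2}(\widehat{K})}^{2}$, and $\Vert u\Vert_{L^{2}(\partial K)}^{2}\simeq h_{K}^{d-1}\Vert\hat{u}\Vert_{L^{2}(\partial\widehat{K})}^{2}$, all with constants depending only on the shape-regularity parameter (and on $d$). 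Inserting these into \cref{eq:plan-ref} and multiplying by $h_{K}^{d-1}$ produces exactly \cref{eq:5-15}: the factor $h_{K}^{-1}$ in front of $\Vert u\Vert_{L^{2}(K)}^{2}$ comes from the mismatch between the $h_{K}^{d}$ scaling of $\Vert u\Vert_{L^{2}(K)}^{2}$ and the $h_{K}^{d-1}$ scaling of $\Vert u\Vert_{L^{2}(\partial K)}^{2}$.

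I do not expect a genuine obstacle here, since the statement is a classical local trace inequality. The only points requiring a little care are: (i) that the lower bound $\bm b\cdot\hat{\bm n}\geq c_{0}>0$ and the constant $\widehat{C}$ can be taken uniform over the finitely many reference shapes actually appearing in $\tau_{h}$ (simplices, and the unit cube for rectangular subdivisions); and (ii) that the surface-measure change of variables $d s = |\det B_{K}|\,|B_{K}^{-T}\hat{\bm n}|\,d\hat{s}$ is bounded above and below by multiples of $h_{K}^{d-1}\,d\hat{s}$, again via shape regularity. Both are routine, and the remainder is just bookkeeping of powers of $h_{K}$.
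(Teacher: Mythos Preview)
Your argument is correct and complete: the divergence-theorem identity with the radial field on the reference element gives \cref{eq:plan-ref}, and the affine scaling with shape-regularity bounds yields \cref{eq:5-15} with the right powers of $h_{K}$.

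The paper itself does not prove this lemma; it simply cites \cite[Lemma 3.12]{stephansen2007methodes} and \cite[Theorem 4.1]{carstensen2000constants}. The argument in the latter reference is essentially the same divergence-theorem trick, but applied directly on the physical element $K$ (using a vector field $\bm b(\bm x)=\bm x-\bm x_{0}$ with $\bm x_{0}$ the incenter of $K$) rather than on a reference element followed by scaling. That direct route avoids the change-of-variables bookkeeping and produces a constant depending on the chunkiness ratio $h_{K}/\rho_{K}$, which is precisely what shape regularity controls. Your reference-element approach is equally valid and arguably cleaner for tracking the $h_{K}$ dependence; the two are interchangeable here.
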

\begin{rem}\label{rem:5-2}
A similar result as \cref{eq:5-15} for the domain $\Omega$ is as follows. For any $u\in H^{1}(\Omega)$, there exists a constant $C$ depending only on the shape of $\Omega$, such that
\begin{equation}\label{eq:5-21}
\Vert u\Vert^{2}_{L^{2}(\partial \Omega)}\leq C\big(\Vert u\Vert_{L^{2}(\Omega)}\Vert \nabla u\Vert_{L^{2}(\Omega)} + {\rm diam}(\Omega)^{-1}\Vert u\Vert^{2}_{L^{2}(\Omega)}\big).
\end{equation}
\end{rem}

The following lemma gives a uniform approximation result for compact subsets of $H^{1}_{DI}(\omega)$ in $U_{h,DI}(\omega)$, which will be used in the proof of the convergence of eigenvalues.
\begin{lemma}[\cite{schatz1996some}]\label{lem:5-7}
Let $\mathcal{S}$ be a fixed compact subset of $H^{1}_{DI}(\omega)$. For any $\varepsilon>0$, there exists $h_{0} =h_{0}(\mathcal{S},\varepsilon)$ such that if $0< h\leq h_{0}$, for each $v\in \mathcal{S}$, there exists a $v_{h} \in U_{h,DI}(\omega)$ satisfying
\begin{equation}\label{eq:5-40}
\Vert v-v_{h}\Vert_{H^{1}(\omega)}\leq \varepsilon.
\end{equation}
\end{lemma}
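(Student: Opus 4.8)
The plan is to prove the statement by a compactness/covering argument combined with a single uniform density fact about the finite element spaces $U_{h,DI}(\omega)$ in $H^1_{DI}(\omega)$. First I would recall that, because $\tau_h$ is a shape-regular family resolving $\omega$ and $U_h$ is a standard Lagrange space, the nodal interpolant (or, to avoid regularity issues, the Scott--Zhang/Cl\'ement quasi-interpolant $\Pi_h$ adapted to preserve the homogeneous boundary conditions on $\partial\omega\cap(\Omega\cup\Gamma_D)$) maps $H^1_{DI}(\omega)$ into $U_{h,DI}(\omega)$ and satisfies the standard approximation bound $\Vert v-\Pi_h v\Vert_{H^1(\omega)}\le C(\omega)\,h^{s-1}\,\Vert v\Vert_{H^s(\omega)}$ for $1\le s\le 2$, and more basically $\Vert v-\Pi_h v\Vert_{H^1(\omega)}\to 0$ as $h\to 0$ for every fixed $v\in H^1_{DI}(\omega)$ (density of smooth functions plus stability of $\Pi_h$ in $H^1$). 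The key point is that this convergence is \emph{pointwise in $v$}, and one must upgrade it to uniformity over the compact set $\mathcal{S}$.

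The main step is the covering argument. Fix $\varepsilon>0$. Since $\mathcal{S}$ is compact in $H^1_{DI}(\omega)$, it is totally bounded, so there exist finitely many functions $v^{(1)},\dots,v^{(N)}\in \mathcal{S}$ such that the balls $B(v^{(j)},\varepsilon/3)$ cover $\mathcal{S}$. For each $j=1,\dots,N$, pointwise convergence of $\Pi_h v^{(j)}$ to $v^{(j)}$ in $H^1(\omega)$ yields an $h_j>0$ with $\Vert v^{(j)}-\Pi_h v^{(j)}\Vert_{H^1(\omega)}\le \varepsilon/3$ for all $0<h\le h_j$; set $h_0:=\min_{1\le j\le N} h_j$, which depends only on $\mathcal{S}$ and $\varepsilon$. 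Now given any $v\in\mathcal{S}$, pick $j$ with $\Vert v-v^{(j)}\Vert_{H^1(\omega)}\le \varepsilon/3$, let $v_h:=\Pi_h v^{(j)}\in U_{h,DI}(\omega)$, and estimate, using the triangle inequality and the $H^1$-stability of $\Pi_h$ (so that $\Vert \Pi_h(v-v^{(j)})\Vert_{H^1(\omega)}\le C\varepsilon/3$),
\begin{equation*}
\Vert v-v_h\Vert_{H^1(\omega)}\le \Vert v-v^{(j)}\Vert_{H^1(\omega)}+\Vert v^{(j)}-\Pi_h v^{(j)}\Vert_{H^1(\omega)}\le \tfrac{2\varepsilon}{3}.
\end{equation*}
(Choosing the finite net and $h_0$ slightly more carefully — e.g. a net of size $\varepsilon/(3(1+C))$ and taking $v_h=\Pi_h v$ directly — removes the stray constant and gives the clean bound $\varepsilon$; either way the conclusion \cref{eq:5-40} follows.)

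The step I expect to be the main (though mild) obstacle is ensuring that the quasi-interpolant genuinely lands in the constrained space $U_{h,DI}(\omega)$ — i.e. that it exactly preserves the zero trace on $\partial\omega\cap(\Omega\cup\Gamma_D)$ — while retaining both the $H^1$-stability and the pointwise $H^1$-convergence used above; this is where one must be slightly careful with the boundary-adapted Scott--Zhang construction on the (possibly non-matching) cut boundary $\partial\omega^\ast\cap\Omega$. Since the lemma is quoted from \cite{schatz1996some} and only a local $H^1$-approximation is needed, this is standard, and no wavenumber- or coefficient-dependence enters. The rest is the routine $\varepsilon/3$ compactness argument above.
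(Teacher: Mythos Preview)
The paper does not supply its own proof of this lemma; it is simply quoted as a known result from \cite{schatz1996some}. Your covering argument is exactly the standard route to this kind of uniform-approximation-on-compacta statement and is correct: total boundedness of $\mathcal{S}$ reduces the problem to finitely many fixed functions, for each of which one has pointwise $H^1$-convergence of a boundary-preserving quasi-interpolant, and the triangle inequality closes the argument. Your identification of the only delicate point---that the Scott--Zhang operator must be built so as to land in $U_{h,DI}(\omega)$---is apt; since in this paper's setting both $\omega$ and $\omega^\ast$ are resolved by the mesh, the mesh faces align with $\partial\omega\cap(\Omega\cup\Gamma_D)$ and the standard construction applies without difficulty. (Minor remark: in the displayed estimate you do not actually use the $H^1$-stability of $\Pi_h$, since $v_h=\Pi_h v^{(j)}$ already gives $\Vert v-v_h\Vert\le \Vert v-v^{(j)}\Vert+\Vert v^{(j)}-\Pi_h v^{(j)}\Vert\le 2\varepsilon/3$ directly; the stability is only needed if you insist on $v_h=\Pi_h v$.)
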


To prove the convergence of eigenvalues, we also need some local projections. For each $i=1,\cdots,M$, let us define $\Pi^{i}_{h}: H_{D}^{1}(\omega_{i}^{\ast})\rightarrow U_{h,D}(\omega_{i}^{\ast})$ by
\begin{equation}\label{eq:5-22}
\mathcal{B}_{\omega_{i}^{\ast}}(\Pi^{i}_{h}u, v_{h}) = \mathcal{B}_{\omega_{i}^{\ast}}(u, v_{h}) \quad \forall v_{h}\in U_{h,D}(\omega_{i}^{\ast}).
\end{equation}
To ensure the stability and well-definedness of $\Pi^{i}_{h}$, an assumption on the continuous adjoint problem is needed.
\begin{assumption}\label{ass:5-3}
For any $\mathcal{G}\in \big(H^{1}_{D}(\omega_{i}^{\ast})\big)^{\prime}$, the adjoint problem
\begin{equation}\label{eq:5-23}
B_{\omega_{i}^{\ast}}(v, u) = \mathcal{G}(v)\quad \forall v\in H_{D}^{1}(\omega_{i}^{\ast})
\end{equation}
has a unique solution $u\in H_{D}^{1}(\omega_{i}^{\ast})$, and there exists $\cm{C(k)}>0$ such that
\begin{equation}\label{eq:5-24}
\Vert u\Vert_{\mathcal{A},\omega_{i}^{\ast},k}\leq \cm{C(k)}\Vert \mathcal{G}\Vert_{(H^{1}_{D}(\omega_{i}^{\ast}))^{\prime}}.
\end{equation}
\end{assumption}

\begin{lemma}\label{lem:5-4}
Under \cref{ass:5-3}, there exists an $h_{0}(k)>0$, such that for any $0<h<h_{0}(k)$, $\Pi^{i}_{h}$ is well-defined and satisfies 
\begin{equation}\label{eq:5-25}
\Vert \Pi^{i}_{h}u \Vert_{\mathcal{A},\omega_{i}^{\ast},k}\leq C \Vert u\Vert_{\mathcal{A},\omega_{i}^{\ast},k},\;\;\, \Vert u - \Pi^{i}_{h}u\Vert_{\mathcal{A},\omega_{i}^{\ast},k} \leq C\inf_{v_{h}\in U_{h,D}(\omega_{i}^{\ast})}  \Vert u - v_{h}\Vert_{\mathcal{A},\omega_{i}^{\ast},k}
\vspace{-1ex}
\end{equation}
for any $u\in H_{D}^{1}(\omega_{i}^{\ast})$, where $C>0$ is independent of $h$ and $k$.
\end{lemma}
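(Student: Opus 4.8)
\textbf{Proof proposal for \cref{lem:5-4}.}
The plan is to run the classical Schatz/Aubin--Nitsche duality argument for Galerkin projections associated with an indefinite sesquilinear form, tracking constants so that all $k$-dependence is pushed into the smallness threshold $h_{0}(k)$. Write $e:=u-\Pi^{i}_{h}u$ (assuming for the moment that $\Pi^{i}_{h}u$ exists). The starting point is the G\r{a}rding-type splitting, analogous to \cref{eq:4-8},
\[
\|v\|_{\mathcal{A},\omega_{i}^{\ast},k}^{2}=\mathrm{Re}\,\mathcal{B}_{\omega_{i}^{\ast}}(v,v)+2k^{2}\bigl(V^{2}v,v\bigr)_{L^{2}(\omega_{i}^{\ast})}\le|\mathcal{B}_{\omega_{i}^{\ast}}(v,v)|+2k^{2}V_{\rm max}^{2}\|v\|_{L^{2}(\omega_{i}^{\ast})}^{2},\quad v\in H^{1}_{D}(\omega_{i}^{\ast}).
\]
Applying this with $v=e$, using the Galerkin orthogonality from \cref{eq:5-22} (which gives $\mathcal{B}_{\omega_{i}^{\ast}}(e,e)=\mathcal{B}_{\omega_{i}^{\ast}}(e,u-v_{h})$ for every $v_{h}\in U_{h,D}(\omega_{i}^{\ast})$) and the continuity bound \cref{eq:1-5}, one obtains
\[
\|e\|_{\mathcal{A},\omega_{i}^{\ast},k}^{2}\le C_{\mathcal{B}}\,\|e\|_{\mathcal{A},\omega_{i}^{\ast},k}\inf_{v_{h}\in U_{h,D}(\omega_{i}^{\ast})}\|u-v_{h}\|_{\mathcal{A},\omega_{i}^{\ast},k}+2k^{2}V_{\rm max}^{2}\|e\|_{L^{2}(\omega_{i}^{\ast})}^{2}.
\]

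The heart of the proof is to absorb the last term by a duality estimate. Introduce the adjoint-approximability quantity
\[
\rho(h):=\sup_{0\ne g\in L^{2}(\omega_{i}^{\ast})}\ \inf_{v_{h}\in U_{h,D}(\omega_{i}^{\ast})}\frac{\|w_{g}-v_{h}\|_{\mathcal{A},\omega_{i}^{\ast},k}}{\|g\|_{L^{2}(\omega_{i}^{\ast})}},
\]
where $w_{g}\in H^{1}_{D}(\omega_{i}^{\ast})$ solves the adjoint problem \cref{eq:5-23} with $\mathcal{G}=\mathcal{G}_{g}:=(\cdot,g)_{L^{2}(\omega_{i}^{\ast})}$. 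I claim that $\rho(h)\to 0$ as $h\to 0$ for each fixed $k$: the map $g\mapsto\mathcal{G}_{g}$ is compact from $L^{2}(\omega_{i}^{\ast})$ into $\bigl(H^{1}_{D}(\omega_{i}^{\ast})\bigr)'$, being the adjoint of the compact Rellich embedding $H^{1}_{D}(\omega_{i}^{\ast})\hookrightarrow L^{2}(\omega_{i}^{\ast})$, while by \cref{ass:5-3} the adjoint solution operator $\mathcal{G}\mapsto w$ is bounded from $\bigl(H^{1}_{D}(\omega_{i}^{\ast})\bigr)'$ into $H^{1}_{D}(\omega_{i}^{\ast})$; hence $\{w_{g}:\|g\|_{L^{2}(\omega_{i}^{\ast})}\le 1\}$ is relatively compact in $H^{1}_{D}(\omega_{i}^{\ast})$, and \cref{lem:5-7} (or rather its analogue for $H^{1}_{D}(\omega_{i}^{\ast})$), together with $\|\cdot\|_{\mathcal{A},\omega_{i}^{\ast},k}\le C(k)\|\cdot\|_{H^{1}(\omega_{i}^{\ast})}$, shows this set is uniformly approximated to within $o(1)$ by $U_{h,D}(\omega_{i}^{\ast})$. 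Now taking $w:=w_{e}$, we have $\|e\|_{L^{2}(\omega_{i}^{\ast})}^{2}=\mathcal{G}_{e}(e)=\mathcal{B}_{\omega_{i}^{\ast}}(e,w)=\mathcal{B}_{\omega_{i}^{\ast}}(e,w-v_{h})$ for every $v_{h}\in U_{h,D}(\omega_{i}^{\ast})$, so \cref{eq:1-5} and the definition of $\rho(h)$ yield $\|e\|_{L^{2}(\omega_{i}^{\ast})}\le C_{\mathcal{B}}\,\rho(h)\,\|e\|_{\mathcal{A},\omega_{i}^{\ast},k}$.

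Fix $h_{0}(k)>0$ so small that $2C_{\mathcal{B}}^{2}k^{2}V_{\rm max}^{2}\rho(h)^{2}\le\tfrac12$ for $h<h_{0}(k)$, which is possible by the claim. Substituting the $L^{2}$-bound into the previous display and absorbing, one gets $\|e\|_{\mathcal{A},\omega_{i}^{\ast},k}\le 2C_{\mathcal{B}}\inf_{v_{h}}\|u-v_{h}\|_{\mathcal{A},\omega_{i}^{\ast},k}$, which is the quasi-optimality estimate in \cref{eq:5-25} with a constant independent of $h$ and $k$; the stability bound follows by the triangle inequality (choosing $v_{h}=0$). For well-definedness, note that \cref{eq:5-22} is a square linear system on the finite-dimensional space $U_{h,D}(\omega_{i}^{\ast})$, so it suffices to exclude nontrivial solutions $z_{h}$ of $\mathcal{B}_{\omega_{i}^{\ast}}(z_{h},v_{h})=0$ for all $v_{h}\in U_{h,D}(\omega_{i}^{\ast})$; rerunning the argument above with $e$ replaced by $z_{h}$ (so that $\mathcal{B}_{\omega_{i}^{\ast}}(z_{h},z_{h})=0$ and $\|z_{h}\|_{L^{2}(\omega_{i}^{\ast})}\le C_{\mathcal{B}}\rho(h)\|z_{h}\|_{\mathcal{A},\omega_{i}^{\ast},k}$) forces $\|z_{h}\|_{\mathcal{A},\omega_{i}^{\ast},k}=0$, hence $z_{h}=0$ since $\|\cdot\|_{\mathcal{A},\omega_{i}^{\ast},k}$ is a genuine norm on $U_{h,D}(\omega_{i}^{\ast})$. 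The main obstacle is the claim $\rho(h)\to0$: everything else is bookkeeping once the relative compactness of the adjoint solution set in $H^{1}_{D}(\omega_{i}^{\ast})$ is established. An alternative, if one wants $h_{0}(k)$ explicit, is to replace the compactness argument by a Meyers-type higher-integrability/elliptic-regularity estimate for the adjoint solutions, which yields an explicit (mild) algebraic rate for $\rho(h)$ in $h$.
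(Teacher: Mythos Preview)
Your proposal is correct and follows essentially the same Schatz duality argument as the paper. The only differences are presentational: the paper outsources the key estimates \cref{eq:5-26}--\cref{eq:5-26-0} to \cite[Theorem~2]{schatz1996some} while you unpack the compactness argument for $\rho(h)\to 0$ explicitly, and you obtain the stability bound more cheaply from quasi-optimality via the triangle inequality with $v_{h}=0$, whereas the paper tests \cref{eq:5-22} with $v_{h}=\Pi^{i}_{h}u$ and then uses \cref{eq:5-26} to control $\|\Pi^{i}_{h}u\|_{L^{2}}$.
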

\begin{proof}
Since \cref{eq:5-22} is a finite-dimensional linear system, the estimate \cref{eq:5-25} yields the unique solvability of \cref{eq:5-22}. Therefore, it suffices to prove \cref{eq:5-25}. Under \cref{ass:5-3}, it follows from \cite[Theorem 2]{schatz1996some} that for any $\varepsilon>0$, there exists an $h_{0}(\varepsilon,k)>0$, such that for any $0<h<h_{0}(\varepsilon,k)$, 
\begin{align}
{\displaystyle \Vert u - \Pi^{i}_{h}u\Vert_{L^{2}(\omega_{i}^{\ast})}\leq \varepsilon \Vert u - \Pi^{i}_{h}u\Vert_{\mathcal{A},\omega_{i}^{\ast},k},\label{eq:5-26}}\\
{\displaystyle \Vert u - \Pi^{i}_{h}u\Vert_{\mathcal{A},\omega_{i}^{\ast},k} \leq C\inf_{v_{h}\in U_{h,D}(\omega_{i}^{\ast})}  \Vert u - v_{h}\Vert_{\mathcal{A},\omega_{i}^{\ast},k}.\label{eq:5-26-0}}
\end{align}
Therefore, the second part of \cref{eq:5-25} is proved. Next by choosing $v_{h} = \Pi^{i}_{h}u$ in \cref{eq:5-22} and taking the real part of the equation,  we see that
\begin{equation}\label{eq:5-27}
\Vert\Pi^{i}_{h}u \Vert_{\mathcal{A},\omega_{i}^{\ast},k}^{2} \leq C\Vert\Pi^{i}_{h}u \Vert_{\mathcal{A},\omega_{i}^{\ast},k} \Vert u \Vert_{\mathcal{A},\omega_{i}^{\ast},k} + 2k^{2}V^{2}_{\rm max}\Vert\Pi^{i}_{h}u\Vert^{2}_{L^{2}(\omega_{i}^{\ast})}.
\end{equation}
To bound the second term on the right-hand side of \cref{eq:5-27}, we use \cref{eq:5-26} to deduce
\begin{equation}\label{eq:5-28}
\Vert\Pi^{i}_{h}u\Vert_{L^{2}(\omega_{i}^{\ast})}\leq \Vert u\Vert_{L^{2}(\omega_{i}^{\ast})} + \varepsilon \big(\Vert\Pi^{i}_{h}u \Vert_{\mathcal{A},\omega_{i}^{\ast},k} +\Vert u \Vert_{\mathcal{A},\omega_{i}^{\ast},k}\big).
\end{equation}
Inserting \cref{eq:5-28} into \cref{eq:5-27} and taking $\varepsilon$ sufficiently small such that $\varepsilon< (2kV_{\rm max})^{-1}$, we get the first part of \cref{eq:5-25}.
\end{proof}
\begin{rem}\label{rem:5-3}
Using the same argument and a condition similar to  \cref{ass:5-3} on the global adjoint problem, the discrete problem \cref{eq:5-1} is uniquely solvable  for $h$ sufficiently small provided the continuous problem \cref{eq:1-2} is uniquely solvable.
\end{rem}

\begin{rem}
\cm{A $k$-explicit resolution condition on the FE discretization that ensures quasi-optimality of the projection \cref{eq:5-22}, is an important issue and related studies date back at least to \cite{aziz1988two}. A classical technique to derive this condition is the so-called Schatz argument \cite{schatz1974observation}. In essence, it is the approach used in \cref{thm:4-1} with the key argument being the derivation of the estimate in \cref{eq:5-26}. In the case of smooth coefficients and regular geometries, this estimate can be proved by combining standard duality arguments, $k$-explicit regularity results for an adjoint problem (see \cref{eq:4-10}) and standard FE error estimates. In the linear FEM case, it is well known that $h_{0}(k)= O(k^{-2})$ \cite{melenk1995generalized}. Higher-order FEMs result in less restrictive resolution conditions in terms of the dependence on $k$ \cite{lafontaine2022wavenumber,melenk2011wavenumber}. However, in the case of general heterogeneous coefficients, the above proof fails due to the very low regularity (typically $H^{1}$) of the solution to the adjoint problem. This difficulty was circumvented in \cite{schatz1996some} by combining the fact that the solution space of the adjoint problem is a compact set in $H^{1}$ and the uniform approximability of such compact sets in FE spaces with sufficiently small $h$ (\cref{lem:5-7}). However, an explicit dependence of $h_{0}(k)$ on $k$ is not available that way. Furthermore, in view of the very low regularity of the solution, it is unclear whether or not using higher-order FEMs can relax the resolution condition in general.}  
\end{rem}

We end this section by discussing the well-posedness of a saddle point problem arising from an elliptic BVP defined on the generalized harmonic space and its FE approximation.
\begin{lemma}\label{lem:5-5}
Given $\mathcal{F} \in (H^{1}_{D}(\omega^{\ast}_{i}))^{\prime}$, consider the problem of finding $u\in H^{1}_{D}(\omega_{i}^{\ast})$ and $p\in H^{1}_{DI}(\omega_{i}^{\ast})$ such that
\begin{equation}\label{eq:5-29}
\begin{aligned}
\mathcal{A}_{\omega_{i}^{\ast}}(u,v) + \mathcal{B}_{\omega_{i}^{\ast}}(v,p) =&\, \mathcal{F}(v)\quad \forall v\in H^{1}_{D}(\omega_{i}^{\ast}),\\
\mathcal{B}_{\omega_{i}^{\ast}}(u,\xi) =&\;0\qquad\;\; \forall \xi\in H^{1}_{DI}(\omega_{i}^{\ast}).
\end{aligned}
\end{equation}
Under \cref{ass:5-3}, there exists a unique solution $(u,p)$ to \cref{eq:5-29} and 
\begin{equation}\label{eq:5-30}
\Vert u\Vert_{\mathcal{A},\omega_{i}^{\ast},k} + \Vert p\Vert_{\mathcal{A},\omega_{i}^{\ast},k}\leq C\Vert \mathcal{F}\Vert_{(H^{1}_{D}(\omega_{i}^{\ast}))^{\prime}}.
\end{equation}
\end{lemma}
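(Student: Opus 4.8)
The plan is to recognize \cref{eq:5-29} as a saddle-point problem and invoke the standard Babuska--Brezzi theory. I would set $V = H^1_D(\omega_i^{\ast})$ and $Q = H^1_{DI}(\omega_i^{\ast})$, both equipped with the $H^1$-norm, and take $a(u,v) = \mathcal{A}_{\omega_i^{\ast}}(u,v)$, $b(v,\xi) = \mathcal{B}_{\omega_i^{\ast}}(v,\xi)$. First I would record the routine facts: $a$ is continuous on $V\times V$ (by \cref{ass:1-1}, $|a(u,v)|\le a_{\rm max}\Vert\nabla u\Vert_{L^2(\omega_i^{\ast})}\Vert\nabla v\Vert_{L^2(\omega_i^{\ast})}$), and $b$ is continuous on $V\times Q$ once the boundary integral over $\Gamma_R\cap\partial\omega_i^{\ast}$ is controlled by the multiplicative trace inequality, at the cost of a $k$-dependent constant.

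Next I would verify the two structural hypotheses. The kernel $Z = \{v\in V:\ b(v,\xi)=0\ \forall\xi\in Q\}$ is, by the very definition \cref{eq:2-3}, the generalized harmonic space $H_{\mathcal{B}}(\omega_i^{\ast})$, and on it \cref{eq:2-3-0} gives $\Vert v\Vert_{H^1(\omega_i^{\ast})}^2 \le (1+C^2)\Vert\nabla v\Vert_{L^2(\omega_i^{\ast})}^2 \le a_{\rm min}^{-1}(1+C^2)\,a(v,v)$, so $a$ is coercive on $Z$ with a $k$-independent constant. The inf-sup (LBB) condition $\inf_{\xi\in Q}\sup_{v\in V}|b(v,\xi)|/(\Vert v\Vert_{H^1(\omega_i^{\ast})}\Vert\xi\Vert_{H^1(\omega_i^{\ast})})\ge\beta>0$ is where \cref{ass:5-3} is used: for a fixed $\xi\in Q\subset V$ the map $v\mapsto\mathcal{B}_{\omega_i^{\ast}}(v,\xi)$ belongs to $(H^1_D(\omega_i^{\ast}))^{\prime}$, and $\xi$ itself solves the adjoint problem \cref{eq:5-23} with that right-hand side, so the stability estimate \cref{eq:5-24} yields $\Vert\xi\Vert_{\mathcal{A},\omega_i^{\ast},k}\le C(k)\sup_{v\in V}|\mathcal{B}_{\omega_i^{\ast}}(v,\xi)|/\Vert v\Vert_{H^1(\omega_i^{\ast})}$; combining this with the Poincar\'e inequality on $H^1_{DI}(\omega_i^{\ast})$ (whose elements vanish on $\partial\omega_i^{\ast}\cap\Omega$, a set of positive surface measure) gives the LBB bound with $\beta>0$ depending on $k$ and on $\omega_i^{\ast}$.

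Then the Babuska--Brezzi theorem produces a unique pair $(u,p)\in H^1_D(\omega_i^{\ast})\times H^1_{DI}(\omega_i^{\ast})$ solving \cref{eq:5-29} (with the homogeneous second equation), together with $\Vert u\Vert_{H^1(\omega_i^{\ast})}+\Vert p\Vert_{H^1(\omega_i^{\ast})}\le C\,\Vert\mathcal{F}\Vert_{(H^1_D(\omega_i^{\ast}))^{\prime}}$, where $C$ depends on the coercivity and inf-sup constants, hence on $k$ through \cref{ass:5-3}. Finally I would convert the $H^1$-norms into the weighted ones: $\Vert p\Vert_{\mathcal{A},\omega_i^{\ast},k}\le\max(a_{\rm max}^{1/2},kV_{\rm max})\Vert p\Vert_{H^1(\omega_i^{\ast})}$, and for $u\in H_{\mathcal{B}}(\omega_i^{\ast})$ the norms $\Vert\cdot\Vert_{\mathcal{A},\omega_i^{\ast}}$ and $\Vert\cdot\Vert_{H^1}$ are equivalent by \cref{eq:2-3-0} while $\Vert u\Vert_{\mathcal{A},\omega_i^{\ast},k}^2\le(1+k^2V_{\rm max}^2C^2a_{\rm min}^{-1})\Vert u\Vert_{\mathcal{A},\omega_i^{\ast}}^2$; this yields \cref{eq:5-30}.

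The main obstacle is the LBB condition, and more precisely avoiding the wrong simplification: one is tempted to first solve the coercive reduced problem for $u\in H_{\mathcal{B}}(\omega_i^{\ast})$ and then recover $p$ from the residual $\mathcal{F}-\mathcal{A}_{\omega_i^{\ast}}(u,\cdot)$, but showing that this residual is represented by a $p$ lying in $H^1_{DI}(\omega_i^{\ast})$ (rather than merely in $H^1_D(\omega_i^{\ast})$) would implicitly require the purely Dirichlet local Helmholtz problem on $\omega_i^{\ast}$ to be nonresonant, which is not assumed. Deriving the inf-sup estimate directly from the adjoint stability \cref{ass:5-3}, as above, is precisely what circumvents this.
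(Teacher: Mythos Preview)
Your proposal is correct and follows essentially the same route as the paper: identify the kernel of $b$ as $H_{\mathcal{B}}(\omega_i^{\ast})$ and invoke \cref{eq:2-3-0} for coercivity of $\mathcal{A}_{\omega_i^{\ast}}$ on it, derive the inf-sup condition directly from the adjoint stability in \cref{ass:5-3} (the paper phrases this via the inclusion $H^1_{DI}(\omega_i^{\ast})\subset H^1_D(\omega_i^{\ast})$, which is exactly your observation that $\xi$ tautologically solves \cref{eq:5-23}), and then apply the Babu\v{s}ka--Brezzi theorem. The only cosmetic difference is that the paper works throughout in the weighted norm $\Vert\cdot\Vert_{\mathcal{A},\omega_i^{\ast},k}$ and cites \cite[Theorem~4.2.3]{boffi2013mixed} directly, whereas you set things up in the plain $H^1$-norm and convert at the end; both are fine since the constant in \cref{eq:5-30} is allowed to depend on $k$.
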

\begin{proof}
By \cref{eq:2-3-0}, we see that the sesquilinear form $\mathcal{A}_{\omega_{i}^{\ast}}(\cdot,\cdot)$ is coercive on $H_{\mathcal{B}}(\omega_{i}^{\ast})$. \cm{Moreover, a combination of \cref{ass:5-3} and the fact that $H^{1}_{DI}(\omega^{\ast}_{i})\subset H^{1}_{D}(\omega^{\ast}_{i})$ yields the following inf-sup condition:
\begin{equation*}
\inf_{p\in H^{1}_{DI}(\omega^{\ast}_{i})} \sup_{v\in H^{1}_{D}(\omega^{\ast}_{i})} \frac{|\mathcal{B}_{\omega_{i}^{\ast}}(v,p)|}{\Vert v\Vert_{\mathcal{A},\omega_{i}^{\ast},k} \Vert p\Vert_{\mathcal{A},\omega_{i}^{\ast},k} }\geq \inf_{p\in H^{1}_{D}(\omega^{\ast}_{i})} \sup_{v\in H^{1}_{D}(\omega^{\ast}_{i})} \frac{|\mathcal{B}_{\omega_{i}^{\ast}}(v,p)|}{\Vert v\Vert_{\mathcal{A},\omega_{i}^{\ast},k} \Vert p\Vert_{\mathcal{A},\omega_{i}^{\ast},k} } \geq C.
\end{equation*}
}
Now the result follows immediately from \cite[Theorem 4.2.3]{boffi2013mixed}.
\end{proof}


\begin{lemma}\label{lem:5-6}
Given $\mathcal{F} \in (H^{1}_{D}(\omega^{\ast}_{i}))^{\prime}$, consider the discrete problem of finding $u_{h}\in U_{h,D}(\omega_{i}^{\ast})$ and $p_{h}\in U_{h,DI}(\omega_{i}^{\ast})$ such that
\begin{equation}\label{eq:5-34}
\begin{aligned}
\mathcal{A}_{\omega_{i}^{\ast}}(u_{h},v_{h}) + \mathcal{B}_{\omega_{i}^{\ast}}(v_{h},p_{h}) =&\, \mathcal{F}(v_{h})\quad \forall v_{h}\in U_{h,D}(\omega_{i}^{\ast}),\\
\mathcal{B}_{\omega_{i}^{\ast}}(u_{h},\xi_{h}) =&\;0\qquad\;\,\;\;\, \forall \xi_{h}\in U_{h,DI}(\omega_{i}^{\ast}).
\end{aligned}
\end{equation}
Under \cref{ass:5-3}, there exists $h_{0}>0$, such that for any $0<h<h_{0}$, the problem \cref{eq:5-34} has a unique solution $(u_{h},p_{h})$ with
\begin{equation}\label{eq:5-35}
\begin{array}{lll}
{\displaystyle \qquad \Vert u-u_h\Vert_{\mathcal{A},\omega_{i}^{\ast},k} + \Vert p-p_h\Vert_{\mathcal{A},\omega_{i}^{\ast},k}}\\[3mm]
{\displaystyle \leq C\big(\inf_{v_h\in U_{h,D}(\omega_{i}^{\ast})}\Vert u-v_h\Vert_{\mathcal{A},\omega_{i}^{\ast},k} + \inf_{q_h\in U_{h,DI}(\omega_{i}^{\ast})}\Vert p-q_h\Vert_{\mathcal{A},\omega_{i}^{\ast},k}\big),}
\end{array} 
\end{equation}
where $(u,p)$ denotes the solution of \cref{eq:5-29}.
\end{lemma}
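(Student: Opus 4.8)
The statement is a standard quasi-optimality result for the mixed/saddle-point finite element discretization \cref{eq:5-34} of the continuous problem \cref{eq:5-29}, and the plan is to obtain it from the discrete inf-sup (Babuška--Brezzi) theory by transferring the continuous inf-sup conditions established in \cref{lem:5-5} to the discrete level. First I would record the two structural facts behind \cref{lem:5-5}: (a) $\mathcal{A}_{\omega_i^{\ast}}(\cdot,\cdot)$ is coercive on the kernel $H_{\mathcal{B}}(\omega_i^{\ast}) = \{u : \mathcal{B}_{\omega_i^{\ast}}(u,\xi)=0\ \forall \xi\in H^1_{DI}(\omega_i^{\ast})\}$ by \cref{eq:2-3-0}; and (b) the form $\mathcal{B}_{\omega_i^{\ast}}$ satisfies the inf-sup condition over $H^1_{D}(\omega_i^{\ast})\times H^1_{DI}(\omega_i^{\ast})$ displayed in the proof of \cref{lem:5-5}, with constant controlled by $C(k)$ from \cref{ass:5-3}. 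Then I would verify the corresponding two discrete conditions on $U_{h,D}(\omega_i^{\ast})\times U_{h,DI}(\omega_i^{\ast})$ and invoke the abstract error estimate for mixed problems, e.g. \cite[Theorem 5.2.2]{boffi2013mixed}, to conclude \cref{eq:5-35}; unique solvability of \cref{eq:5-34} then follows since it is a square finite-dimensional system whose solution is bounded by the data.

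The core of the argument — and the main obstacle — is the \emph{discrete inf-sup condition for} $\mathcal{B}_{\omega_i^{\ast}}$ on $U_{h,D}(\omega_i^{\ast})\times U_{h,DI}(\omega_i^{\ast})$, i.e. proving that for $h$ small enough
\[
\inf_{q_h\in U_{h,DI}(\omega_i^{\ast})}\ \sup_{v_h\in U_{h,D}(\omega_i^{\ast})}\ \frac{|\mathcal{B}_{\omega_i^{\ast}}(v_h,q_h)|}{\Vert v_h\Vert_{\mathcal{A},\omega_i^{\ast},k}\,\Vert q_h\Vert_{\mathcal{A},\omega_i^{\ast},k}}\ \geq\ \gamma_h
\]
with $\gamma_h$ bounded below independently of $h$. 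I would derive this from the projection operator $\Pi^i_h$ of \cref{lem:5-4}: given $q_h\in U_{h,DI}(\omega_i^{\ast})\subset H^1_{D}(\omega_i^{\ast})$, pick $v\in H^1_{D}(\omega_i^{\ast})$ nearly realizing the continuous supremum in \cref{lem:5-5} against $q_h$, and test with $v_h=\Pi^i_h v$. Since $\Pi^i_h$ is the $\mathcal{B}_{\omega_i^{\ast}}$-projection, $\mathcal{B}_{\omega_i^{\ast}}(\Pi^i_h v, q_h)=\mathcal{B}_{\omega_i^{\ast}}(v,q_h)$ for every $q_h\in U_{h,D}(\omega_i^{\ast})\supset U_{h,DI}(\omega_i^{\ast})$, and the stability bound $\Vert\Pi^i_h v\Vert_{\mathcal{A},\omega_i^{\ast},k}\le C\Vert v\Vert_{\mathcal{A},\omega_i^{\ast},k}$ from \cref{eq:5-25} then yields the discrete inf-sup constant $\gamma_h\ge C^{-1}\gamma$, uniform in $h$. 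This is exactly the classical Fortin-operator mechanism, with $\Pi^i_h$ playing the role of the Fortin operator. The hypothesis $0<h<h_0$ in the statement is precisely the threshold $h_0(k)$ of \cref{lem:5-4} under which $\Pi^i_h$ is well-defined and stable — here invoking \cref{ass:5-3}, which is assumed.

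The remaining discrete condition, coercivity of $\mathcal{A}_{\omega_i^{\ast}}$ on the discrete kernel
\[
K_h:=\{u_h\in U_{h,D}(\omega_i^{\ast}) : \mathcal{B}_{\omega_i^{\ast}}(u_h,\xi_h)=0\ \forall \xi_h\in U_{h,DI}(\omega_i^{\ast})\}=H_{h,\mathcal{B}}(\omega_i^{\ast}),
\]
is immediate: by \cref{eq:5-2-0} the Poincaré-type estimate $\Vert u_h\Vert_{L^2(\omega_i^{\ast})}\le C\Vert\nabla u_h\Vert_{L^2(\omega_i^{\ast})}$ holds on $H_{h,\mathcal{B}}(\omega_i^{\ast})$ with $C$ independent of $h$, so $\mathcal{A}_{\omega_i^{\ast}}(u_h,u_h)=\Vert\nabla u_h\Vert^2_{\mathcal{A}}\ge c\Vert u_h\Vert^2_{\mathcal{A},\omega_i^{\ast},k}$ uniformly. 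With both discrete conditions in hand, the abstract mixed theory gives well-posedness of \cref{eq:5-34} and the quasi-optimal estimate \cref{eq:5-35}, and the proof is complete. I would also remark that the boundedness and $L^2$-ellipticity bounds for $\mathcal{A}_{\omega_i^{\ast}}$ and $\mathcal{B}_{\omega_i^{\ast}}$ needed in the abstract framework are all contained in \cref{ass:1-1} and \cref{eq:1-5}, with $k$-uniform constants.
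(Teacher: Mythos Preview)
Your proposal is correct and follows essentially the same approach as the paper: verify coercivity of $\mathcal{A}_{\omega_i^{\ast}}$ on the discrete kernel $H_{h,\mathcal{B}}(\omega_i^{\ast})$ via \cref{eq:5-2-0}, establish the discrete inf-sup condition for $\mathcal{B}_{\omega_i^{\ast}}$ using \cref{lem:5-4}, and conclude by \cite[Theorem~5.2.2]{boffi2013mixed}. The only cosmetic difference is in the inf-sup step: the paper, for given $p_h\in U_{h,DI}(\omega_i^{\ast})$, directly solves the discrete auxiliary problem $\mathcal{B}_{\omega_i^{\ast}}(w_h,v_h)=\mathcal{A}_{\omega_i^{\ast},k}(p_h,v_h)$ (whose well-posedness and stability are again supplied by \cref{lem:5-4}) and tests with $v_h=w_h$, whereas you realize the continuous supremum and project via $\Pi^i_h$ as a Fortin operator---these are two standard, equivalent ways of transferring the continuous inf-sup to the discrete spaces.
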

\begin{proof}
The coerciveness of $\mathcal{A}_{\omega_{i}^{\ast}}(\cdot,\cdot)$ on $H_{h,\mathcal{B}}(\omega_{i}^{\ast})$ is implied by \cref{eq:5-2-0} and thus it suffices to prove the discrete inf-sup condition. For any $p_{h}\in U_{h,DI}(\omega_{i}^{\ast})$, we consider the discrete problem of finding $w_{h}\in U_{h,D}(\omega_{i}^{\ast})$ such that
\begin{equation}\label{eq:5-36}
\mathcal{B}_{\omega_{i}^{\ast}}(w_{h}, v_{h}) = \mathcal{A}_{\omega_{i}^{\ast},k}(p_{h},v_{h})\quad \forall v_{h}\in U_{h,D}(\omega_{i}^{\ast}),
\end{equation}
\cm{which is uniquely solvable for $h$ sufficiently small due to  \cref{lem:5-4} and}
\begin{equation}\label{eq:5-39}
\cm{\Vert w_{h} \Vert_{\mathcal{A},\omega_{i}^{\ast},k}\leq C\Vert p_{h}\Vert_{\mathcal{A},\omega_{i}^{\ast},k} \, .}
\end{equation}
Combining \cref{eq:5-36,eq:5-39} gives the discrete inf-sup condition, and the result is an immediate consequence of \cite[Theorem 5.2.2]{boffi2013mixed}.
\end{proof}

\section{Convergence analysis of the discrete MS-GFEM}\label{sec-5}
In this section, we first prove error estimates for the discrete MS-GFEM and then show that the eigenvalues of the discrete eigenproblems converge towards those of the continuous eigenproblems as $h\rightarrow 0$.
\subsection{Local and global error estimates}
The key to deriving a nearly exponential convergence rate for the local approximations of the discrete MS-GFEM is a discrete Caccioppoli inequality which is proved in detail below.
\begin{lemma}[Discrete Caccioppoli inequality]\label{lem:6-1}
Let $\omega\subset\omega^{\ast}$ be subdomains of $\Omega$ with $\delta:={\rm dist}\,\big(\omega, \, \partial \omega^{\ast}\setminus\partial \Omega\big)>0$. In addition, let $\max_{K\cap \omega^{\ast}\neq \emptyset}h_{K}\leq \min\{\frac{1}{2}\delta, \,k^{-1}\}$. Then, for each $u_{h}\in H_{h,\mathcal{B}}(\omega^{\ast})$,
\begin{equation}\label{eq:6-1}
\Vert u_{h}\Vert_{\mathcal{A},\omega}\leq C\delta^{-1}\Vert u_{h} \Vert_{L^{2}(\omega^{\ast})} + \sqrt{2}kV_{\rm max}\Vert u_{h} \Vert_{L^{2}(\omega^{\ast})},
\end{equation}
where $C$ depends only on $d$, $\cm{\Vert \beta\Vert_{L^{\infty}(\Gamma_R)}}$, the (spectral) bounds of the coefficients $A$ and $V$, and the shape regularity of the mesh.
\end{lemma}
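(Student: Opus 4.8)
The plan is to mimic the proof of the continuous Caccioppoli inequality (Lemma~\ref{thm:2-1}, i.e.\ the identity \cref{eq:2-5} and its consequence \cref{eq:2-6}), but to account for the fact that a discrete generalized harmonic function $u_h$ is only \emph{discretely} harmonic: the test-function identity $\mathcal{B}_{\omega^{\ast}}(u_h,v_h)=0$ holds only for $v_h\in U_{h,DI}(\omega^{\ast})$, not for all of $H^1_{DI}(\omega^{\ast})$. First I would pick a smooth cut-off function $\eta\in C^{\infty}(\Omega)$ with $\eta\equiv 1$ on $\omega$, $\eta\equiv 0$ on $\partial\omega^{\ast}\setminus\partial\Omega$ (and away from $\omega^{\ast}$), and $|\eta|_{W^{j,\infty}}\le C\delta^{-j}$ for $j\le 2$; this is exactly the regularity hypothesis needed to invoke the superapproximation estimates of \cref{lem:5-2}. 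The natural discrete test function is then $v_h := I_h(\eta^2 u_h)\in U_{h,DI}(\omega^{\ast})$ (it vanishes where $\eta$ does). Using $\mathcal{B}_{\omega^{\ast}}(u_h, I_h(\eta^2 u_h))=0$, I would write
\begin{equation*}
\mathcal{B}_{\omega^{\ast}}(u_h,\eta^2 u_h) = \mathcal{B}_{\omega^{\ast}}\big(u_h,\, \eta^2 u_h - I_h(\eta^2 u_h)\big),
\end{equation*}
so that the left-hand side — which, after the standard Leibniz manipulation $A\nabla u_h\cdot\nabla(\eta^2\overline{u_h}) = A\nabla(\eta u_h)\cdot\nabla(\eta\overline{u_h}) - (A\nabla\eta\cdot\nabla\eta)|u_h|^2$, gives $\|\eta u_h\|_{\mathcal{A},\omega^{\ast}}^2$ minus lower-order $L^2$ terms (the $k^2V^2$ term and the impedance boundary term contribute only imaginary parts or terms controlled by $\|u_h\|_{L^2}$ and boundary terms) — equals a consistency-error term on the right that would vanish in the continuous case.

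The core of the argument is therefore to bound the right-hand side $\big|\mathcal{B}_{\omega^{\ast}}\big(u_h,\, \eta^2 u_h - I_h(\eta^2 u_h)\big)\big|$. I would split it into the gradient term $\int A\nabla u_h\cdot\nabla\overline{(\eta^2 u_h - I_h(\eta^2 u_h))}$, the zeroth-order term $k^2\int V^2 u_h\,\overline{(\eta^2 u_h - I_h(\eta^2 u_h))}$, and the impedance boundary term on $\Gamma_R\cap\partial\omega^{\ast}$. On each element $K$ with $h_K\le\tfrac12\delta$, \cref{eq:5-13} bounds $\|\eta^2 u_h - I_h(\eta^2 u_h)\|_{H^1(K)}$ by $C(\tfrac{h_K}{\delta}\|\nabla(\eta u_h)\|_{L^2(K)} + \tfrac{h_K}{\delta^2}\|u_h\|_{L^2(K)})$ and \cref{eq:5-14} gives the analogous $L^2$ bound with $h_K^2$ in place of $h_K$. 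Summing over $K$, using Cauchy--Schwarz, the bounds on $A$ and $V$, the hypothesis $h_K\le k^{-1}$ (so that $k^2 h_K^2\le 1$ and the $k$-weighted $L^2$ terms are absorbed), and an inverse-type estimate, the gradient piece produces a term of the form $C\tfrac{h_K}{\delta}\|\nabla u_h\|_{L^2(\omega^{\ast})}\|\nabla(\eta u_h)\|_{L^2(\omega^{\ast})}$ plus $\delta^{-2}$-weighted $L^2$ terms; the factor $h_K/\delta\le\tfrac12$ is what lets these be absorbed into $\|\eta u_h\|_{\mathcal{A},\omega^{\ast}}^2$ on the left after Young's inequality. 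For the boundary term I would use the multiplicative trace inequality \cref{eq:5-15} element-by-element together with \cref{eq:5-14}; again the powers of $h_K$ and the condition $h_K\le\min\{\tfrac12\delta,k^{-1}\}$ make these lower-order. Collecting everything yields $\|\eta u_h\|_{\mathcal{A},\omega^{\ast}}^2 \le C\delta^{-2}\|u_h\|_{L^2(\omega^{\ast})}^2 + 2k^2V_{\rm max}^2\|u_h\|_{L^2(\omega^{\ast})}^2$ (the constant $\sqrt2$ in \cref{eq:6-1} coming from the $k^2V^2\eta^2$ term in the analogue of identity \cref{eq:2-5}), and since $\|u_h\|_{\mathcal{A},\omega}\le\|\eta u_h\|_{\mathcal{A},\omega^{\ast}}$ because $\eta\equiv1$ on $\omega$, taking square roots and using $\sqrt{a+b}\le\sqrt a+\sqrt b$ gives \cref{eq:6-1}.

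The main obstacle, and the step requiring genuine care, is the bookkeeping of the consistency term: one must verify that \emph{every} piece of $\mathcal{B}_{\omega^{\ast}}\big(u_h, \eta^2 u_h - I_h(\eta^2 u_h)\big)$ — in particular the gradient term, which is the only one not already of lower order — carries at least one factor of $h_K/\delta$ or $kh_K$, so that under $h_K\le\min\{\tfrac12\delta,k^{-1}\}$ it can be hidden on the left-hand side without generating a spurious $k$- or $\delta$-dependence worse than stated. This is where \cref{eq:5-13} (with its two terms $\tfrac{h_K}{\delta}$ and $\tfrac{h_K}{\delta^2}$) is used in an essential way, and where one must be slightly careful to handle the $\|\nabla(\eta u_h)\|$-type terms on the right by moving them to the left rather than naively bounding them. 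The impedance boundary contributions require the trace inequality but are otherwise routine. Everything else — the Leibniz identity, absorbing the imaginary impedance term, extracting the $k^2V^2$ factor — parallels the continuous proof of \cref{thm:2-1} verbatim.
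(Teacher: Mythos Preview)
Your proposal is correct and follows essentially the same approach as the paper's proof: the same cut-off $\eta$, the same Leibniz identity \cref{eq:2-9} to write $\|\eta u_h\|_{\mathcal{A},\omega^{\ast}}^2$ in terms of ${\rm Re}\,\mathcal{B}_{\omega^{\ast}}(u_h,\eta^2 u_h)$ plus lower-order terms, the same replacement $\mathcal{B}_{\omega^{\ast}}(u_h,\eta^2 u_h)=\mathcal{B}_{\omega^{\ast}}(u_h,\eta^2 u_h-I_h(\eta^2 u_h))$, and the same term-by-term estimation of the gradient, zeroth-order, and boundary contributions via \cref{lem:5-2}, an inverse estimate, and the multiplicative trace inequality \cref{eq:5-15}, each yielding $C\delta^{-2}\|u_h\|_{L^2(\omega^{\ast})}^2+\tfrac16\|\eta u_h\|_{\mathcal{A},\omega^{\ast}}^2$ and then being absorbed.
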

\begin{proof}
Let $\eta\in C^{\infty}(\omega^{\ast})$ be a cut-off function satisfying $0\leq\eta\leq 1$ and 
\begin{equation}\label{eq:6-2}
\eta =1\;\;\; {\rm in} \;\;\,\omega,\quad \eta = 0\;\; \;{\rm on}\;\;\,\partial \omega^{\ast}\setminus\partial \Omega,\quad |\eta|_{W^{j,\infty}(\omega^{\ast})}\leq C\delta^{-j},\;\;\;j=1,\,2.
\end{equation}
Using \cref{eq:2-9} with $u=v=u_{h}$ gives that
\begin{equation}\label{eq:6-3}
\begin{array}{lll}
{\displaystyle \Vert \eta u_{h}\Vert^{2}_{\mathcal{A},\omega^{\ast}}=\int_{\omega^{\ast}}(A\nabla \eta \cdot \nabla \eta) |u_{h}|^{2}\,d{\bm x} + {\rm Re}\big[\mathcal{A}_{\omega^{\ast}}(u_{h}, \eta^{2}u_{h})\big] }\\[3mm]
{\displaystyle \qquad  = \int_{\omega^{\ast}}(A\nabla \eta \cdot \nabla \eta + k^{2}V^{2}\eta^{2}) |u_{h}|^{2}\,d{\bm x} + {\rm Re}\big[\mathcal{B}_{\omega^{\ast}}(u_{h}, \eta^{2}u_{h})\big].}
\end{array}
\end{equation}
Since $u_{h}\in H_{h,\mathcal{B}}(\omega^{\ast})$, we see that $\mathcal{B}_{\omega^{\ast}}(u_{h}, I_{h}(\eta^{2}u_{h})) = 0$ and thus
\begin{equation}\label{eq:6-4}
\begin{array}{lll}
{\displaystyle  \mathcal{B}_{\omega^{\ast}}(u_{h}, \eta^{2}u_{h}) = \mathcal{B}_{\omega^{\ast}}(u_{h}, \eta^{2}u_{h}-I_{h}(\eta^{2}u_{h})) }\\[3mm]
{\displaystyle = \,\mathcal{A}_{\omega^{\ast}}(u_{h}, \eta^{2}u_{h}-I_{h}(\eta^{2}u_{h})) + k^{2}\int_{\omega^{\ast}}V^{2}u_{h}\big(\eta^{2}u_{h}-I_{h}(\eta^{2}u_{h})\big)\,d{\bm x} }\\[3mm]
{\displaystyle \quad -\,{\rm i}k\int_{\partial \omega^{\ast}\cap \Gamma_{R}}\beta u_{h}\big(\eta^{2}u_{h}-I_{h}(\eta^{2}u_{h})\big)\,d{\bm s}.}
\end{array}
\end{equation}
In what follows, we bound the right-hand side of \cref{eq:6-4} term by term. Using \cref{eq:5-13} and an inverse estimate (local to each $K$), we obtain
\begin{equation}\label{eq:6-5}
\begin{array}{lll}
{\displaystyle \mathcal{A}_{\omega^{\ast}}(u_{h}, \eta^{2}u_{h}-I_{h}(\eta^{2}u_{h}))}\\[2mm]
{\displaystyle \leq C\sum_{K\cap \omega^{\ast} \neq \emptyset}h_{K}\Vert \nabla u_{h}\Vert_{L^{2}(K)} \big(\delta^{-1} \Vert\nabla (\eta u_{h})\Vert_{L^{2}(K)} + \delta^{-2} \Vert u_{h}\Vert_{L^{2}(K)}\big)}\\[4mm]
{\displaystyle \leq C\delta^{-2}\sum_{K\cap \omega^{\ast} \neq \emptyset}  \Vert u_{h}\Vert^{2}_{L^{2}(K)} + \frac{a_{\rm min}}{6} \sum_{K\cap \omega^{\ast} \neq \emptyset} \Vert\nabla (\eta u_{h})\Vert^{2}_{L^{2}(K)}}\\[3mm]
{\displaystyle \leq C\delta^{-2}\Vert u_{h}\Vert^{2}_{L^{2}(\omega^{\ast})} + \frac{1}{6}\Vert \eta u_{h}\Vert^{2}_{\mathcal{A},\omega^{\ast}}.}
\end{array}
\end{equation}
Similarly, we can use \cref{eq:5-14} and the assumption that $kh_{K}\leq 1$ to get
\begin{equation}\label{eq:6-6}
\begin{array}{lll}
{\displaystyle k^{2}\int_{\omega^{\ast}}V^{2}u_{h}\big(\eta^{2}u_{h}-I_{h}(\eta^{2}u_{h})\big)\,d{\bm x}}\\[3mm]
{\displaystyle \leq Ck^{2}\sum_{K\cap \omega^{\ast} \neq \emptyset}h^{2}_{K}\Vert u_{h}\Vert_{L^{2}(K)} \big(\delta^{-1} \Vert\nabla (\eta u_{h})\Vert_{L^{2}(K)} + \delta^{-2} \Vert u_{h}\Vert_{L^{2}(K)}\big)}\\[3mm]
{\displaystyle \leq C\delta^{-2}\Vert u_{h}\Vert^{2}_{L^{2}(\omega^{\ast})} + \frac{1}{6}\Vert \eta u_{h}\Vert^{2}_{\mathcal{A},\omega^{\ast}}.}
\end{array}
\end{equation}
It remains to estimate the last term. Observe that
\begin{equation}\label{eq:6-7}
\begin{array}{lll}
{\displaystyle \Big|{\rm i}k\int_{\partial \omega^{\ast}\cap \Gamma_{R}}\beta u_{h}\big(\eta^{2}u_{h}-I_{h}(\eta^{2}u_{h})\big)\,d{\bm s}\Big| }\\[3mm]
{\displaystyle \; \leq k \,\Vert \beta\Vert_{L^{\infty}(\Gamma_R)} \sum_{K\cap \omega^{\ast} \neq \emptyset}\int_{\partial K} \big|u_{h}\big(\eta^{2}u_{h}-I_{h}(\eta^{2}u_{h}\big)\big|\,d{\bm s}.}
\end{array}
\end{equation}
Using the multiplicative trace inequality \cref{eq:5-15} yields that
\begin{equation}\label{eq:6-8}
\begin{array}{lll}
{\displaystyle \int_{\partial K} \big|u_{h}\big(\eta^{2}u_{h}-I_{h}(\eta^{2}u_{h})\big)\big|\,d{\bm s} \leq \Vert u_{h}\Vert_{L^{2}(\partial K)} \,\Vert \eta^{2}u_{h}-I_{h}(\eta^{2}u_{h})\Vert_{L^{2}(\partial K)}}\\[3mm]
{\displaystyle \quad \leq C\big( h_{K}^{-1/2}\Vert u_{h}\Vert_{L^{2}(K)}+\Vert u_{h}\Vert^{1/2}_{L^{2}(K)}\Vert \nabla u_{h}\Vert^{1/2}_{L^{2}(K)}\big)\big(h^{-1/2}_{K}\Vert \eta^{2}u_{h}-I_{h}(\eta^{2}u_{h})\Vert_{L^{2}(K)} }\\[2mm]
{\displaystyle \qquad +\, \Vert \eta^{2}u_{h}-I_{h}(\eta^{2}u_{h})\Vert^{1/2}_{L^{2}(K)}\,\Vert \nabla(\eta^{2}u_{h}-I_{h}(\eta^{2}u_{h}))\Vert^{1/2}_{L^{2}(K)} \big).}
\end{array}
\end{equation}
Applying \cref{lem:5-2} to \cref{eq:6-8} and using a similar argument as in \cref{eq:6-5}, it can be proved that
\begin{equation}\label{eq:6-9}
\Vert \beta\Vert_{L^{\infty}(\Gamma_R)}\int_{\partial K} \big|u_{h}\big(\eta^{2}u_{h}-I_{h}(\eta^{2}u_{h})\big)\big|\,d{\bm s} \leq h_{K}\big(C\delta^{-2}\Vert u_{h}\Vert^{2}_{L^{2}(K)} + \frac{1}{6}\Vert \eta u_{h}\Vert^{2}_{\mathcal{A},K}\big).
\end{equation}
Inserting \cref{eq:6-9} into \cref{eq:6-7} and noting that $kh_{K}\leq 1$, we get
\begin{equation}\label{eq:6-10}
\Big|{\rm i}k\int_{\partial \omega^{\ast}\cap \Gamma_{R}}Vu_{h}\big(\eta^{2}u_{h}-I_{h}(\eta^{2}u_{h})\big)\,d{\bm s}\Big|\leq C\delta^{-2}\Vert u_{h}\Vert^{2}_{L^{2}(\omega^{\ast})} + \frac{1}{6}\Vert \eta u_{h}\Vert^{2}_{\mathcal{A},\omega^{\ast}}.
\end{equation}
Collecting the estimates \cref{eq:6-5,eq:6-6,eq:6-10} and recalling \cref{eq:6-4}, we arrive at
\begin{equation}\label{eq:6-11}
\big| \mathcal{B}_{\omega^{\ast}}(u_{h}, \eta^{2}u_{h}) \big|\leq  C\delta^{-2}\Vert u_{h}\Vert^{2}_{L^{2}(\omega^{\ast})} + \frac{1}{2}\Vert \eta u_{h}\Vert^{2}_{\mathcal{A},\omega^{\ast}},
\end{equation}
which, combining with \cref{eq:6-2,eq:6-3}, gives \cref{eq:6-1}.
\end{proof}
\begin{corollary}\label{cor:6-1}
Let $\omega$ and $\omega^{\ast}$ satisfy the same assumptions as in \cref{lem:6-1} and let $h\leq \min\{\frac{1}{2}\delta, k^{-1}\}$. Assume that $\eta \in W^{1,\infty}(\omega^{\ast})$ satisfying $\Vert\eta\Vert_{L^{\infty}(\omega^{\ast})}\leq 1$ and $supp\,(\eta)\subset \overline{\omega}$. Then, for each $u_{h}\in H_{h,\mathcal{B}}(\omega^{\ast})$,
\begin{equation}\label{eq:6-12}
\Vert I_{h}(\eta u_{h})\Vert_{\mathcal{A},\omega^{\ast},k}\leq  C\big(\delta^{-1} + kV_{\rm max} + \Vert \nabla \eta\Vert_{L^{\infty}(\omega^{\ast})} \big)\Vert u_{h} \Vert_{L^{2}(\omega^{\ast})},
\end{equation}
where $C$ depends on the bounds of the coefficients and the shape regularity of the mesh.
\end{corollary}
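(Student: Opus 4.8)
The plan is to split, by the triangle inequality,
\begin{equation*}
\Vert I_{h}(\eta u_{h})\Vert_{\mathcal{A},\omega^{\ast},k}\le \Vert \eta u_{h}\Vert_{\mathcal{A},\omega^{\ast},k}+\Vert \eta u_{h}-I_{h}(\eta u_{h})\Vert_{\mathcal{A},\omega^{\ast},k},
\end{equation*}
and to estimate the two terms separately; in both of them the discrete Caccioppoli inequality \cref{eq:6-1} of \cref{lem:6-1} is used to trade the energy norm of $u_{h}$ over $\omega$ for the $L^{2}$-norm of $u_{h}$ over $\omega^{\ast}$. Note that since ${\rm supp}\,(\eta)\subset\overline{\omega}$ with ${\rm dist}(\overline\omega,\partial\omega^{\ast}\setminus\partial\Omega)=\delta>0$ and each relevant $h_{K}\le\tfrac12\delta$, both $\eta u_{h}$ and $I_{h}(\eta u_{h})$ are supported strictly inside $\omega^{\ast}$, so no boundary terms arise.

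For the first term I would use the product rule $\nabla(\eta u_{h})=u_{h}\nabla\eta+\eta\nabla u_{h}$, supported in $\overline\omega$, together with $\Vert\eta\Vert_{L^{\infty}(\omega^{\ast})}\le1$, to obtain $\Vert\nabla(\eta u_{h})\Vert_{L^{2}(\omega^{\ast})}\le\Vert\nabla\eta\Vert_{L^{\infty}(\omega^{\ast})}\Vert u_{h}\Vert_{L^{2}(\omega^{\ast})}+\Vert\nabla u_{h}\Vert_{L^{2}(\omega)}$; then $\Vert\nabla u_{h}\Vert_{L^{2}(\omega)}\le a_{\rm min}^{-1/2}\Vert u_{h}\Vert_{\mathcal{A},\omega}$ and \cref{eq:6-1} yield $\Vert\nabla u_{h}\Vert_{L^{2}(\omega)}\le C(\delta^{-1}+kV_{\rm max})\Vert u_{h}\Vert_{L^{2}(\omega^{\ast})}$. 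Combining this with the trivial bound $k^{2}\Vert V\eta u_{h}\Vert_{L^{2}(\omega^{\ast})}^{2}\le k^{2}V_{\rm max}^{2}\Vert u_{h}\Vert_{L^{2}(\omega^{\ast})}^{2}$ gives $\Vert\eta u_{h}\Vert_{\mathcal{A},\omega^{\ast},k}\le C(\delta^{-1}+kV_{\rm max}+\Vert\nabla\eta\Vert_{L^{\infty}})\Vert u_{h}\Vert_{L^{2}(\omega^{\ast})}$.

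The second term requires an elementwise super-approximation estimate valid for the merely Lipschitz weight $\eta$ (\cref{lem:5-2} as stated assumes $\eta\in C^{\infty}$ and multiplies by $\eta^{2}$, so it does not apply verbatim). On each $K\in\tau_{h}$ with $K\cap\overline\omega\ne\emptyset$ (the remaining elements contribute nothing), I would fix a point $x_{K}\in K$ and, using $\eta(x_{K})u_{h}\in U_{h}$ and the linearity of $I_{h}$, write on $K$
\begin{equation*}
\eta u_{h}-I_{h}(\eta u_{h})=(\eta-\eta(x_{K}))u_{h}-I_{h}\big((\eta-\eta(x_{K}))u_{h}\big).
\end{equation*}
Since $\eta\in W^{1,\infty}$, $|\eta-\eta(x_{K})|\le h_{K}\Vert\nabla\eta\Vert_{L^{\infty}(K)}$ on $K$; combining this with the $L^{\infty}$-stability of Lagrange interpolation ($\Vert I_{h}v\Vert_{L^{\infty}(K)}\le\Vert v\Vert_{L^{\infty}(K)}$), local inverse estimates, and the norm equivalence $\Vert u_{h}\Vert_{L^{\infty}(K)}\le C|K|^{-1/2}\Vert u_{h}\Vert_{L^{2}(K)}$ for the linear (bilinear) polynomial $u_{h}|_{K}$, one obtains
\begin{equation*}
\Vert\eta u_{h}-I_{h}(\eta u_{h})\Vert_{H^{1}(K)}\le C\Vert\nabla\eta\Vert_{L^{\infty}(K)}\big(\Vert u_{h}\Vert_{L^{2}(K)}+h_{K}\Vert\nabla u_{h}\Vert_{L^{2}(K)}\big),
\end{equation*}
and analogously with an extra factor $h_{K}$ for the $L^{2}(K)$-part. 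Squaring, summing over $K$, and using $h_{K}\le\tfrac12\delta$ against the $\delta^{-1}$ terms and $h_{K}\le k^{-1}$ against the factors of $k$ produced by \cref{eq:6-1} applied to $\Vert\nabla u_{h}\Vert_{L^{2}(\omega)}$ (and once more to control the $k^{2}V^{2}$-weighted contribution to $\Vert\cdot\Vert_{\mathcal{A},\omega^{\ast},k}$), I get $\Vert\eta u_{h}-I_{h}(\eta u_{h})\Vert_{\mathcal{A},\omega^{\ast},k}\le C\Vert\nabla\eta\Vert_{L^{\infty}}\Vert u_{h}\Vert_{L^{2}(\omega^{\ast})}$; adding the two estimates yields \cref{eq:6-12}.

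I expect the main obstacle to be this super-approximation step for a Lipschitz-only weight, together with the bookkeeping of the two-sided smallness condition $h_{K}\le\min\{\tfrac12\delta,k^{-1}\}$: the $\tfrac12\delta$ bound has to be paired with the $\delta^{-1}$ terms and the $k^{-1}$ bound with the $kV_{\rm max}$ terms coming out of the discrete Caccioppoli inequality, so that all inverse powers of $h$ cancel and only $\delta^{-1}$, $kV_{\rm max}$ and $\Vert\nabla\eta\Vert_{L^{\infty}}$ survive in the final bound; everything else is routine.
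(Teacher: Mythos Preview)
Your approach is correct and in spirit the same as the paper's, but you are more careful about one point the paper glosses over. The paper's proof is shorter: it simply asserts the $H^{1}$-stability of the Lagrange interpolant,
\[
\Vert I_{h}(\eta u_{h})\Vert_{\mathcal{A},\omega^{\ast},k}\leq C\Vert \eta u_{h}\Vert_{\mathcal{A},\omega^{\ast},k},
\]
and then bounds $\Vert\eta u_{h}\Vert_{\mathcal{A},\omega^{\ast},k}$ exactly as you do in your ``first term'' (product rule plus the discrete Caccioppoli inequality \cref{eq:6-1}). Your triangle-inequality split followed by the explicit elementwise estimate of $\Vert\eta u_{h}-I_{h}(\eta u_{h})\Vert_{\mathcal{A},\omega^{\ast},k}$ via the constant-subtraction trick $\eta\mapsto\eta-\eta(x_{K})$ is precisely what is needed to \emph{justify} that stability claim for the nonstandard class $W^{1,\infty}\cdot U_{h}$ (where neither \cref{lem:5-2} nor standard $H^{2}$-based interpolation error applies verbatim). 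So your longer route buys a genuinely self-contained argument; the paper's route buys brevity by treating that stability as known.
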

\begin{proof}
Using the stability of the interpolation operator and the assumption that $\Vert\eta\Vert_{L^{\infty}(\omega^{\ast})}\leq 1$, we have 
\begin{equation}\label{eq:6-13}
\Vert I_{h}(\eta u_{h})\Vert_{\mathcal{A},\omega^{\ast},k} \leq C \Vert \eta u_{h}\Vert_{\mathcal{A},\omega^{\ast},k}\leq C\big( \Vert \eta u_{h}\Vert_{\mathcal{A},\omega^{\ast}} + kV_{\rm max}\Vert u_{h}\Vert_{L^{2}(\omega^{\ast})}\big).
\end{equation}
Next we use the triangle inequality, the assumptions on $\eta$, and \cref{lem:6-1} to obtain
\begin{equation}\label{eq:6-14}
\begin{array}{lll}
{\displaystyle \Vert \eta u_{h}\Vert_{\mathcal{A},\omega^{\ast}}\leq \Vert A^{1/2} u_{h}\nabla \eta \Vert_{L^{2}(\omega^{\ast})} + \Vert \eta\Vert_{L^{\infty}(\omega^{\ast})}\Vert u_{h}\Vert_{\mathcal{A},\omega} }\\[2mm]
{\displaystyle \qquad \qquad \quad \leq \big(a_{\rm max}^{1/2}\Vert \nabla \eta\Vert_{L^{\infty}(\omega^{\ast})} + C\delta^{-1} + \sqrt{2}kV_{\rm max}\big)\Vert u_{h}\Vert_{L^{2}(\omega^{\ast})}.}
\end{array}
\end{equation}
Inserting \cref{eq:6-14} into \cref{eq:6-13} completes the proof of \cref{eq:6-12}.
\end{proof}

Now we can give upper bounds for the local approximation errors. The result is very similar to that in \cref{thm:3-1} and hence, for the sake of brevity, we only state it for the case where $\omega$ and $\omega^{\ast}$ are general domains. 
\begin{theorem}\label{thm:6-1}
\cm{Let $\delta^{\ast} = {\rm dist}(\omega,\,\partial \omega^{\ast}\setminus \partial \Omega)>0$, and let $\sigma = k\delta^{\ast}V_{\rm max}/(2a^{1/2}_{\rm max})$. There exist $n_{0}>0$ and $b>0$ independent of $h$ and $k$, such that for any $n>n_{0}$, if $h\leq \min\{k^{-1},\,\delta^{\ast}/(4bn^{1/(d+1)})\}$, then
\begin{equation}\label{eq:6-15}
d_{h,n}(\omega, \omega^{\ast})\leq e^{\sqrt{2}\sigma}e^{-bn^{1/(d+1)}}.
\end{equation}}
\vspace{-3ex}
\end{theorem}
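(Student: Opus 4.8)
The plan is to carry the proof of \cref{thm:3-1} over to the discrete setting essentially line by line, replacing the generalized harmonic spaces $H_{\mathcal{B}}(\cdot)$ by the discrete ones $H_{h,\mathcal{B}}(\cdot)$ and the Caccioppoli inequality \cref{eq:2-6} by its discrete counterparts \cref{lem:6-1,cor:6-1}. Concretely, I would construct an explicit $n$-dimensional space $Q_{h}(n) = \{I_{h}(\chi v_{h})\,:\,v_{h}\in\Psi_{h}(n,\omega,\omega^{\ast})\}\subset U_{h,DI}(\omega)$, where $\chi$ is the partition of unity function on $\omega$ and $\Psi_{h}(n,\omega,\omega^{\ast})$ is built by an iteration on a chain of nested \emph{mesh-resolved} subdomains $\omega=\omega^{N+1}\subset\omega^{N}\subset\cdots\subset\omega^{1}=\omega^{\ast}$ with consecutive separations comparable to $\delta^{\ast}/N$, and then bound the quantity in \cref{eq:5-5} by the approximation error of $Q_{h}(n)$.

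The first ingredient is the discrete analogue of \cref{lem:3-2}. Since $H_{h,\mathcal{B}}(\mathcal{D}^{\ast})$ is finite-dimensional, it is a closed subspace of $H^{1}(\mathcal{D}^{\ast})$ on which, by \cref{eq:5-2-0} (with a constant that can be taken uniformly over the subdomains used below), the $H^{1}$-seminorm is an equivalent norm; hence \cref{lem:3-1} applies \emph{verbatim} with $S(\mathcal{D}^{\ast})=H_{h,\mathcal{B}}(\mathcal{D}^{\ast})$ and yields, for $m\ge C_{1}|\mathcal{D}^{\ast}|(\delta/2)^{-d}$, an $m$-dimensional space $\Psi_{h,m}(\mathcal{D}^{\ast})\subset H_{h,\mathcal{B}}(\mathcal{D}^{\ast})$ with $L^{2}(\mathcal{D})$-approximation error $\le C_{2}m^{-1/d}|\mathcal{D}^{\ast}|^{1/d}a_{\rm min}^{-1/2}\Vert u_{h}\Vert_{\mathcal{A},\mathcal{D}^{\ast}}$ for every $u_{h}\in H_{h,\mathcal{B}}(\mathcal{D}^{\ast})$. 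The structural fact that makes the iteration work is that, because the nested domains are unions of mesh elements, the restriction of a discrete generalized harmonic function to a smaller nested subdomain is again discrete generalized harmonic: a test function in $U_{h,DI}$ of the smaller domain extends by zero to a test function in $U_{h,DI}$ of the larger one, and $\mathcal{B}_{\omega}(\cdot,\cdot)$ involves only integrals over $\omega$. Thus $u_{h}-\varphi_{h}\in H_{h,\mathcal{B}}(\mathcal{D}_{\delta/2})$ for an intermediate domain $\mathcal{D}\subset\mathcal{D}_{\delta/2}\subset\mathcal{D}^{\ast}$ with ${\rm dist}(\mathcal{D},\partial\mathcal{D}_{\delta/2}\setminus\partial\Omega)=\delta/2$, and \cref{lem:6-1} (applicable since $h\le\min\{\delta/4,k^{-1}\}$) upgrades the $L^{2}$ bound to $\Vert u_{h}-\varphi_{h}\Vert_{\mathcal{A},\mathcal{D}}\le C_{2}m^{-1/d}|\mathcal{D}^{\ast}|^{1/d}(2C\delta^{-1}+\sqrt{2}kV_{\rm max})a_{\rm min}^{-1/2}\Vert u_{h}\Vert_{\mathcal{A},\mathcal{D}^{\ast}}$; enlarging the constant $C$ of \cref{lem:6-1} if necessary so that $C\ge a_{\rm max}^{1/2}$, the second bracket is $\le 2C\delta^{-1}(1+\sqrt{2}\sigma/N)$ when $\delta=\delta^{\ast}/N$.

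With this in hand I would run the iteration exactly as in the proof of \cref{lem:3-3}: applying the discrete auxiliary result successively on $(\omega^{j},\omega^{j+1})$ for $j=1,\dots,N-1$ produces $v_{u}^{j}\in\Psi_{h,m}(\omega^{j})$ controlling $\Vert u_{h}-\sum_{j\le N-1}v_{u}^{j}\Vert_{\mathcal{A},\omega^{N}}$ by $\xi^{N-1}(1+\sqrt{2}\sigma/N)^{N-1}\prod_{j\le N-1}|\omega^{j}|^{1/d}\,\Vert u_{h}\Vert_{\mathcal{A},\omega^{\ast}}$, with $\xi$ of the same form as \cref{eq:3-7} up to constants; and one more application of \cref{lem:3-1} on $\omega^{N}$ (with a further auxiliary domain $\hat{\omega}$ squeezed between $\omega$ and $\omega^{N}$, since the discrete Caccioppoli, unlike \cref{eq:2-6}, needs two genuinely nested domains) combined with \cref{cor:6-1} for $\eta=\chi$ gives $v_{u}^{N}\in\Psi_{h,m}(\omega^{N})$ with $\Vert I_{h}(\chi(u_{h}-\sum_{j\le N}v_{u}^{j}))\Vert_{\mathcal{A},\omega,k}\le 2e^{\sqrt{2}\sigma}\xi^{N}\prod_{j\le N}|\omega^{j}|^{1/d}\,\Vert u_{h}\Vert_{\mathcal{A},\omega^{\ast}}$, using $(1+\sqrt{2}\sigma/N)^{N}\le e^{\sqrt{2}\sigma}$. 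Since $|\omega^{j}|\le|\omega^{\ast}|$ and $\sum_{j}v_{u}^{j}\in\Psi_{h}(n,\omega,\omega^{\ast})$, this yields $d_{h,n}(\omega,\omega^{\ast})\le 2e^{\sqrt{2}\sigma}(\xi|\omega^{\ast}|^{1/d})^{N}$; optimising $N$ and $m$ as in \cref{eq:3-14-0} so that $(\xi|\omega^{\ast}|^{1/d})^{N}\le e^{-1}e^{-bn^{1/(d+1)}}$ for $n>n_{0}$, with the same $n_{0}$ and $b$ as in \cref{thm:3-1}, gives \cref{eq:6-15}. The mesh condition $h\le\min\{k^{-1},\delta^{\ast}/(4bn^{1/(d+1)})\}$ is precisely what legitimises every invocation of \cref{lem:6-1,cor:6-1}: in each step the relevant oversampling distance is $\ge\delta^{\ast}/(2N)$, and the chosen $N$ is comparable to $bn^{1/(d+1)}$.

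The part I expect to cost the most effort is not the error decay, which is a transcription of \cref{sec-3}, but the discrete bookkeeping: checking that the nested domains can be taken as shape-regular unions of mesh elements with near-equal separations and that the constants in \cref{eq:5-2-0} and in \cref{lem:6-1} are uniform over this family; the inheritance of the discrete generalized harmonic property under restriction via zero-extension of test functions; the need for one extra intermediate domain at the last step compared with the continuous proof (because the discrete Caccioppoli intrinsically requires a positive gap between the two domains, whereas \cref{eq:2-6} acts on a single domain); and keeping the coefficient of $kV_{\rm max}$ in the discrete Caccioppoli estimates equal to $\sqrt{2}$ rather than absorbing it into a generic constant, since this is exactly what propagates into the factor $e^{\sqrt{2}\sigma}$ in \cref{eq:6-15}.
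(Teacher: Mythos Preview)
Your proposal is correct and follows essentially the same approach as the paper, which explicitly states that the proof ``follows the same lines as that of \cref{thm:3-1}, by first combining the discrete Caccioppoli inequality and \cref{lem:3-1} to establish a similar approximation result as \cref{lem:3-2}, and then applying this approximation result recursively on a family of nested domains'' and omits the details. Your elaboration of these omitted details---the applicability of \cref{lem:3-1} to $H_{h,\mathcal{B}}(\cdot)$ via \cref{eq:5-2-0}, the inheritance of the discrete harmonic property under restriction, the tracking of the $\sqrt{2}$ factor from \cref{eq:6-1} into $e^{\sqrt{2}\sigma}$, and the mesh condition ensuring $h\le\min\{k^{-1},\delta^{\ast}/(2N)\}$ at each step---is accurate and matches what the paper intends.
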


The proof of \cref{thm:6-1} follows the same lines as that of \cref{thm:3-1}, by first combining the discrete Caccioppoli inequality and \cref{lem:3-1} to establish a similar approximation result as \cref{lem:3-2}, and then applying this approximation result recursively on a family of nested domains. The details are omitted here.

Before giving the global error estimates for the method, let us recall the stability constants $\widetilde{C}_{\rm stab}(k)$ and $\widetilde{C}^{i}_{{\rm stab}}(k)$ defined in \cref{ass:5-1}. As in \cref{eq:4-1}, we define
\begin{equation}\label{eq:6-16}
\begin{array}{lll}
{\displaystyle {d}_{h,\mathtt{max}} = \max_{i=1,\cdots,M}d_{h,n_i}(\omega_i,\omega_{i}^{\ast}),\quad \widetilde{C}_{\mathtt{max}}(k) = \max_{i=1,\cdots,M} \widetilde{C}^{i}_{{\rm stab}}(k).}
\end{array}
\end{equation}
The global approximation error of the discrete MS-GFEM is given in the following lemma, which can be proved using exactly the same technique as in \cref{lem:4-1}.
\begin{lemma}\label{lem:6-3}
Let $u^{\mathdutchcal{e}}_{h}$ be the solution of the discrete problem \cref{eq:5-1} and let $u_{h}^{p}$ and $S_{h}(\Omega)$ be the global particular function and the trial space of the discrete MS-GFEM. Then, there exists a $\varphi_{h}\in u^{p}_{h} + S_{h}(\Omega)$ such that
\begin{equation}\label{eq:6-17}
\Vert u_{h}^{\mathdutchcal{e}}-\varphi_{h} \Vert_{\mathcal{A},k}\leq \sqrt{2\zeta \zeta^{\ast}} \,{d}_{h,\mathtt{max}} \big(\widetilde{C}_{\rm stab}(k) + \sqrt{2} \widetilde{C}_{\mathtt{max}}(k)\big) (\Vert f\Vert_{L^{2}(\Omega)} + \Vert g\Vert_{L^{2}(\Gamma_{R})}).
\end{equation}
\end{lemma}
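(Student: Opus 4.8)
The plan is to mirror the proof of \cref{lem:4-1} exactly, since the excerpt explicitly states that \cref{lem:6-3} "can be proved using exactly the same technique". First I would invoke \cref{thm:5-1}: on each subdomain $\omega_i$, \cref{eq:5-9} furnishes a local approximant $\varphi_{h,i}\in u^p_{h,i}+S_{h,n_i}(\omega_i)$ with
\begin{equation*}
\big\Vert I_h\big(\chi_i(u^{\mathdutchcal{e}}_h-\varphi_{h,i})\big)\big\Vert_{\mathcal{A},\omega_i,k}\leq d_{h,n_i}(\omega_i,\omega_i^{\ast})\,\Vert u^{\mathdutchcal{e}}_h-\psi_{h,i}\Vert_{\mathcal{A},\omega_i^{\ast}}.
\end{equation*}
Then I would bound $\Vert u^{\mathdutchcal{e}}_h-\psi_{h,i}\Vert_{\mathcal{A},\omega_i^{\ast}}\leq \Vert u^{\mathdutchcal{e}}_h\Vert_{\mathcal{A},\omega_i^{\ast}}+\Vert\psi_{h,i}\Vert_{\mathcal{A},\omega_i^{\ast}}$, applying the discrete local stability estimate \cref{eq:5-4} from \cref{ass:5-1}(ii) to control $\Vert\psi_{h,i}\Vert_{\mathcal{A},\omega_i^{\ast}}\leq\Vert\psi_{h,i}\Vert_{\mathcal{A},\omega_i^{\ast},k}\leq\widetilde{C}^i_{\rm stab}(k)(\Vert f\Vert_{L^2(\omega_i^{\ast})}+\Vert g\Vert_{L^2(\partial\omega_i^{\ast}\cap\Gamma_R)})$, in direct analogy with \cref{eq:4-3}.

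Next I would assemble the global approximant $\varphi_h=\sum_{i=1}^M I_h(\chi_i\varphi_{h,i})\in u^p_h+S_h(\Omega)$ and estimate the global energy error. Using the pointwise overlap condition \cref{eq:2-0} for $\{\omega_i\}$, one writes $u^{\mathdutchcal{e}}_h-\varphi_h=\sum_{i=1}^M I_h\big(\chi_i(u^{\mathdutchcal{e}}_h-\varphi_{h,i})\big)$ (here I would use $\sum_i I_h(\chi_i v)=I_h(v)=v$ for $v\in U_h$ together with $\sum_i\chi_i=1$) and obtains
\begin{equation*}
\Vert u^{\mathdutchcal{e}}_h-\varphi_h\Vert^2_{\mathcal{A},k}\leq \zeta\sum_{i=1}^M\big\Vert I_h\big(\chi_i(u^{\mathdutchcal{e}}_h-\varphi_{h,i})\big)\big\Vert^2_{\mathcal{A},\omega_i,k},
\end{equation*}
exactly as in \cref{eq:4-4}. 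Substituting the per-subdomain bound, using $d_{h,n_i}\leq d_{h,\mathtt{max}}$ and $\widetilde{C}^i_{\rm stab}(k)\leq\widetilde{C}_{\mathtt{max}}(k)$, and then summing over $i$ while invoking the oversampling overlap condition \cref{eq:4-1-0} to bound $\sum_i\Vert u^{\mathdutchcal{e}}_h\Vert^2_{\mathcal{A},\omega_i^{\ast}}\leq\zeta^{\ast}\Vert u^{\mathdutchcal{e}}_h\Vert^2_{\mathcal{A},\Omega}$ and $\sum_i(\Vert f\Vert^2_{L^2(\omega_i^{\ast})}+\Vert g\Vert^2_{L^2(\partial\omega_i^{\ast}\cap\Gamma_R)})\leq\zeta^{\ast}(\Vert f\Vert^2_{L^2(\Omega)}+\Vert g\Vert^2_{L^2(\Gamma_R)})$, together with the discrete global stability estimate \cref{eq:5-2-2} to replace $\Vert u^{\mathdutchcal{e}}_h\Vert_{\mathcal{A},k}$ by $\widetilde{C}_{\rm stab}(k)(\Vert f\Vert_{L^2(\Omega)}+\Vert g\Vert_{L^2(\Gamma_R)})$, and a $\sqrt{a^2+b^2}\leq a+b$-type inequality to combine the two stability contributions, yields \cref{eq:6-17} with the constant $\sqrt{2\zeta\zeta^{\ast}}$ and the factor $\widetilde{C}_{\rm stab}(k)+\sqrt{2}\widetilde{C}_{\mathtt{max}}(k)$.

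I do not expect any serious obstacle here; the only points requiring a little care are bookkeeping ones. First, unlike the continuous case, the partition-of-unity gluing goes through the interpolation operator $I_h$, so one must use that $I_h$ acts as the identity on $U_h$ to recover $\sum_i I_h(\chi_i v)=v$ for $v\in U_h$ — this is legitimate since $u^{\mathdutchcal{e}}_h\in U_{h,D}$ and each $\varphi_{h,i}$ restricted appropriately lies in the finite element space. Second, one should note $\Vert\cdot\Vert_{\mathcal{A},\omega_i^{\ast}}\leq\Vert\cdot\Vert_{\mathcal{A},\omega_i^{\ast},k}$ pointwise to pass from the $k$-weighted stability bounds in \cref{ass:5-1} to the unweighted norm appearing in the $n$-width estimate. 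With these observations the argument is a verbatim transcription of the proof of \cref{lem:4-1}, replacing continuous objects by their discrete counterparts and \cref{ass:1-2}, \cref{ass:2-1} by \cref{ass:5-1}.
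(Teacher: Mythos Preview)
Your proposal is correct and follows exactly the approach indicated by the paper, which explicitly defers to the proof of \cref{lem:4-1} with the continuous objects replaced by their discrete analogues. The bookkeeping points you flag---that $I_h$ acts as the identity on $U_h$ so that $\sum_i I_h(\chi_i v)=v$ for $v\in U_{h,D}$, and that $\Vert\cdot\Vert_{\mathcal{A},\omega_i^{\ast}}\leq\Vert\cdot\Vert_{\mathcal{A},\omega_i^{\ast},k}$---are precisely the small adaptations needed.
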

Arguing as in the proof of \cref{thm:4-1,lem:4-3}, we can get a similar quasi optimality for the discrete method as \cref{cor:4-1}. Before stating the result, we recall the constants $\Xi$ defined in \cref{eq:4-5-16} and $H^{\ast}_{\mathtt{max}}$ defined in \cref{eq:4-1}.
\begin{theorem}\label{thm:6-2}
Let $u_{h}^{\mathdutchcal{e}}$ be the solution of the discrete problem \cref{eq:5-1} and $u_{h}^{G}$ be the discrete MS-GFEM approximation, and \cm{let ${\rm diam}(\omega_{i}^{\ast}) \leq 2\,{\rm diam}(\omega_{i})$ for each $i=1,\ldots,M$}. Suppose that
\begin{equation}\label{eq:6-18}
d_{h,\mathtt{max}} \leq \big(8\sqrt{2}kC_{\rm stab}(k)\,\Xi\big)^{-1},\quad  \cm{H^{\ast}_{\mathtt{max}} \leq \big(36kC_{\chi}C_{I}C^{2}_{P}a^{1/2}_{\rm max} a^{-1}_{\rm min}\,\Xi\big)^{-1}},
\end{equation}
where $C_{I}>0$ depends on the stability of the interpolation operator $I_{h}$. Then,
\begin{equation}\label{eq:6-19}
\Vert u_{h}^{\mathdutchcal{e}}-u_{h}^{G}\Vert_{\mathcal{A},k} \leq 2C_{\mathcal{B}}\inf_{\varphi_{h}\in u_{h}^{p} + S_{h}(\Omega)}\Vert u_{h}^{\mathdutchcal{e}}-\varphi_{h} \Vert_{\mathcal{A},k}.
\end{equation}
\end{theorem}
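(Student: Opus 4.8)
The strategy is to transplant the continuous argument of \cref{thm:4-1,lem:4-3,cor:4-1} to the finite element spaces. First I would set up the discrete Schatz argument. Introduce the discrete solution operator $\widehat{S}_{h}\colon L^{2}(\Omega)\to U_{h,D}$ defined by $\mathcal{B}(\widehat{S}_{h}(f),v_{h})=(f,v_{h})_{L^{2}(\Omega)}$ for all $v_{h}\in U_{h,D}$, which is well defined for $h$ small by \cref{ass:5-1} (cf.\ \cref{rem:5-1,rem:5-3}), and set $\eta(S_{h}(\Omega)):=\sup_{f\in L^{2}(\Omega)}\inf_{\varphi_{h}\in S_{h}(\Omega)}\|\widehat{S}_{h}(f)-\varphi_{h}\|_{\mathcal{A},k}\,/\,\|f\|_{L^{2}(\Omega)}$. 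Writing $e_{h}=u_{h}^{\mathdutchcal{e}}-u_{h}^{G}$ and expanding $\|e_{h}\|_{\mathcal{A},k}^{2}={\rm Re}\,\mathcal{B}(e_{h},e_{h})+2k^{2}(V^{2}e_{h},e_{h})_{L^{2}(\Omega)}$, I would use the discrete Galerkin orthogonality \cref{eq:5-12} and the continuity \cref{eq:1-5} to replace $u_{h}^{G}$ by an arbitrary $\varphi_{h}\in u_{h}^{p}+S_{h}(\Omega)$, and then estimate $\|e_{h}\|_{L^{2}(\Omega)}$ by testing the discrete adjoint problem $\mathcal{B}(v_{h},w_{h}^{\mathdutchcal{e}})=(v_{h},e_{h})_{L^{2}(\Omega)}$ (admissible since $e_{h}\in U_{h,D}$) with $v_{h}=e_{h}$ and noting $\overline{w_{h}^{\mathdutchcal{e}}}=\widehat{S}_{h}(\overline{e_{h}})$ as in \cref{eq:4-10-1}; this reproduces $\|e_{h}\|_{L^{2}(\Omega)}\le C_{\mathcal{B}}\,\eta(S_{h}(\Omega))\,\|e_{h}\|_{\mathcal{A},k}$ verbatim as in \cref{eq:4-11,eq:4-12}. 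Substituting and invoking $2C_{\mathcal{B}}kV_{\rm max}\,\eta(S_{h}(\Omega))\le 1$ then yields \cref{eq:6-19}, exactly as \cref{eq:4-13} gives \cref{eq:4-7}.

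It remains to bound $\eta(S_{h}(\Omega))$, the discrete counterpart of \cref{lem:4-3}. For $f\in L^{2}(\Omega)$, on each $\omega_{i}^{\ast}$ I would solve the discrete local problem: find $\hat{\psi}_{h,i}\in U_{h,DI}(\omega_{i}^{\ast})$ with $\mathcal{B}_{\omega_{i}^{\ast}}(\hat{\psi}_{h,i},v_{h})=(f,v_{h})_{L^{2}(\omega_{i}^{\ast})}$ for all $v_{h}\in U_{h,DI}(\omega_{i}^{\ast})$. Since $U_{h,DI}(\omega_{i}^{\ast})\subset H^{1}_{DI}(\omega_{i}^{\ast})$, the second condition in \cref{eq:6-18} together with the Poincar\'{e} inequality \cref{eq:4-5-0} gives (as in \cref{eq:4-5-5}) coercivity of $\mathcal{B}_{\omega_{i}^{\ast}}$ on $U_{h,DI}(\omega_{i}^{\ast})$, whence $\|\hat{\psi}_{h,i}\|_{\mathcal{A},\omega_{i}^{\ast}}\le 2\Lambda\|f\|_{L^{2}(\omega_{i}^{\ast})}$ and $\|\hat{\psi}_{h,i}\|_{\mathcal{A},\omega_{i}^{\ast},k}\le 3\Lambda\|f\|_{L^{2}(\omega_{i}^{\ast})}$ with $\Lambda=C_{P}H^{\ast}_{\mathtt{max}}a_{\rm min}^{-1/2}$. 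Testing the defining relation of $\widehat{S}_{h}(f)$ against functions of $U_{h,DI}(\omega_{i}^{\ast})$ extended by zero to $\Omega$ shows $\widehat{S}_{h}(f)|_{\omega_{i}^{\ast}}-\hat{\psi}_{h,i}\in H_{h,\mathcal{B}}(\omega_{i}^{\ast})$, so by the discrete $n$-width characterization \cref{lem:5-1} (equivalently \cref{thm:5-1}) there is a $\varphi_{h,i}\in S_{h,n_{i}}(\omega_{i})$ with $\|I_{h}(\chi_{i}(\widehat{S}_{h}(f)-\hat{\psi}_{h,i}-\varphi_{h,i}))\|_{\mathcal{A},\omega_{i},k}\le d_{h,n_{i}}(\omega_{i},\omega_{i}^{\ast})\big(\|\widehat{S}_{h}(f)\|_{\mathcal{A},\omega_{i}^{\ast}}+2\Lambda\|f\|_{L^{2}(\omega_{i}^{\ast})}\big)$.

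Next, setting $\hat{u}_{h}^{p}=\sum_{i}I_{h}(\chi_{i}\hat{\psi}_{h,i})$ and $\varphi_{h}=\sum_{i}I_{h}(\chi_{i}\varphi_{h,i})\in S_{h}(\Omega)$, the identities $\sum_{i}\chi_{i}\equiv 1$ and $\sum_{i}I_{h}(\chi_{i}w_{h})=I_{h}(w_{h})=w_{h}$ for $w_{h}\in U_{h}$ give $\widehat{S}_{h}(f)-\hat{u}_{h}^{p}-\varphi_{h}=\sum_{i}I_{h}(\chi_{i}(\widehat{S}_{h}(f)-\hat{\psi}_{h,i}-\varphi_{h,i}))$, so the overlap conditions \cref{eq:2-0,eq:4-1-0} and the global stability \cref{eq:5-2-2} yield (as in \cref{eq:4-5-9}) $\|\widehat{S}_{h}(f)-\hat{u}_{h}^{p}-\varphi_{h}\|_{\mathcal{A},k}\le\sqrt{2\zeta\zeta^{\ast}}\,d_{h,\mathtt{max}}(\widetilde{C}_{\rm stab}(k)+2\Lambda)\|f\|_{L^{2}(\Omega)}$. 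For $\|\hat{u}_{h}^{p}\|_{\mathcal{A},k}$ I would use stability of $I_{h}$ (introducing the constant $C_{I}$ of \cref{eq:6-18}), the triangle inequality, \cref{eq:2-0-1}, the Poincar\'{e} inequality \cref{eq:4-5-0} to pass from $\|\hat{\psi}_{h,i}\|_{L^{2}(\omega_{i})}$ to $\|\hat{\psi}_{h,i}\|_{\mathcal{A},\omega_{i}^{\ast}}$, and the hypothesis ${\rm diam}(\omega_{i}^{\ast})\le 2\,{\rm diam}(\omega_{i})$, repeating \cref{eq:4-5-12,eq:rev-4-1,eq:rev-4-2,eq:4-5-13,eq:4-5-14} to obtain $\|\hat{u}_{h}^{p}\|_{\mathcal{A},k}\le 9C_{I}C_{\chi}C_{P}(a_{\rm max}/a_{\rm min})^{1/2}\sqrt{\zeta\zeta^{\ast}}\,\Lambda\|f\|_{L^{2}(\Omega)}$. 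Combining these gives $\eta(S_{h}(\Omega))\le\sqrt{\zeta\zeta^{\ast}}\big(\sqrt{2}\,d_{h,\mathtt{max}}(\widetilde{C}_{\rm stab}(k)+2\Lambda)+9C_{I}C_{\chi}C_{P}(a_{\rm max}/a_{\rm min})^{1/2}\Lambda\big)$, and with $\Xi=C_{\mathcal{B}}V_{\rm max}\sqrt{\zeta\zeta^{\ast}}$ the two conditions in \cref{eq:6-18} — using \cref{rem:5-1} to absorb $\widetilde{C}_{\rm stab}(k)$ by a multiple of $C_{\rm stab}(k)$ for $h$ small — force $2C_{\mathcal{B}}kV_{\rm max}\,\eta(S_{h}(\Omega))\le1$ by the same arithmetic as in the proof of \cref{cor:4-1}, which closes the argument via the first paragraph.

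The proof involves no new idea beyond \cref{sec-3,sec-4}; the points needing care are (i) checking that $\widehat{S}_{h}(f)|_{\omega_{i}^{\ast}}-\hat{\psi}_{h,i}$ lies in the \emph{discrete} generalized harmonic space $H_{h,\mathcal{B}}(\omega_{i}^{\ast})$ rather than in $H_{\mathcal{B}}(\omega_{i}^{\ast})$, which requires tracking the impedance term of $\mathcal{B}_{\omega_{i}^{\ast}}$ when extending interior test functions by zero; (ii) the nodal interpolant $I_{h}$ in the definition of $S_{h}(\Omega)$, which contributes the extra constant $C_{I}$ in the global estimate and must be handled through $\sum_{i}I_{h}(\chi_{i}w_{h})=w_{h}$; and (iii) the cosmetic mismatch between the discrete stability constant $\widetilde{C}_{\rm stab}(k)$ and the continuous $C_{\rm stab}(k)$ appearing in \cref{eq:6-18}, resolved via \cref{rem:5-1} for $h$ sufficiently small. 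None of these is a genuine obstacle, so the proof is essentially a careful transcription.
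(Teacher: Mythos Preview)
Your proposal is correct and follows essentially the same approach as the paper, which simply states that the result is obtained ``arguing as in the proof of \cref{thm:4-1,lem:4-3}''. The three points you flag for care---the membership $\widehat{S}_{h}(f)|_{\omega_{i}^{\ast}}-\hat{\psi}_{h,i}\in H_{h,\mathcal{B}}(\omega_{i}^{\ast})$, the appearance of $C_{I}$ from the interpolant, and the cosmetic replacement of $\widetilde{C}_{\rm stab}(k)$ by $C_{\rm stab}(k)$ via \cref{rem:5-1}---are exactly the discrete modifications the paper has in mind.
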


\subsection{Convergence of the eigenvalues} As we have seen in the preceding sections, the $n$-widths at the continuous and discrete levels are given by the square roots of the $(n+1)$-th eigenvalue of the continuous and discrete eigenproblems \cref{eq:2-15,eq:5-6}, respectively. In this subsection, we will prove the convergence of the discrete $n$-widths to the continuous ones as $h\rightarrow 0$ by proving the convergence of the eigenvalues. To simplify notation, we omit the subscript $i$ in the presentation. To start with, we recall the operators $P$ and $P_{h}$ defined in \cref{eq:2-12,eq:5-3-0}, and define $T=P^{\ast}P:H_{\mathcal{B}}(\omega^{\ast})\rightarrow H_{\mathcal{B}}(\omega^{\ast})$ such that for each $u\in H_{\mathcal{B}}(\omega^{\ast})$, $Tu\in H_{\mathcal{B}}(\omega^{\ast})$ satisfies
\begin{equation}\label{eq:7-1}
\mathcal{A}_{\omega^{\ast}}(Tu, v) = \mathcal{A}_{\omega,k}(\chi u, \chi v),\quad \forall v\in H_{\mathcal{B}}(\omega^{\ast}),
\end{equation}
and $T_{h}=P_{h}^{\ast}P_{h}: H_{h,\mathcal{B}}(\omega^{\ast})\rightarrow H_{h,\mathcal{B}}(\omega^{\ast})$ such that for each $u_{h}\in H_{h,\mathcal{B}}(\omega^{\ast})$, $T_{h}u_{h}\in H_{h,\mathcal{B}}(\omega^{\ast})$ satisfies
\begin{equation}\label{eq:7-2}
\mathcal{A}_{\omega^{\ast}}(T_{h}u_{h}, \cm{v_{h}}) = \mathcal{A}_{\omega,k}(I_{h}(\chi u_{h}), I_{h}(\chi v_{h})),\quad \forall v_{h}\in H_{h,\mathcal{B}}(\omega^{\ast}).
\end{equation}
$T$ and $T_{h}$ ($0< h\leq 1$) are self-adjoint, positive, and compact operators. The continuous and discrete eigenproblems \cref{eq:2-15,eq:5-6} can be formulated as the following spectral problems for the operators $T$ and $T_{h}$:
\begin{equation}\label{eq:7-3}
\begin{array}{lll}
{\displaystyle u_{j}\in H_{\mathcal{B}}(\omega^{\ast}), \quad Tu_{j} = \lambda_{j} u_{j},\quad j=1,2,\cdots,}\\[2mm]
{\displaystyle \lambda_{1}\geq \lambda_{2} \geq\cdots\lambda_{j}\geq\cdots,\quad \lambda_{j}>0,}
\end{array}
\end{equation}
and 
\begin{equation}\label{eq:7-4}
\begin{array}{lll}
{\displaystyle u_{h,j}\in H_{h,\mathcal{B}}(\omega^{\ast}), \quad T_{h}u_{h,j} = \lambda_{h,j} u_{h,j},\quad j=1,2,\cdots,}\\[2mm]
{\displaystyle \lambda_{h,1}\geq \lambda_{h,2} \geq\cdots\lambda_{h,j}\geq\cdots,\quad \lambda_{h,j}>0,}
\end{array}
\end{equation}
where the eigenvalues are repeated according to their multiplicities. In order to prove convergence of the discrete eigenvalues, we use an abstract theoretical framework developed in \cite{jikov2012homogenization} which requires some assumptions on the associated operators and function spaces.
\begin{assumption}\label{ass:7-1}
The operators $T$, $T_{h}$ and the spaces $H_{\mathcal{B}}(\omega^{\ast})$, $H_{h,\mathcal{B}}(\omega^{\ast})$ satisfy the following conditions:
\begin{itemize}
\item[$\mathbf{A1.}$] \emph{There exist continuous linear operators $R_{h}:H_{\mathcal{B}}(\omega^{\ast})\rightarrow H_{h,\mathcal{B}}(\omega^{\ast})$ satisfying
\begin{equation}\label{eq:7-5}
\Vert R_{h}u\Vert_{\mathcal{A},\omega^{\ast}}\leq c_{0} \Vert u\Vert_{\mathcal{A},\omega^{\ast}}, \quad \forall u\in H_{\mathcal{B}}(\omega^{\ast}),
\end{equation}
where the constant $c_{0}$ is independent of $h$; moreover, for any $u,\, v\in H_{\mathcal{B}}(\omega^{\ast})$ and $u_{h},\, v_{h}\in H_{h,\mathcal{B}}(\omega^{\ast})$ satisfying 
\begin{equation}\label{eq:7-6}
\lim_{h\rightarrow 0}\Vert u_{h} - R_{h}u\Vert_{\mathcal{A},\omega^{\ast}} = 0,\quad \lim_{h\rightarrow 0}\Vert v_{h} - R_{h}v\Vert_{\mathcal{A},\omega^{\ast}} = 0,
\vspace{-1ex}
\end{equation}
it holds that}
\vspace{-1ex}
\begin{equation}\label{eq:7-7}
\lim_{h\rightarrow0}\mathcal{A}_{\omega^{\ast}}(u_{h}, v_{h}) = \mathcal{A}_{\omega^{\ast}}(u, v).
\end{equation}

\item[$\mathbf{A2.}$] \emph{The operators $T_{h}$ and $T$ are self-adjoint, positive, and compact, and the norms $\Vert T_{h}\Vert=\Vert T_{h}\Vert_{\mathcal{L}(H_{h,\mathcal{B}}(\omega^{\ast}))}$ are uniformly bounded with respect to $h$.}

\vspace{2ex}
\item[$\mathbf{A3.}$] \emph{If $\psi_{h}\in H_{h,\mathcal{B}}(\omega^{\ast})$, $\psi\in H_{\mathcal{B}}(\omega^{\ast})$ and 
\begin{equation}\label{eq:7-8}
\lim_{h\rightarrow 0}\Vert \psi_{h} - R_{h}\psi \Vert_{\mathcal{A},\omega^{\ast}} = 0,
\vspace{-1ex}
\end{equation}
then}
\vspace{-1ex}
\begin{equation}\label{eq:7-9}
\lim_{h\rightarrow 0}\Vert T_{h}\psi_{h} - R_{h}T\psi \Vert_{\mathcal{A},\omega^{\ast}} = 0.
\end{equation}

\item[$\mathbf{A4.}$] \emph{For any sequence $\psi_{h}\in H_{h,\mathcal{B}}(\omega^{\ast})$ with $\sup_{h\in (0,1]}\Vert \psi_{h} \Vert_{\mathcal{A},\omega^{\ast}}<\infty$, there exists a subsequence $\psi_{h^{\prime}}$ and a function $u\in H_{\mathcal{B}}(\omega^{\ast})$ such that 
\begin{equation}\label{eq:7-10}
\Vert T_{h^{\prime}}\psi_{h^{\prime}} - R_{h^{\prime}}u \Vert_{\mathcal{A},\omega^{\ast}}\rightarrow 0\quad as \quad h^{\prime}\rightarrow 0.
\end{equation}
}
\end{itemize}
\end{assumption}

By \cite[Lemma 11.3 \,\&\, Theorem 11.4]{jikov2012homogenization}, the following theorem holds true.
\begin{theorem}\label{thm:7-1}
Let $\{\lambda_{j}\}$ and $\{\lambda_{h,j}\}$ be the eigenvalues of problems \cref{eq:7-3,eq:7-4}, respectively. Assume that \cref{ass:7-1} holds true. Then, for each $j=1,2,\cdots$, $\lambda_{h,j}\rightarrow\lambda_{j}$ as $h\rightarrow 0$. Moreover, for sufficiently small $h$,
\begin{equation}\label{eq:7-12}
|\lambda_{h,j} - \lambda_{j}|\leq 2\sup_{u\in N(\lambda_{j}, T)}\Vert T_{h}R_{h} u - R_{h}Tu\Vert_{\mathcal{A},\omega^{\ast}},\quad j=1,2,\cdots,
\vspace{-1ex}
\end{equation}
where $N(\lambda_{j}, T)$ is the normalized eigenspace of $T$ corresponding to the eigenvalue $\lambda_{j}$:
\begin{equation}\label{eq:7-13}
N(\lambda_{j}, T) = \big\{u\in H_{\mathcal{B}}(\omega^{\ast})\,:\, \Vert u\Vert_{\mathcal{A},\omega^{\ast}}=1,\;\; Tu = \lambda_{j} u \big\}.
\end{equation}
\end{theorem}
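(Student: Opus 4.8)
The plan is to recognize the statement as a direct instance of the abstract spectral-convergence theory for families of positive, self-adjoint, compact operators acting on a scale of Hilbert spaces linked by connecting maps, as developed in \cite[Chapter 11]{jikov2012homogenization}. Since by \cref{lem:2-1,lem:5-1} the continuous and discrete $n$-widths are $\lambda_{n+1}^{1/2}$ and $\lambda_{h,n+1}^{1/2}$, it suffices to prove convergence of the eigenvalues themselves. The setup matches that theory exactly: the ``discrete'' space $H_{h,\mathcal{B}}(\omega^{\ast})$ and the ``limit'' space $H_{\mathcal{B}}(\omega^{\ast})$, both equipped with $\mathcal{A}_{\omega^{\ast}}(\cdot,\cdot)$, are connected by the operators $R_{h}$ of $\mathbf{A1}$, and one declares $u_{h}\in H_{h,\mathcal{B}}(\omega^{\ast})$ to converge to $u\in H_{\mathcal{B}}(\omega^{\ast})$ when $\Vert u_{h}-R_{h}u\Vert_{\mathcal{A},\omega^{\ast}}\to 0$. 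First I would record this dictionary and note that $T=P^{\ast}P$ and $T_{h}=P_{h}^{\ast}P_{h}$ from \cref{eq:7-1,eq:7-2} are the operators whose spectra are in play.

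With this dictionary, the four conditions in \cref{ass:7-1} are precisely the hypotheses of \cite[Lemma 11.3]{jikov2012homogenization}: $\mathbf{A2}$ makes $T$ and $T_{h}$ admissible and uniformly bounded; $\mathbf{A1}$ makes $R_{h}$ asymptotically compatible with the $\mathcal{A}_{\omega^{\ast}}$-inner product (in particular, taking $u_{h}=R_{h}u$, $v_{h}=R_{h}v$ in the convergence hypothesis yields $\Vert R_{h}u\Vert_{\mathcal{A},\omega^{\ast}}\to\Vert u\Vert_{\mathcal{A},\omega^{\ast}}$); $\mathbf{A3}$ is the strong convergence $T_{h}\psi_{h}\to T\psi$ whenever $\psi_{h}\to\psi$; and $\mathbf{A4}$ is the compactness of the family $\{T_{h}\}$, i.e. every bounded sequence has a subsequence along which $T_{h'}\psi_{h'}$ converges in the sense above. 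Invoking \cite[Theorem 11.4]{jikov2012homogenization} then gives $\lambda_{h,j}\to\lambda_{j}$ for each $j$, hence $d_{h,n}(\omega,\omega^{\ast})\to d_{n}(\omega,\omega^{\ast})$.

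For the quantitative bound \cref{eq:7-12} I would follow the proof of \cite[Theorem 11.4]{jikov2012homogenization}. The self-adjoint compact operators $T$ and $T_{h}$ admit min--max characterizations of their eigenvalues, and for $h$ small the spectral projection of $T_{h}$ onto the cluster of eigenvalues near $\lambda_{j}$ is close, via a resolvent estimate, to $R_{h}$ composed with the spectral projection of $T$ onto the eigenspace of $\lambda_{j}$; the discrepancy is controlled by $\sup_{u\in N(\lambda_{j},T)}\Vert T_{h}R_{h}u-R_{h}Tu\Vert_{\mathcal{A},\omega^{\ast}}$, and tracking the constants in the standard perturbation argument produces the factor $2$ in \cref{eq:7-12}. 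A point that requires care here is that $H_{h,\mathcal{B}}(\omega^{\ast})\not\subset H_{\mathcal{B}}(\omega^{\ast})$, so the non-conforming ``variational crime'' has to be absorbed into the $R_{h}$-framework rather than handled by a naive Galerkin projection.

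The genuine work lies not in this abstract invocation but in verifying \cref{ass:7-1}, which is where the analysis of the method enters, and that I regard as the main obstacle. The natural choice of $R_{h}$ maps a generalized harmonic function to a \emph{discrete} generalized harmonic one in an $\mathcal{A}_{\omega^{\ast}}$-stable way, built from the mixed formulation of \cref{lem:5-6} together with the stable projection $\Pi^{i}_{h}$ of \cref{lem:5-4}; checking $\mathbf{A1}$ and $\mathbf{A3}$ then reduces to showing that $I_{h}(\chi\,\cdot)$ converges to $\chi\,\cdot$ and that the discrete harmonic projections converge, which is exactly where the superapproximation estimates \cref{lem:5-2} and the uniform approximability of compact subsets \cref{lem:5-7} are used. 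Condition $\mathbf{A4}$ follows the same route as the compactness of $P$ and $P_{h}$: a bounded sequence in $H_{h,\mathcal{B}}(\omega^{\ast})$ is bounded in $H^{1}(\omega^{\ast})$, hence has an $L^{2}$-convergent subsequence, and multiplication by $\chi$ together with the compact embedding upgrades this to convergence of $T_{h'}\psi_{h'}$ in the $\mathcal{A}_{\omega^{\ast}}$-norm. These steps are routine in spirit but demand careful bookkeeping of the mismatch between the discrete and continuous generalized harmonic spaces.
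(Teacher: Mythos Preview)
Your approach is correct and is exactly what the paper does: the paper provides no self-contained proof of this theorem but simply states that it follows from \cite[Lemma~11.3 and Theorem~11.4]{jikov2012homogenization}, and your dictionary between the present setup and that abstract framework is accurate. Note, however, that your last two paragraphs address the \emph{verification} of conditions $\mathbf{A1}$--$\mathbf{A4}$, which is not part of \cref{thm:7-1} (those conditions are hypotheses here); that verification is the content of the separate \cref{thm:7-2}, where the paper takes $R_{h}=\Pi_{h}|_{H_{\mathcal{B}}(\omega^{\ast})}$ directly rather than constructing it via the mixed formulation.
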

\begin{rem}
\cm{We also have the following approximation result for the eigenvectors (see \cite[Theorem 11.5]{jikov2012homogenization}), which provides an error estimate for the approximation of the local spaces. Let $\{\lambda_{j}\}$ be the eigenvalues of problems \cref{eq:7-3}, and $N(\lambda_{j}, T)$ be as in \cref{eq:7-13}. Assume that $j\geq 1,\,s\geq 1$ are integers, and that
\begin{equation}
 \lambda_{j-1}>\lambda_{j}=\cdots=\lambda_{j+s-1}>\lambda_{j+s},   
\end{equation}
i.e., the multiplicity of the eigenvalue $\lambda_{j}$ is $s$. Then for any $u\in N(\lambda_{j}, T)$, there exists a linear combination $u_{h}$ of eigenvectors $u_{h,j},\ldots,u_{h,j+s-1}$ of problem \cref{eq:7-4} such that
\begin{equation}\label{eq:7-13-0}
\Vert u_{h} - R_{h}u  \Vert_{\mathcal{A},\omega^{\ast}} \leq C_{j}\Vert T_{h}R_{h} u - R_{h}Tu\Vert_{\mathcal{A},\omega^{\ast}},
\end{equation}
where $C_{j}$ is independent of $h$. The error in approximating the local spaces is roughly bounded by the sum of the right-hand term of \cref{eq:7-13-0} and the "projection" error.
}
\end{rem}

By \cref{thm:7-1}, in order to prove the convergence of the eigenvalues, it suffices to prove that conditions ${\rm A1}\,$--$\,{\rm A4}$ are satisfied. In fact, we have
\begin{theorem}\label{thm:7-2}
Under \cref{ass:5-3}, conditions ${\rm A1}\,$--$\,{\rm A4}$ are satisfied.
\end{theorem}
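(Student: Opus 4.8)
The plan is to take the connecting operators $R_{h}$ to be the elliptic-type projections of \cref{eq:5-22} (with the subscript $i$ suppressed), i.e. $R_{h}:=\Pi_{h}|_{H_{\mathcal{B}}(\omega^{\ast})}$, and then to verify conditions A1--A4 in turn. The first observation is that $R_{h}$ really does map $H_{\mathcal{B}}(\omega^{\ast})$ into $H_{h,\mathcal{B}}(\omega^{\ast})$: if $u\in H_{\mathcal{B}}(\omega^{\ast})$ then $\mathcal{B}_{\omega^{\ast}}(u,v_{h})=0$ for every $v_{h}\in U_{h,DI}(\omega^{\ast})\subset H^{1}_{DI}(\omega^{\ast})$, and by the definition \cref{eq:5-22} the same identity holds with $u$ replaced by $\Pi_{h}u=R_{h}u$, so $R_{h}u\in H_{h,\mathcal{B}}(\omega^{\ast})$. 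With this, A1 follows from \cref{lem:5-4}: the bound \cref{eq:7-5} (with $c_{0}$ depending on the fixed $k$ but not on $h$) comes from the stability estimate in \cref{eq:5-25} together with the $k$-dependent equivalence of $\Vert\cdot\Vert_{\mathcal{A},\omega^{\ast}}$ and $\Vert\cdot\Vert_{\mathcal{A},\omega^{\ast},k}$ on $H_{\mathcal{B}}(\omega^{\ast})$ (cf.\ \cref{eq:2-3-0}); and the consistency property \cref{eq:7-7} holds because $R_{h}u\to u$ in $H^{1}(\omega^{\ast})$ (second estimate in \cref{eq:5-25} plus density of $U_{h,D}(\omega^{\ast})$ in $H^{1}_{D}(\omega^{\ast})$), so the hypotheses \cref{eq:7-6}, using that $u_{h}-R_{h}u,\,v_{h}-R_{h}v\in H_{h,\mathcal{B}}(\omega^{\ast})$ together with the $h$-uniform norm equivalence \cref{eq:5-2-0}, force $u_{h}\to u$ and $v_{h}\to v$ in $H^{1}(\omega^{\ast})$, whence $\mathcal{A}_{\omega^{\ast}}(u_{h},v_{h})\to\mathcal{A}_{\omega^{\ast}}(u,v)$ by continuity of the form. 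Condition A2 is already recorded above except for the $h$-uniform bound $\Vert T_{h}\Vert=\Vert P_{h}\Vert_{\mathcal{L}}^{2}$, which is immediate from \cref{cor:6-1} (applied with $\eta=\chi$) and \cref{eq:5-2-0}, since $\Vert P_{h}u_{h}\Vert_{\mathcal{A},\omega,k}=\Vert I_{h}(\chi u_{h})\Vert_{\mathcal{A},\omega,k}\leq C(k)\Vert u_{h}\Vert_{L^{2}(\omega^{\ast})}\leq C(k)\Vert u_{h}\Vert_{\mathcal{A},\omega^{\ast}}$.

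For A3 I would argue by contradiction. Suppose $\psi_{h}\to R_{h}\psi$ in $\Vert\cdot\Vert_{\mathcal{A},\omega^{\ast}}$ but $\Vert T_{h}\psi_{h}-R_{h}T\psi\Vert_{\mathcal{A},\omega^{\ast}}\not\to0$, and pass to a subsequence on which this stays bounded below. Writing $w_{h}=T_{h}\psi_{h}$, $w=T\psi$ and $e_{h}=w_{h}-R_{h}w\in H_{h,\mathcal{B}}(\omega^{\ast})$, identity \cref{eq:7-2} gives $\Vert e_{h}\Vert_{\mathcal{A},\omega^{\ast}}^{2}=\mathcal{A}_{\omega,k}\big(I_{h}(\chi\psi_{h}),I_{h}(\chi e_{h})\big)-\mathcal{A}_{\omega^{\ast}}(R_{h}w,e_{h})$. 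By A2 and the boundedness of $\{\psi_{h}\}$, $\{e_{h}\}$ is bounded in $H^{1}(\omega^{\ast})$; along a further subsequence $e_{h}\rightharpoonup e$ in $H^{1}(\omega^{\ast})$, and passing to the limit in $\mathcal{B}_{\omega^{\ast}}(e_{h},v_{h})=0$ over $v_{h}\in U_{h,DI}(\omega^{\ast})$ (using density of $\bigcup_{h}U_{h,DI}(\omega^{\ast})$ in $H^{1}_{DI}(\omega^{\ast})$ and compactness of the trace operator for the impedance term) shows $e\in H_{\mathcal{B}}(\omega^{\ast})$. Since $\psi_{h}\to\psi$ strongly in $H^{1}(\omega^{\ast})$, the super-approximation estimate \cref{lem:5-2} (or its immediate variant for $\chi v_{h}$) gives $I_{h}(\chi\psi_{h})\to\chi\psi$ strongly in $H^{1}(\omega)$, while $R_{h}w\to w$ strongly in $H^{1}(\omega^{\ast})$; combining the strong$\times$weak limits yields $\Vert e_{h}\Vert_{\mathcal{A},\omega^{\ast}}^{2}\to\mathcal{A}_{\omega,k}(\chi\psi,\chi e)-\mathcal{A}_{\omega^{\ast}}(w,e)=0$, the last equality being \cref{eq:7-1} tested with $v=e$. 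This contradicts the lower bound and proves A3.

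Condition A4 is the crux, because there $\{\psi_{h}\}$ is only bounded in $H^{1}(\omega^{\ast})$, so $\{I_{h}(\chi\psi_{h})\}$ is a priori merely weakly precompact in $H^{1}(\omega)$ and the weak$\times$weak products in the energy identity above need not converge. The resolution is the discrete Caccioppoli inequality \cref{lem:6-1}: given a bounded sequence $\psi_{h}\in H_{h,\mathcal{B}}(\omega^{\ast})$, extract a subsequence with $\psi_{h'}\rightharpoonup\psi$ in $H^{1}(\omega^{\ast})$, so that $\psi_{h'}\to\psi$ in $L^{2}(\omega^{\ast})$ by Rellich and, exactly as in A3, $\psi\in H_{\mathcal{B}}(\omega^{\ast})$. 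Apply \cref{lem:6-1} to $\psi_{h'}-\Pi_{h'}\psi\in H_{h',\mathcal{B}}(\omega^{\ast})$ on $\omega\subset\omega^{\ast}$: since $\Vert\psi_{h'}-\Pi_{h'}\psi\Vert_{L^{2}(\omega^{\ast})}\to0$ we get $\Vert\psi_{h'}-\Pi_{h'}\psi\Vert_{\mathcal{A},\omega}\to0$, and because $\Pi_{h'}\psi\to\psi$ in $H^{1}(\omega^{\ast})$ this yields $\psi_{h'}\to\psi$ \emph{strongly} in $H^{1}(\omega)$; as $\mathrm{supp}\,\chi\subset\omega$, also $I_{h'}(\chi\psi_{h'})\to\chi\psi$ strongly in $H^{1}(\omega)$. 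With this strong convergence restored, the energy/weak-compactness argument of A3 applies verbatim and produces $u:=T\psi\in H_{\mathcal{B}}(\omega^{\ast})$ with $\Vert T_{h'}\psi_{h'}-R_{h'}u\Vert_{\mathcal{A},\omega^{\ast}}\to0$ along the subsequence, which is \cref{eq:7-10}. Throughout, the routine ingredients — density of $U_{h,D}(\omega^{\ast})$ and $U_{h,DI}(\omega^{\ast})$ in their continuous counterparts, the $h$-uniformity of the constant in \cref{eq:5-2-0}, the super-approximation/interpolation estimates for $I_{h}(\chi v_{h})$, and the weak continuity of multiplication by $\chi$ on $H^{1}$ — I would invoke without detailed verification; the one genuine obstacle is the step just isolated, namely upgrading weak to strong convergence of $I_{h}(\chi\psi_{h})$ for merely bounded $\psi_{h}$, which is precisely what \cref{lem:6-1} delivers.
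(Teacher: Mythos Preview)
Your proposal is correct, and for conditions A1, A2, and A4 it follows essentially the same path as the paper: the connecting operators are $R_h=\Pi_h|_{H_{\mathcal{B}}(\omega^{\ast})}$, the uniform bound on $T_h$ comes from the discrete Caccioppoli-type estimate, and in A4 the discrete Caccioppoli inequality \cref{lem:6-1} is precisely the tool used to upgrade weak to strong convergence of $\psi_{h'}$ on $\omega$, after which one sets $u=T\psi$ and argues as in the reference \cite{ma2021error}.

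The genuine difference is in A3. The paper splits
\[
\Vert T_h\psi_h-R_hT\psi\Vert_{\mathcal{A},\omega^{\ast}}\leq\Vert T_h(\psi_h-R_h\psi)\Vert+\Vert T_h(R_h\psi-\psi)\Vert+\Vert R_hT\psi-T\psi\Vert+\Vert T_h\psi-T\psi\Vert,
\]
so that the only nontrivial term is $\Vert T_h\psi-T\psi\Vert$ for a \emph{fixed} $\psi$. This is then handled in two steps: first an auxiliary operator $\widetilde{T}_h$ (with $\chi\psi$ in place of $I_h(\chi\psi)$) is shown to satisfy $\widetilde{T}_h\psi\to T\psi$ by casting the problem as a saddle point system and invoking \cref{lem:5-5,lem:5-6}; second, $\Vert\widetilde{T}_h\psi-T_h\psi\Vert\to 0$ is proved via a compactness argument, showing that $\{\chi e_h\}$ lies in a fixed compact set of $H^1_{DI}(\omega)$ and applying \cref{lem:5-7}. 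Your route bypasses both the saddle point machinery and \cref{lem:5-7}: you work directly with the energy identity for $e_h=T_h\psi_h-R_hT\psi$ and pass to the limit using strong$\times$weak convergence, exploiting that in A3 the hypothesis already gives $\psi_h\to\psi$ strongly in $H^1(\omega^{\ast})$. This is more economical and makes the parallel with A4 transparent, since the same energy identity is reused there. The paper's approach, on the other hand, is more modular (it reuses \cref{lem:5-5,lem:5-6}, which are needed anyway for the well-posedness of the practical mixed eigenproblem \cref{eq:5-9-1}) and places slightly weaker demands on the partition of unity: the compactness argument via \cref{lem:5-7} does not require a superapproximation estimate for $\chi v_h$ and hence works under the minimal regularity $\chi\in W^{1,\infty}$ assumed in \cref{eq:2-0-1}, whereas your step $I_h(\chi e_h)-\chi e_h\to 0$ in $H^1$ implicitly needs a variant of \cref{lem:5-2} for $\chi$ (rather than $\eta^2$), which typically asks for $\chi\in W^{2,\infty}$.
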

\begin{proof}
The verification of conditions ${\rm A1}\,$--$\,{\rm A4}$ shares some similarites with that for positive definite problems in \cite{ma2021error} and thus we omit some details of the proof which can be found in \cite{ma2021error}. We start with the verification of condition ${\rm A1}$. Let $h$ be sufficiently small. We define $R_{h} = \Pi_{h}|_{H_{\mathcal{B}}(\omega^{\ast})}$, where $\Pi_{h}$ is the projection defined in \cref{eq:5-22}. It follows from the definition of $\Pi_{h}$ that $R_{h}u\in H_{h,\mathcal{B}}(\omega^{\ast})$ for any $u\in H_{\mathcal{B}}(\omega^{\ast})$. By \cref{lem:5-4} and \cref{eq:2-3-0}, we see that \cref{eq:7-5} holds and that
\begin{equation}\label{eq:7-14}
\Vert R_{h}u - u\Vert_{\mathcal{A},\omega^{\ast}}\rightarrow 0 \quad {\rm as}\quad  h\rightarrow 0.
\end{equation}
Therefore, the first part of condition ${\rm A1}$ is proved and the other part can be proved by using \cref{eq:7-14} and the same technique used in the proof of Theorem 4.2 in \cite{ma2021error}. Condition ${\rm A2}$ follows directly from the definition of operators $T$ and $T_{h}$. To verify condition ${\rm A3}$, we first extend the definition of $T_{h}$ to all of $H_{D}^{1}(\omega^{\ast})$ and observe that
\begin{equation}\label{eq:7-15}
\begin{array}{lll}
{\displaystyle \Vert T_{h}\psi_{h} - R_{h}T\psi \Vert_{\mathcal{A},\omega^{\ast}} \leq \Vert T_{h}(\psi_{h} - R_{h}\psi) \Vert_{\mathcal{A},\omega^{\ast}} + \Vert T_{h}(R_{h}\psi - \psi) \Vert_{\mathcal{A},\omega^{\ast}} }\\[3mm]
{\displaystyle \qquad +\,\Vert R_{h}T\psi - T\psi \Vert_{\mathcal{A},\omega^{\ast}} +\Vert T_h\psi - T\psi \Vert_{\mathcal{A},\omega^{\ast}}.}
\end{array}
\end{equation}
The uniform boundedness of $\Vert T_{h} \Vert$, \cref{eq:7-8}, and \cref{eq:7-14} yield that
\begin{equation}\label{eq:7-16}
\lim_{h\rightarrow0}\big(\Vert T_{h}(\psi_{h} - R_{h}\psi) \Vert_{\mathcal{A},\omega^{\ast}} + \Vert T_{h}(R_{h}\psi - \psi) \Vert_{\mathcal{A},\omega^{\ast}} +\Vert R_{h}T\psi - T\psi \Vert_{\mathcal{A},\omega^{\ast}} \big) = 0.
\end{equation}
It remains to show that $\Vert T_h\psi - T\psi \Vert_{\mathcal{A},\omega^{\ast}}\rightarrow 0$ as $h\rightarrow 0$. To this end, we consider an auxiliary problem of finding $\widetilde{T}_{h}\psi\in H_{h,\mathcal{B}}(\omega^{\ast})$ such that
\begin{equation}\label{eq:7-17}
\mathcal{A}_{\omega^{\ast}}(\widetilde{T}_{h}\psi, v_{h}) = \mathcal{A}_{\omega,k}(\chi \psi, \chi v_{h}),\quad \forall v_{h}\in H_{h,\mathcal{B}}(\omega^{\ast}).
\end{equation}
Rewriting problems \cref{eq:7-1,eq:7-17} as saddle point problems as in \cite{ma2021error} and using \cref{lem:5-5,lem:5-6}, we see that
\begin{equation}\label{eq:7-18}
\Vert \tilde{T}_h\psi - T\psi \Vert_{\mathcal{A},\omega^{\ast}}\rightarrow 0\quad {\rm as}\quad  h\rightarrow 0.
\end{equation}
Next we will prove that $\Vert \tilde{T}_h\psi - T_{h}\psi \Vert_{\mathcal{A},\omega^{\ast}}\rightarrow 0$. Let $e_{h} = \tilde{T}_h\psi - T_{h}\psi$. Subtracting \cref{eq:7-2} from \cref{eq:7-17} and using \cref{eq:2-3-0}, we see that $e_{h}$ satisfies
\begin{equation}\label{eq:7-19}
\Vert e_{h}\Vert^{2}_{\mathcal{A},\omega^{\ast}}\leq C\Vert \chi\psi - I_{h}(\chi\psi)\Vert^{2}_{\mathcal{A},\omega} + C\Vert \chi e_{h} -I_{h}(\chi e_{h})\Vert_{\mathcal{A},\omega}.
\end{equation}
Moreover, there exists $C_{0}>0$ such that $\Vert e_{h}\Vert_{H^{1}(\omega^{\ast})}\leq C_{0}$ for all $h$. Given $C_{1}>0$, we define a subset of $H^{1}_{DI}(\omega^{\ast})$:
\begin{equation}\label{eq:7-20}
\mathcal{S} = \big\{\chi u\,:\,u\in H^{1}_{D}(\omega^{\ast}),\;\; \Vert u\Vert_{H^{1}(\omega^{\ast})}\leq C_{0},\;\; \Vert u\Vert_{\mathcal{A},\omega}\leq C_{1}\Vert u\Vert_{L^{2}(\omega^{\ast})} \big\}.
\end{equation}
Since $e_{h}\in H_{h,\mathcal{B}}(\omega^{\ast})$, the discrete Caccioppoli inequality \cref{eq:6-1} implies that there exists $C_{1}>0$ such that $\chi e_{h}\in \mathcal{S}$ for all $h$. Furthermore, it follows from \cref{eq:6-14} and Rellich's theorem that $\mathcal{S}$ is a compact subset in $H^{1}_{DI}(\omega)$. Hence, we can use \cref{lem:5-7} to assert that for any $\varepsilon>0$, there exists $h_{0}>0$, such that if $0<h<h_{0}$, there exists $v_{h}\in U_{h,DI}(\omega^{\ast})$ satisfying $\Vert \chi e_{h} - v_{h}\Vert_{H^{1}(\omega)}\leq C\varepsilon$, and thus
\begin{equation}\label{eq:7-21}
\Vert \chi e_{h} -I_{h}(\chi e_{h})\Vert_{\mathcal{A},\omega} = \Vert (\chi e_{h} -v_{h}) -I_{h}(\chi e_{h}-v_{h})\Vert_{\mathcal{A},\omega}\leq C\varepsilon,
\end{equation}
where we have used that fact that $I_{h}v_{h} = v_{h}$. Combining \cref{eq:7-19,eq:7-21} shows that $\Vert \tilde{T}_h\psi - T_{h}\psi \Vert_{\mathcal{A},\omega^{\ast}}\rightarrow 0$, which, together with \cref{eq:7-15,eq:7-16,eq:7-18}, give \cref{eq:7-9}.  Therefore, condition ${\rm A3}$ is verified.

Finally, we check the validity of condition ${\rm A4}$. Let $\{\psi_{h}\}$ be a sequence satisfying $\psi_{h}\in H_{h,\mathcal{B}}(\omega^{\ast})$ for each $h\in (0,1]$ and $\sup_{h\in (0,1]}\Vert \psi_{h} \Vert_{\mathcal{A},\omega^{\ast}}<\infty$. By \cref{eq:5-2-0}, we see that $\sup_{h\in (0,1]}\Vert \psi_{h} \Vert_{\mathcal{A},\omega^{\ast},k}<\infty$. Therefore, we can extract a subsequence $\{\psi_{h^{\prime}}\}$ such that $\{\psi_{h^{\prime}}\}$ converges weakly (in $ H^{1}_{D}(\omega^{\ast})$) to some $\psi\in H^{1}_{D}(\omega^{\ast})$. Applying a similar argument as in the proof of \cref{eq:5-2-0} yields that $\psi\in H_{\mathcal{B}}(\omega^{\ast})$. 
Define $u=T\psi\in H_{\mathcal{B}}(\omega^{\ast})$. It can be proved that $u$ satisfies \cref{eq:7-10} by the same argument as in the proof of Theorem 4.2 in \cite{ma2021error} using the discrete Caccioppoli inequality. Therefore, conditions ${\rm A1}\,$--$\,{\rm A4}$ are verified.
\end{proof}
\section{Numerical experiments}\label{sec-6}
In this section, we provide some numerical results to support the theoretical analysis and to demonstrate the effectiveness of the method.

\subsection{Classical Helmholtz example}\label{sec:6-1}
First we consider \cref{eq:1-1} on the unit square $\Omega=(0,1)^2$ with $\Gamma_{R} = \partial \Omega$, $A({\bm x}) =I$, $V({\bm x}) = 1$, $\beta({\bm x}) =1$, and $f({\bm x})=0$. The boundary data $g$ is chosen such that the problem admits the plane-wave solution $u^{\mathdutchcal{e}} = \exp\big({\rm i}\vec{k} \cdot {\bm x}\big)$ with $\vec{k} = k(0.6,\,0.8)$.

The underlying fine FE mesh with mesh-size $h$ on $\Omega$ 
is based on a uniform Cartesian grid. To implement the MS-GFEM, we first split the domain into $M=m^{2}$ non-overlapping subdomains resolved by the mesh, and then extend each subdomain by 2 layers of fine mesh elements to create an overlapping decomposition $\{ \omega_{i}\}_{i=1}^{M}$ of $\Omega$. Each overlapping subdomain $\omega_{i}$ is further extended by $\ell$ layers of fine mesh elements to create an oversampling domain $\omega_{i}^{\ast}$ on which the local problems are solved, i.e., $\ell h$ denotes the oversampling size. We denote by $n_{loc}$ the number of eigenvectors selected in each subdomain for building the local approximation space. 

Let $u^{\mathdutchcal{e}}_{h}$ and $u_{h}^{G}$ be the standard FE approximation and the (discrete) MS-GFEM approximation of the problem, respectively. Denote by $\mathbf{error}^{\Large \mathdutchcal{e}}$ ($\mathbf{error}$) the relative error between $u_{h}^{G}$ and the exact solution $u^{\mathdutchcal{e}}$ (resp. $u_{h}^{e}$), i.e.,
\begin{equation}
\mathbf{error}^{\Large \mathdutchcal{e}} := \frac{\Vert u^{\mathdutchcal{e}} - u_{h}^{G}\Vert_{\mathcal{A},k}}{\Vert u^{\mathdutchcal{e}}\Vert_{\mathcal{A},k}},\quad \mathbf{error} := \frac{\Vert u^{\mathdutchcal{e}}_{h} - u_{h}^{G}\Vert_{\mathcal{A},k}}{\Vert u^{\mathdutchcal{e}}_{h}\Vert_{\mathcal{A},k}}.
\end{equation}

\begin{figure}\label{fig:6-1}
\begin{center}
\includegraphics[scale=0.44] {./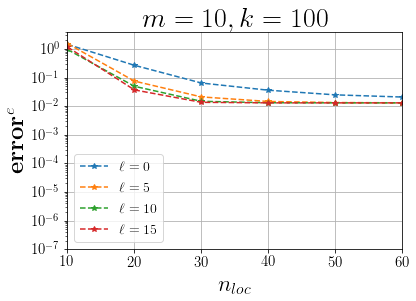}~\hspace{-1mm}
\includegraphics[scale=0.44] {./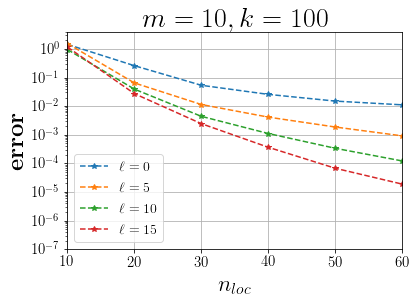}
\vspace{-4ex}
\caption{Classical Helmholtz example (subsection 6.1): Plots of $\mathbf{error}^{\mathdutchcal{e}}$ (left) and $\mathbf{error}$ (right) against $n_{loc}$ with $m=10$ and $k=100$. $\ell h$ denotes the oversampling size.}
\end{center}
\end{figure}

\begin{figure}\label{fig:6-2}
\begin{center}
\includegraphics[scale=0.44] {./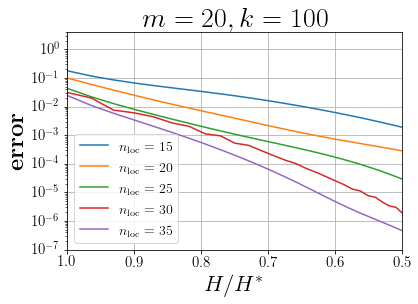}~\hspace{-1mm}
\includegraphics[scale=0.44] {./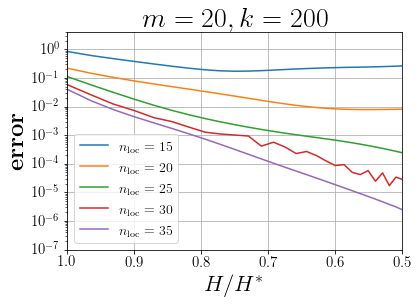}\vspace{-4ex}
\caption{Classical Helmholtz example (subsection 6.1): Plots of $\mathbf{error}$ against $H/H^{\ast}$ with $m=20$ for $k=100$ (left) and $k=200$ (right).}
\end{center}
\end{figure}

First we let $h=10^{-3}$ fixed and test our method for two wavenumbers, $k=100$ and $k=200$. In \cref{fig:6-1}, we show the decay of the errors with respect to the dimension of the local spaces for different oversampling sizes with $k=100$. \Cref{fig:6-1} (left) displays the errors between the discrete MS-GFEM approximations and the exact solution. The horizontal asymptote arises when $n_{loc}$ is sufficiently large such that the errors are dominated by the fine-scale FE approximation error. The errors between the discrete MS-GFEM approximations and the standard FE approximation are shown in \cref{fig:6-1} (right) and we can clearly see that they decay nearly exponentially with respect to $n_{loc}$, agreeing well with our theoretical analysis. In addition, it is visible that the method works even without oversampling, i.e., $\ell=0$. In \cref{fig:6-2}, we illustrate the decay of the errors with respect to $H/H^{\ast}$ (by varying the parameter $\ell$) for different dimensions of local spaces and different wavenumbers, where $H$ and $H^{\ast}$ represent the sizes of the subdomains and the oversampling domains, respectively. We can see that for $k=100$, the errors decay nearly exponentially with respect to $H/H^{\ast}$ for all different dimensions of local spaces and that for $k=200$ and a large $n_{loc}$, the errors decay similarly as in the $k=100$ case. However, for $k=200$ and a small $n_{loc}$ (15 or 20), the errors first decrease and then stagnate with increasing $H^{\ast}$. This verifies the presence of the resonance effect described in Remark~\ref{rem:3-4}.

\cm{Next we carry out a "frequency scaling" test by keeping $k^{3}h^2=1$, $H/h=0.054$, and $H/H^{\ast}=0.8$ fixed while increasing the wavenumber $k$. \Cref{fig:6-2-0} shows the errors of the method for different wavenumbers $k$,
which clearly confirm the robustness of our method with respect to a growing wavenumber.}

\begin{figure}\label{fig:6-2-0}
\begin{center}
\includegraphics[scale=0.41] {./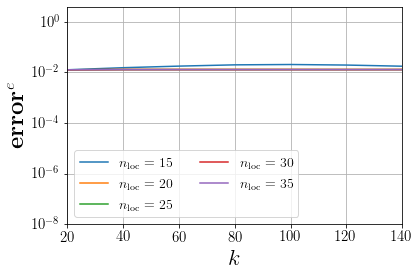}~\hspace{-1mm}
\includegraphics[scale=0.41] {./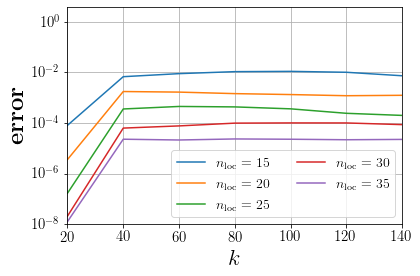}
\vspace{-2ex}
\caption{Classical Helmholtz example (subsection 6.1): Plots of $\mathbf{error}^{\mathdutchcal{e}}$ (left) and $\mathbf{error}$ (right) against $k$ with $k^{3}h^{2}=1$, $H/h=0.054$, and $H/H^{\ast}=0.8$.}
\end{center}
\end{figure}


\subsection{A scattering problem}\label{sec:6-2}
We consider the following heterogeneous scattering problem on the unit square $\Omega=(0,1)^2$ with $\Gamma_{R} = \partial \Omega$: $V({\bm x}) = 1$, $\beta({\bm x})=1$, $f({\bm x})=0$ and the coefficient $A({\bm x})$  as illustrated in \cref{fig:6-3} (left). The incident wave is taken as $u^{\rm inc} = \exp({\rm i}\vec{k}\cdot{\bm x})$ with $\vec{k} = k\big(\frac{1}{\sqrt{2}},\,-\frac{1}{\sqrt{2}}\big)$ and on $\Gamma_{R}$ we use first-order absorbing boundary conditions: $g = {\bm n}\cdot \nabla u^{\rm inc} -{\rm i}ku^{\rm inc}$.

\begin{figure}[t]
\label{fig:6-3}
\centering
\includegraphics[scale=0.19]{./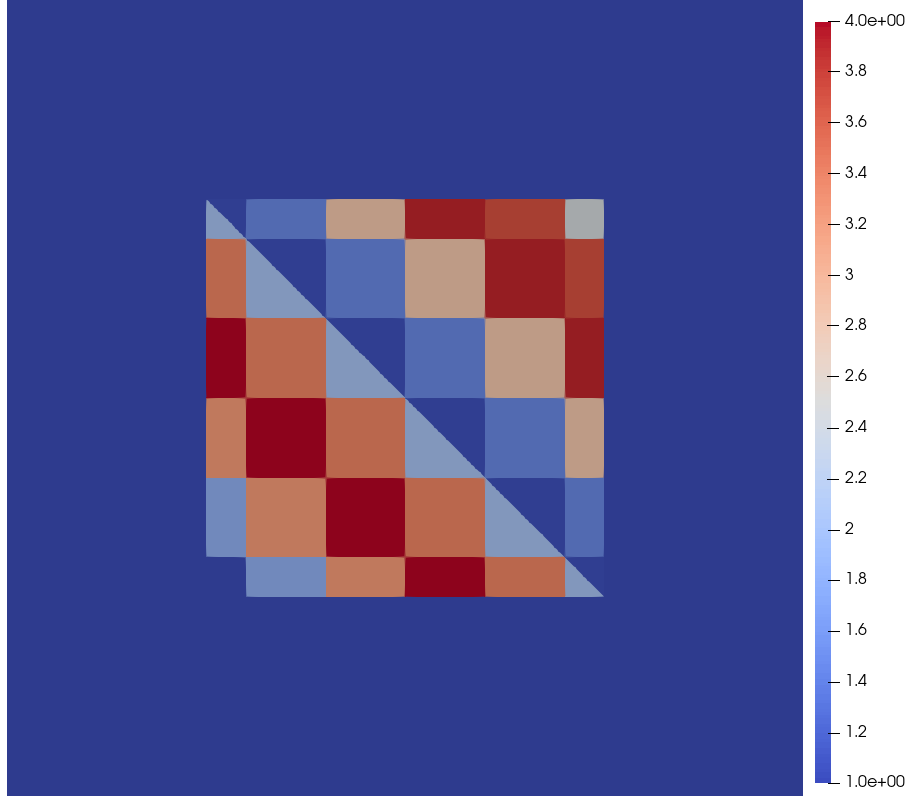}~\hspace{2ex}
\includegraphics[scale=0.19]{./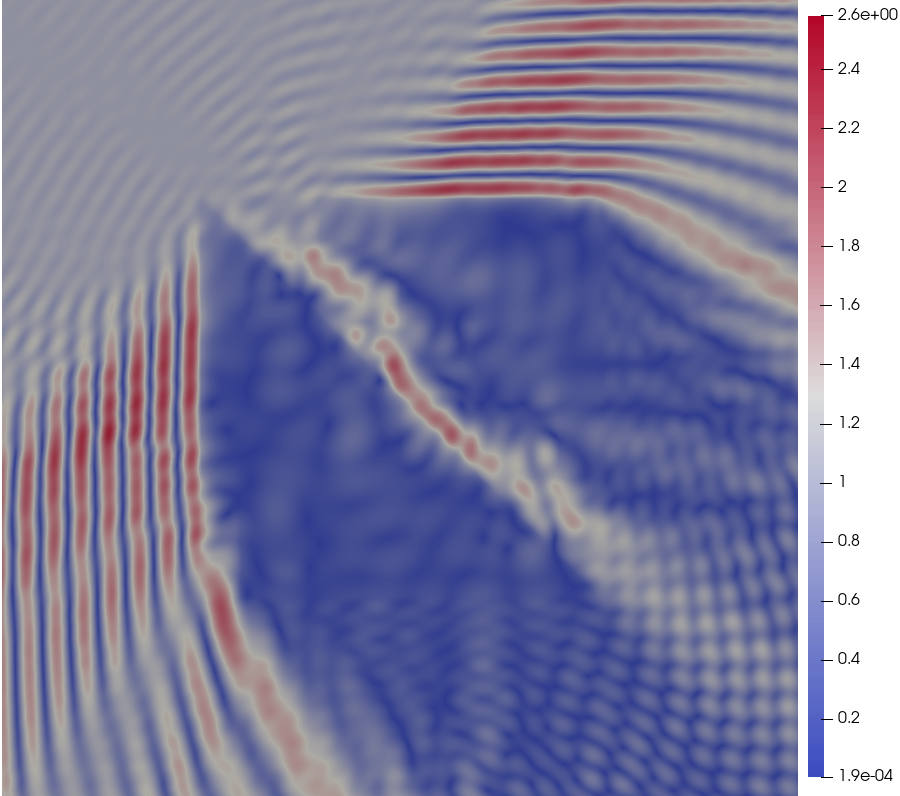}
\caption{Scattering problem (subsection 6.2): The coefficient $A({\bm x}) = a({\bm x})I$ (left) and the modulus of the standard FE solution $|u^{\mathdutchcal{e}}_{h}|$ with $ k=130$ (right).}
\end{figure}

The computational settings are similar to those in \cref{sec:6-1} except that the fine-scale FE mesh size is $h=1/1400$. The modulus of the fine-scale FE solution $u^{\mathdutchcal{e}}_{h}$ with $k=130$ is displayed in \cref{fig:6-3} (right). Since the exact solution of the problem is not available, $u^{\mathdutchcal{e}}_{h}$ is used as the reference solution for computing the errors. First we compare the decay rates of the errors with respect to $n_{loc}$ for $k=130$ and $k=260$ in \cref{fig:6-4}. It can be observed that with a fixed oversampling size, the decay rates with respect to $n_{loc}$ are roughly the same for two different wavenumbers, which confirms the theoretical analysis. In addition, as shown in \cref{fig:6-5} (left), we see that for $k=130$, the errors decay nearly exponentially with respect to $H/H^{\ast}$ and no stagnation phenomenon is observed. Finally, we illustrate the decay of the errors with respect to $m$ (the number of subdomains in one direction) for a fixed oversampling size in \cref{fig:6-5} (right). One can observe that the errors generally decay dramatically with increasing $m$. We note that in this case, the quantity $H/H^{\ast}$ decreases with increasing $m$ since the parameter $\ell$ is fixed.

\begin{figure}[t]
\label{fig:6-4}
\begin{center}
\includegraphics[scale=0.44] {./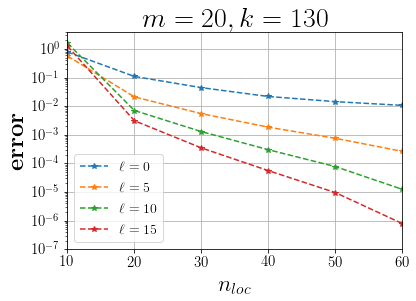}~\hspace{-1mm}
\includegraphics[scale=0.44] {./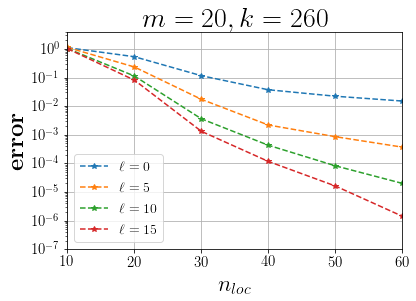}\vspace{-4ex}
\caption{Scattering problem (subsection 6.2): Plots of $\mathbf{error}$ against $n_{loc}$ with $m=20$ for $k=130$ (left) and $k=260$ (right).}
\end{center}
\end{figure}

\begin{figure}[!htbp]\label{fig:6-5}
\begin{center}
\includegraphics[scale=0.44] {./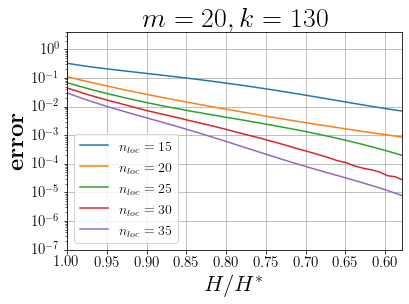}~\hspace{-1mm}
\includegraphics[scale=0.44] {./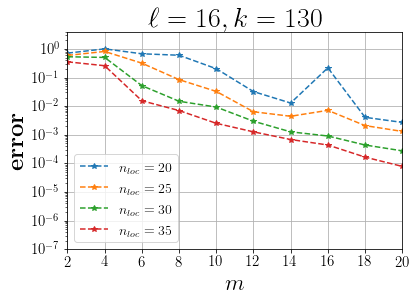}\vspace{-2ex}
\caption{Scattering problem (subsection 6.2): Plots of $\mathbf{error}$ against $H/H^{\ast}$ (left) and against $m$ (right) with $k=130$.}
\end{center}
\end{figure}

\subsection{Marmousi problem}\label{sec:6-3}
In the last example, we consider the benchmark Marmousi model \cite{versteeg1994marmousi}. The problem is posed on the domain $(0, \,9\,{\rm km})\times (-3\,{\rm km}, 0)$ and a point source is placed near the top boundary. Homogeneous Dirichlet and impedance boundary conditions are prescribed on the top surface and on the remaining three sides, respectively. Moreover, $A({\bm x})=I$, $\beta({\bm x})=kV({\bm x})$, and the velocity field $1/V({\bm x})$ is depicted in \cref{fig:6-6} (top).

\begin{figure}\label{fig:6-6}
\centering
\includegraphics[scale=0.28]{./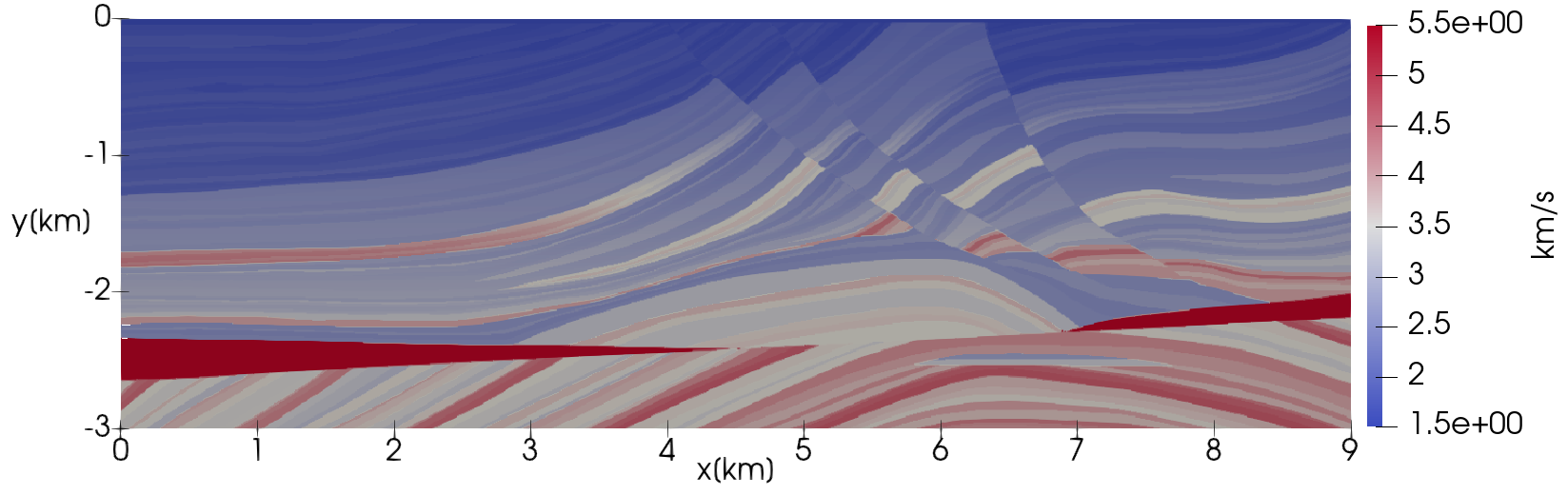}

\includegraphics[scale=0.28]{./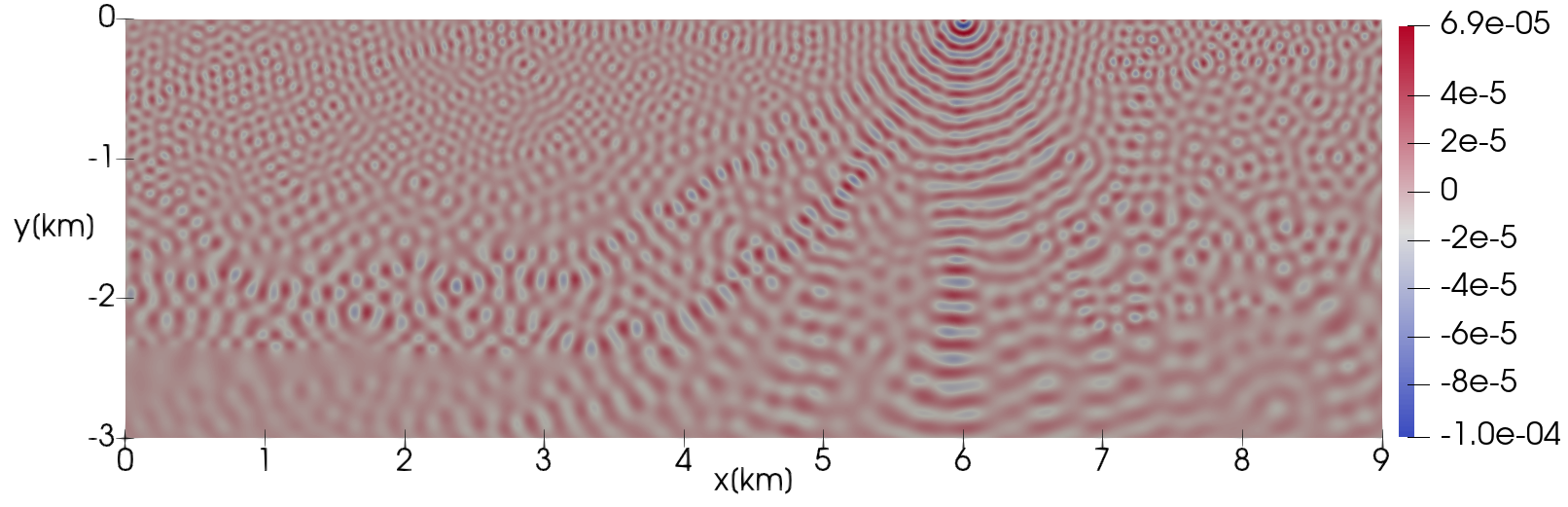}
\caption{Marmousi problem (subsection 6.3): The velocity field of the Marmousi model (top) and the real part of $u^{G}_{h}$ (bottom) computed with $m=16$, $\ell=9$, and $n_{loc}=30$.}
\end{figure}

The frequency for the test is taken as 20 Hz ($k=40\pi$) and we use 10 points per (minimal) wavelength for the FE discretization, corresponding to $h=7.5$m. In view of the geometry of the computational domain, we choose $m$ and $3m$ subdomains in the $y$ and $x$ directions, respectively. Otherwise, the computational setting is the same as in the previous subsections. The real part of the discrete MS-GFEM approximation computed using about 23000 local basis functions ($m=16$, $\ell=9$, $n_{loc}=30$) with a relative error less than $10^{-3}$ is plotted in \cref{fig:6-6} (bottom).

First we display the decay of the eigenvalues in three subdomains in a semi-logarithmic scale in \cref{fig:6-7} (left): one in the interior, one near the Dirichlet boundary and one near the impedance boundary. As predicted by the theoretical analysis, the eigenvalues in all the three subdomains decay nearly exponentially. Next we study the decay of the errors with respect to $n_{loc}$ and the results are shown in \cref{fig:6-7} (right). We see that for this realistic model with a heterogeneous velocity field, the errors of our method decay nearly exponentially with increasing $n_{loc}$ just as we have observed in the previous examples. Finally, we show the influence of the size of the subdomains by plotting the decay of the errors with respect to $H/H^{\ast}$ in \cref{fig:6-8}. It can be clearly seen that decreasing the size of the subdomains (increasing $m$) can significantly alleviate the stagnation of the errors with increasing oversampling size for a small $n_{loc}$. In fact, as noted in Remark~\ref{rem:3-4}, if $H\sim H^{\ast}\sim O(k^{-1})$, the method for Helmholtz problems behaves similarly to that for positive definite problems and the resonance effect disappears.

\begin{figure}\label{fig:6-7}
\begin{center}
\includegraphics[scale=0.44] {./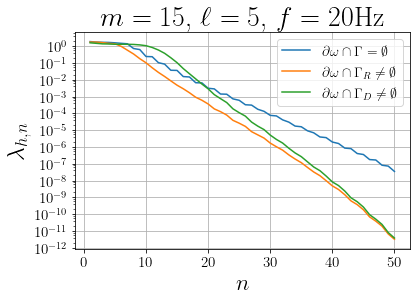}~\hspace{-1mm}
\includegraphics[scale=0.44] {./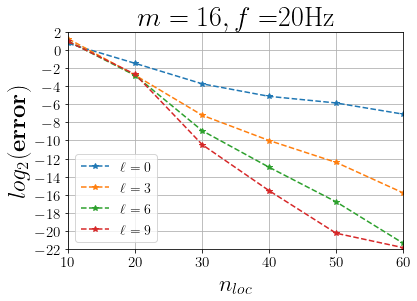}\vspace{-4ex}
\caption{Marmousi problem (subsection 6.3): Plot of $\lambda_{h,n}$ against $n$ (left); plot of $\mathbf{error}$ against $n_{loc}$ (right).}
\end{center}
\end{figure}

\begin{figure}\label{fig:6-8}
\begin{center}
\includegraphics[scale=0.44] {./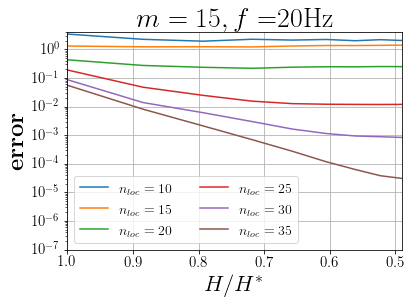}~\hspace{-1mm}
\includegraphics[scale=0.44] {./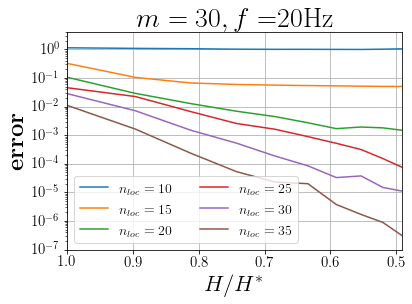}\vspace{-2ex}
\caption{Marmousi problem (subsection 6.3): Plots of $\mathbf{error}$ against $H/H^{\ast}$ for $m=15$ (left) and $m=30$ (right).}
\end{center}
\end{figure}

\section{Summary and future work}
We have performed a systematic investigation of a multiscale spectral GFEM with novel local approximation spaces for heterogeneous Helmholtz problems at the continuous and discrete level, providing a comprehensive analysis. Wavenumber explicit error estimates for the local and global approximations are obtained, and the influence of the wavenumber on the error is theoretically and numerically investigated. This goes well beyond most previous studies. Furthermore, our method provides a unified mathematical framework within which Trefftz-type discretization schemes for heterogeneous Helmholtz problems and efficient solvers for discrete Helmholtz problems can be developed and analysed.

There are two important issues of the method that remain to be addressed. The first is adaptivity. The new formalism readily facilitates an adaptive choice of the mesh size in the discretization of the local problems and of the number of eigenvectors to be included in each of the local approximation spaces. The second issue we aim to address is a discontinuous formulation. In the MS-GFEM, the local basis functions are pasted together by a partition of unity to form the continuous trial and test functions. An alternative is to use a discontinuous formulation in which the continuity of the numerical solution across neighbouring subdomains is maintained in a weak sense, just as in the ultraweak variational formulation \cite{cessenat1998application} or in the discontinuous enrichment method \cite{farhat2003discontinuous}. In this setting, we then expect the resulting linear system of the coarse problem to be better conditioned. Moreover, by combining similar local approximation spaces as in this paper with a least-squares method \cite{monk1999least}, we may expect to obtain a coercive formulation and thus get rid of the resolution conditions required for quasi optimality of the method.

\appendix
\section{Proof of \cref{thm:2-1}}
\label{sec:A.3}
For any $u,v\in H^{1}_{D}(\omega_{i}^{\ast})$, a direct calculation shows that
\begin{equation}\label{eq:2-7}
\begin{array}{lll}
{\displaystyle \int_{\omega_{i}^{\ast}}A\nabla (\eta u) \cdot \nabla (\eta\overline{v})\,d{\bm x}  = \int_{\omega_{i}^{\ast}}(A\nabla \eta \cdot \nabla \eta) u\overline{v}\,d{\bm x} - \int_{\omega_{i}^{\ast}}(A\nabla u \cdot \nabla \eta) \eta \overline{v}\,d{\bm x}}\\[4mm]
{\displaystyle \qquad \qquad +\,\int_{\omega_{i}^{\ast}}(A\nabla \eta \cdot \nabla \overline{v}) \eta u\,d{\bm x} + \int_{\omega_{i}^{\ast}}A\nabla u \cdot \nabla (\eta^{2}\overline{v})\,d{\bm x}.}
\end{array}
\end{equation}
Exchanging $u$ and $\overline{v}$ in \cref{eq:2-7}, it follows that
\begin{equation}\label{eq:2-8}
\begin{array}{lll}
{\displaystyle \int_{\omega_{i}^{\ast}}A\nabla (\eta \overline{v}) \cdot \nabla (\eta u)\,d{\bm x}  = \int_{\omega_{i}^{\ast}}(A\nabla \eta \cdot \nabla \eta) u\overline{v}\,d{\bm x} - \int_{\omega_{i}^{\ast}}(A\nabla \overline{v} \cdot \nabla \eta) \eta u\,d{\bm x}}\\[4mm]
{\displaystyle \qquad \qquad +\,\int_{\omega_{i}^{\ast}}(A\nabla \eta \cdot \nabla u) \eta \overline{v}\,d{\bm x} + \int_{\omega_{i}^{\ast}}A\nabla \overline{v} \cdot \nabla (\eta^{2}u)\,d{\bm x}.}
\end{array}
\end{equation}
Adding \cref{eq:2-7,eq:2-8} together and using the symmetry of $A$, we get
\begin{equation}\label{eq:2-9}
\begin{array}{lll}
{\displaystyle \int_{\omega_{i}^{\ast}}A\nabla (\eta u) \cdot \nabla (\eta\overline{v})\,d{\bm x}  = \int_{\omega_{i}^{\ast}}(A\nabla \eta \cdot \nabla \eta) u\overline{v}\,d{\bm x} }\\[4mm]
{\displaystyle  +\,\frac{1}{2}\,\Big(\int_{\omega_{i}^{\ast}}A\nabla u \cdot \nabla (\eta^{2}\overline{v})\,d{\bm x} +\int_{\omega_{i}^{\ast}}A\nabla \overline{v} \cdot \nabla (\eta^{2}u)\,d{\bm x}\Big).}
\end{array}
\end{equation}
By the assumptions on $\eta$, we see that for any $u,v\in H^{1}_{D}(\omega_{i}^{\ast})$, $\eta^{2}u, \eta^{2}v\in H_{DI}^{1}(\omega_{i}^{\ast})$. If, in addition, $u,v\in H_{\mathcal{B}}(\omega_{i}^{\ast})$, then we have 
\begin{equation}\label{eq:2-10}
\mathcal{B}_{\omega_{i}^{\ast}}(u,\eta^{2} v) = 0,\quad \mathcal{B}_{\omega_{i}^{\ast}}(v,\eta^{2}u) = 0.
\end{equation}
Therefore, 
\begin{equation}\label{eq:2-11}
\begin{array}{lll}
{\displaystyle \int_{\omega_{i}^{\ast}}A\nabla u \cdot \nabla (\eta^{2}\overline{v})\,d{\bm x} = k^{2}\int_{\omega_{i}^{\ast}}\eta^{2}V^{2}u\overline{v}\,d{\bm x} + {\rm i}k\int_{\partial \omega^{\ast}_{i}\cap \Gamma_{R}} \eta^{2}\beta u\overline{v}\,d{\bm s}, }\\[4mm]
{\displaystyle \int_{\omega_{i}^{\ast}}A\nabla \overline{v} \cdot \nabla (\eta^{2}u)\,d{\bm x} =  k^{2}\int_{\omega_{i}^{\ast}}\eta^{2}V^{2}u\overline{v}\,d{\bm x} - {\rm i}k\int_{\partial \omega^{\ast}_{i}\cap \Gamma_{R}} \eta^{2}\beta u\overline{v}\,d{\bm s}.}
\end{array}
\end{equation}
Inserting \cref{eq:2-11} into \cref{eq:2-9} yields \cref{eq:2-5} and the inequality \cref{eq:2-6} follows.

\section{Proof of \cref{eq:5-2-0}}
\label{sec:A.4}
It is easy to see that for any $h>0$, the $H^{1}$ seminorm is a norm on $H_{h,\mathcal{B}}(\omega^{\ast}_{i})$. Since all norms are equivalent in a finite-dimensional vector space, there exist $C_{h}>0$ depending on $h$ such that
\begin{equation}\label{eq:B.1}
\Vert u_{h}\Vert_{L^{2}(\omega_{i}^{\ast})}\leq C_{h}\Vert \nabla u_{h}\Vert_{L^{2}(\omega_{i}^{\ast})},\quad \forall u_{h}\in H_{h,\mathcal{B}}(\omega^{\ast}_{i}).
\end{equation} 
It suffices to show that $\limsup_{h\rightarrow 0} C_{h} < + \infty$. If it doesn't hold, then there exists a sequence $\{u_{h_n}\}_{n=1}^{\infty}$ with $h_{n}\rightarrow 0$ as $n\rightarrow \infty$, such that $\Vert u_{h_{n}}\Vert_{L^{2}(\omega_{i}^{\ast})}=1$ and $\Vert \nabla u_{h_n}\Vert_{L^{2}(\omega_{i}^{\ast})}\leq 1/n$ hold for any $n\geq 1$. We can find a subsequence, still denoted by $\{u_{h_n}\}_{n=1}^{\infty}$, and a function $u_{0}\in H^{1}_{D}(\omega_{i}^{\ast})$, such that $u_{h_n}\rightharpoonup u_{0}$ weakly in $H^{1}_{D}(\omega_{i}^{\ast})$. Since $H^{1}(\omega_{i}^{\ast})$ is compactly embedded into $L^{2}(\omega_{i}^{\ast})$, we see that $u_{h_n}\rightarrow u_{0}$ strongly in $L^{2}(\omega_{i}^{\ast})$. It follows that $\Vert u_{0} \Vert_{L^{2}(\omega_{i}^{\ast})} =1$ and $\nabla u_{0} = 0$. Therefore, $u_{0}$ is a constant. Next, we will show that $u_{0}\in H_{\mathcal{B}}(\omega^{\ast}_{i})$. For any fixed $v\in H^{1}_{DI}(\omega_{i}^{\ast})$, 
\begin{equation}\label{eq:B.2}
\mathcal{B}_{\omega_{i}^{\ast}}(u_{0},v) = \mathcal{B}_{\omega_{i}^{\ast}}(u_{0}-u_{h_{n}},v) + \mathcal{B}_{\omega_{i}^{\ast}}(u_{h_{n}}, I_{h_{n}}v) + \mathcal{B}_{\omega_{i}^{\ast}}(u_{h_n}, v-I_{h_n}v),
\end{equation}
where $I_{h}$ is the standard Lagrange interpolation operator. Since $\{u_{h_{n}}\}$ converges weakly to $u_{0}$ in $ H^{1}_{D}(\omega_{i}^{\ast})$, we have 
\begin{equation}\label{eq:B.3}
\mathcal{A}_{\omega_{i}^{\ast},k}(u_{0}-u_{h_{n}},v)\rightarrow 0 \quad {\rm as} \quad n\rightarrow \infty.
\end{equation}
Using the compact embedding of $H^{1}(\omega_{i}^{\ast})$ into $L^{2}(\omega_{i}^{\ast})$ and $L^{2}(\partial\omega_{i}^{\ast})$, we see that
\begin{equation}\label{eq:B.4}
\Vert u_{0}-u_{h_{n}}\Vert_{L^{2}(\omega_{i}^{\ast})}\rightarrow 0,\quad \Vert u_{0}-u_{h_{n}}\Vert_{L^{2}(\partial \omega_{i}^{\ast})}\rightarrow 0 \quad {\rm as} \quad n\rightarrow \infty.
\end{equation}
Combining \cref{eq:B.3,eq:B.4} implies that $\mathcal{B}_{\omega_{i}^{\ast}}(u_{0}-u_{h_{n}},v)\rightarrow 0$ as $n\rightarrow \infty$. Moreover, since $u_{h_n}\in H_{h_{n},\mathcal{B}}(\omega_{i}^{\ast})$ and $I_{h_{n}}v\in U_{h_{n},DI}(\omega_{i}^{\ast})$, the second term $\mathcal{B}_{\omega_{i}^{\ast}}(u_{h_{n}}, I_{h_{n}}v)$ vanishes. Finally, in view of the boundedness of the sequence $\{u_{h_{n}}\}$, we conclude
\begin{equation}
|\mathcal{B}_{\omega_{i}^{\ast}}(u_{h_{n}}, v-I_{h_{n}}v)|\leq C\Vert v-I_{h_n}v\Vert_{\mathcal{A},\omega_{i}^{\ast},k}\rightarrow 0 \quad {\rm as} \quad n\rightarrow \infty.
\end{equation}
Making $n\rightarrow \infty$ in \cref{eq:B.2} yields that $\mathcal{B}_{\omega_{i}^{\ast}}(u_{0},v)=0$ for any $v\in H^{1}_{DI}(\omega_{i}^{\ast})$. Therefore, $u_{0}\in H_{\mathcal{B}}(\omega_{i}^{\ast})$. Since $u_{0}$ is a constant, we see that $u_{0}\equiv 0$, which contradicts with the fact that $\Vert u_{0} \Vert_{L^{2}(\omega_{i}^{\ast})} =1$.  

\bibliographystyle{siamplain}
\bibliography{references}
\end{document}